\date{}
\newtheorem{assumptionC}{C}
\newtheorem{theorem}{Theorem}
\newtheorem{remark}{Remark}
\theoremstyle{remark}
\begin{document}



\title{\textbf{A comparison of estimators of mean and its functions in finite populations}}
\author{Anurag Dey and Probal Chaudhuri\\
\textit{Indian Statistical Institute, Kolkata}}
\maketitle


\begin{abstract}
Several well known estimators of finite population mean and its functions are investigated under some standard sampling designs. Such functions of  mean  include the variance, the correlation coefficient and the regression coefficient in the population as special cases. We compare the performance of these estimators under different sampling designs based on their asymptotic distributions.  Equivalence classes of estimators under different sampling designs are constructed  so  that estimators in the same class have equivalent performance in terms of asymptotic mean squared errors (MSEs). Estimators in different equivalence classes are then compared under some superpopulations satisfying linear models.  It is shown that the pseudo empirical likelihood (PEML) estimator of the population mean under simple random sampling without replacement (SRSWOR) has the lowest asymptotic MSE among all the estimators under different sampling designs considered in this paper. It is also shown that for the variance, the correlation coefficient and the regression coefficient of the population, the  plug-in estimators  based on the PEML estimator have the lowest asymptotic MSEs among all the estimators considered in this paper under SRSWOR.  On the other hand,  for any high entropy $\pi$PS (HE$\pi$PS) sampling design, which uses the auxiliary information, the plug-in  estimators of those parameters based on the H\'ajek estimator have the lowest asymptotic MSEs among all the estimators considered in this paper.\\ 
\end{abstract}
\newpage

\textbf{Keywords and phrases:} Asymptotic normality, Equivalence classes of estimators, High entropy sampling designs, Inclusion probability, Linear regression model, Rejective sampling design, Relative efficiency, Superpopulation models.

\section{Introduction}\label{sec 1}
Suppose that $\mathcal{P}$=$\{1,2,\ldots,N\}$ is a finite population of size $N$, $s$ is a sample of size $n$ $(< N)$ from $\mathcal{P}$, and $\mathcal{S}$ is the collection of all possible samples having size $n$. Then, a sampling design $P(s)$ is a probability distribution on $\mathcal{S}$ such that $0 \leq P(s)\leq 1$ for all $s\in \mathcal{S}$ and $\sum_{s\in\mathcal{S}}P(s)$=$1$.  In this paper, simple random sampling without replacement (SRSWOR), Lahiri-Midzuno-Sen (LMS) sampling design (see \cite{Lahiri1951sampling}, \cite{Midzuno1952sampling} and \cite{Sen1953sampling}), Rao-Hartley-Cochran (RHC) sampling design (see \cite{rao1962simple}) and high entropy $\pi$PS (HE$\pi$PS) sampling designs (see Section \ref{sec 4}) are considered.  Note that all of the above sampling designs except SRSWOR are implemented utilizing some auxiliary variable.
\par

Let $(Y_i,X_i)$ be the value of $(y,x)$ for the $i^{th}$ population unit, $i$=$1,\ldots,N$, where $y$ is a univraite or multivariate study variable, and $x$ is a positive real valued size/auxiliary variable. Suppose that $\overline{Y}$=$\sum_{i=1}^N Y_i/N$ is the finite population mean of $y$. The Horvitz-Thompson (HT) estimator (see \cite{horvitz1952generalization})) and the RHC (see \cite{rao1962simple}) estimator are commonly used design unbiased estimators of $\overline{Y}$. Other well known estimators of $\overline{Y}$ are the H\'ajek estimator (see \cite{hajek1971}, \cite{sarndal2003model} and references therein),  the ratio estimator (see \cite{MR0474575}), the product estimator (see \cite{MR0474575}), the generalized regression (GREG) estimator (see \cite{MR1707846}) and the pseudo empirical likelihood (PEML) estimator (see \cite{MR1707846}). However, these latter estimators are not always design unbiased. For the expressions of the above estimators, the reader is referred to the Appendix. Now, suppose that $y$ is a $\mathbb{R}^d$-valued $(d\geq 1)$ study variable, and $g(\sum_{i=1}^N h(Y_i)/N)$ is a population parameter. Here, $h$: $ \mathbb{R}^d\rightarrow\mathbb{R}^p$ is a function with $p\geq 1$ and $g$: $ \mathbb{R}^p\rightarrow\mathbb{R}$ is a continuously differentiable function. All vectors in Euclidean spaces will be taken as row vectors and superscript $T$ will be used to denote their transpose. Examples of such a parameter are the variance, the correlation coefficient, the regression coefficient, etc. associated with a finite population. For simplicity, we will often write $h(Y_i)$ as $h_i$. Then, $g(\overline{h})$=$g(\sum_{i=1}^N h_i/N)$ is estimated by plugging in the estimator $\hat{\overline{h}}$ of $\overline{h}$. 
\par

In this article, our objective is to find asymptotically efficient (in terms of  mean squared error (MSE))  estimator of $g(\overline{h})$. In Section \ref{sec 4}, based on the asymptotic distribution of the estimator of $g(\overline{h})$ under above sampling designs, we construct equivalence classes of estimators such that any two estimators in the same class have the same asymptotic MSE. We consider the special case, when $g(\overline{h})$=$\overline{Y}$, and compare equivalence classes of estimators under superpopulations satisfying linear models  in Section \ref{subsec 4.1}.  Among different estimators under different sampling designs considered in this article, the PEML estimator of the population mean under SRSWOR turns out to be the estimator with the lowest asymptotic MSE.  Also, the PEML estimator has the same asymptotic MSE under SRSWOR and LMS sampling design.  Interestingly, we observe that the performance of the PEML estimator under RHC and any HE$\pi$PS sampling designs, which use auxiliary information is worse than its performance under SRSWOR. Earlier, it was shown that the GREG estimator is asymptotically at least as efficient as the HT, the ratio and the product estimators under SRSWOR  (see \cite{MR0474575}).  It will follow from our analysis that the PEML estimator is asymptotically equivalent to the GREG estimator under all the sampling designs considered in this paper.
\par

In  Section \ref{subsec 4.1},  we consider the cases, when $g(\overline{h})$ is the variance, the correlation coefficient and the regression coefficient in the population. Note that if the estimators of the population variance are constructed by plugging in the HT, the ratio, the product or the GREG estimators of the population means, then the estimators of the variance may become negative.  For this reason,  one also faces problem with the plug-in estimators of the correlation coefficient and the regression coefficient as these estimators require estimators of population variances. On the other hand, if the estimators of the above-mentioned parameters are constructed by plugging in the H\'ajek or the PEML estimators of the population means, such a problem does not occur. Therefore, for these parameters, we compare only those equivalence classes, which contain the  plug-in  estimators based on the H\'ajek and the PEML estimators. From this comparison  under superpopulations satisfying linear models,  we once again conclude that  for any of these parameters, the plug-in estimator based on the PEML estimator has asymptotically the lowest MSE among all the estimators considered in this article under SRSWOR  as well as LMS sampling design.  Moreover, under any HE$\pi$PS sampling design, which uses the auxiliary information, the plug-in estimator based on the H\'ajek estimator has asymptotically the lowest MSE among all the estimators considered in this article.  
\par

 \cite{scott1981asymptotic} proved that the ratio estimator has the same asymptotic distribution under SRSWOR and LMS sampling design. \cite{MR1707846} showed that the PEML estimator is asymptotically equivalent to the GREG estimator under some conditions on the sampling design, which are satisfied by SRSWOR and RHC sampling design. However, asymptotic equivalence classes as in Table \ref{table 1.2} in Section \ref{sec 4}, which consist of several estimators of a function of the population means under several sampling designs, were not constructed by any earlier author.
\par

 \cite{raj1954sampling} compared the sample mean under the simple random sampling with replacement with the usual unbiased estimator of the population mean under the probability proportional to size sampling with replacement, when the study variable and the size variable are exactly linearly related. \cite{avadhani1970comparison} compared the ratio estimator of the population mean under SRSWOR with the RHC estimator under RHC sampling design, when an approximate linear relationship holds between the study variable and the size variable. \cite{avadhani1972comparison} carried out the comparison of the ratio estimator of the population mean under LMS sampling design and the RHC estimator under RHC sampling design, when the study variable and the size variable are approximately linearly related. It was shown that the GREG estimator of the population mean is asymptotically at least as efficient as the HT, the ratio and the product estimators under SRSWOR (see \cite{MR0474575}). However, the above comparisons  included neither the PEML estimator nor HE$\pi$PS sampling designs. 
\par

 Some empirical studies carried out in Section \ref{subsec 3} using synthetic and real data demonstrate that the numerical and the theoretical results corroborate each other. We make some remarks on our major findings in Section \ref{sec 8}. Proofs of the results are given in the Appendix.
\section{Comparison of different estimators of $g(\overline{h})$}\label{sec 4}
In this section, we compare the estimators of $g(\overline{h})$, which are obtained by plugging in the estimators of $\overline{h}$ mentioned in Table \ref{table 1.1} below.
\begin{table}[h]
\caption{Estimators of $\overline{h}$}
\label{table 1.1}
\begin{center}
\begin{tabular}{|c|c|}
\hline 
\multirow{2}{*}{}Sampling& \multirow{2}{*}{Estimators} \\
designs & \\
\hline
\multirow{2}{*}{SRSWOR} & HT (which coincides with H\'ajek estimator), ratio,\\
&product, GREG and PEML estimators\\
\hline
\multirow{2}{*}{LMS} & HT, H\'ajek, ratio, product, GREG and\\
& PEML estimators\\
\hline
\multirow{2}{*}{HE$\pi$PS}& HT (which coincides with ratio and product\\
&estimators), H\'ajek, GREG and PEML estimators\\
\hline
RHC& RHC, GREG and PEML estimators\\
\hline
\end{tabular}
\end{center}
\end{table}
First, we find equivalence classes of estimators of $g(\overline{h})$ such that any two estimators in the same class are asymptotically normal with the same mean $g(\overline{h})$ and  the same  variance. 
\par

We define our asymptotic framework as follows. Let $\{\mathcal{P}_{\nu}\}$ be a sequence of nested populations with $N_\nu$, $n_\nu \rightarrow \infty$ as $\nu \rightarrow \infty$ ( see \cite{MR648029}, \cite{wang2011asymptotic}, \cite{conti2015inference}, \cite{MR3670194}, \cite{han2021complex} and references therein),  where $N_\nu$ and $n_{\nu}$ are, respectively, the population size and the sample size corresponding to the $\nu^{th}$ population. Henceforth, we shall suppress the subscript $\nu$ that tends to $\infty$ for the sake of simplicity. Throughout this paper, we consider the following condition (cf. Assumption $1$ in \cite{cardot2011horvitz}, A$4$ in \cite{conti2014estimation}, A$1$ in \cite{cardot2014variance} A$4$ in \cite{conti2015inference} and (HT$3$) in \cite{MR3670194}) 
\setcounter{assumptionC}{-1} 

\begin{assumptionC}\label{ass 4}
$n/N\rightarrow \lambda$ as $\nu\rightarrow\infty$, where $0\leq \lambda<1$.
\end{assumptionC}
Before we state the main results, let us discuss the HE$\pi$PS sampling design and some conditions on $\{(X_i,h_i):1\leq i\leq N\}$ (recall that $h_i$=$h(Y_i)$). A sampling design $P(s)$ satisfying the condition, $D(P||R)$= $\sum_{s \in \mathcal{S}}P(s)\log$ $(P(s)/R(s))\rightarrow0$ as $\nu\rightarrow\infty$, for some rejective sampling design (see \cite{MR0178555}) $R(s)$ is known as the high entropy sampling design ( see \cite{MR1624693}, \cite{conti2014estimation}, \cite{cardot2014variance}, \cite{MR3670194} and references therein).  A sampling design $P(s)$ is called the HE$\pi$PS sampling design if it is a high entropy sampling design, and its inclusion probabilities satisfy the condition $\pi_i$=$nX_i/\sum_{i=1}^N X_i$ for $i$=$1,\ldots,N$. An example of the HE$\pi$PS sampling design is the Rao-Sampford (RS) sampling design (see \cite{sampford1967sampling} and \cite{MR1624693}). We now state the following conditions. 
\begin{assumptionC}\label{ass 2}
$\{P_\nu\}$ is such that $\sum_{i=1}^N ||h_i||^4/N$=$O(1)$ and $\sum_{i=1}^N X_i^4/N$ =$O(1)$ as $\nu\rightarrow\infty$. Further, $\lim_{\nu\rightarrow\infty}\overline{h}$ exists, and $\overline{X}$=$\sum_{i=1}^N X_i/N$ and $S^2_x$= $\sum_{i=1}^N (X_i-\overline{X})^2/N$ are bounded away from $0$ as $\nu\rightarrow\infty$. Moreover, $\nabla g(\mu_0)\neq 0$, where $\mu_0$=$\lim_{\nu\rightarrow \infty}\overline{h}$ and $\nabla g$ is the gradient of $g$. 
\end{assumptionC}
\begin{assumptionC}\label{ass B2}
$\max_{1\leq i \leq N} X_{i}/\min_{1\leq i \leq N} X_{i}$=$O(1)$ as $\nu\rightarrow\infty$.
\end{assumptionC}
Let $\textbf{V}_i$ be one of $h_i$, $h_i-\overline{h}$, $h_i-\overline{h}X_i/\overline{X}$, $h_i+\overline{h}X_i/\overline{X}$ and $h_i-\overline{h}-S_{xh}(X_i-\overline{X})/S_x^2$ for $i$=$1,\ldots,N$, $\overline{h}$=$\sum_{i=1}^N h_i/N$ and $S_{xh}$=$\sum_{i=1}^N X_ih_i/N-\overline{h}$ $\overline{X}$. Define $\textbf{T}$=$\sum_{i=1}^N \textbf{V}_i(1-\pi_i)/\sum_{i=1}^N \pi_i(1-\pi_i)$, where $\pi_i$ is the inclusion probability of the $i^{th}$ population unit. Also, in the case of RHC sampling design, define $\overline{\textbf{V}}$=$\sum_{i=1}^N \textbf{V}_i/N$, $\overline{X}$=$\sum_{i=1}^N X_i/N$ and $\gamma$=$\sum_{i=1}^n N_i(N_i-1)/N(N-1)$, where $N_i$ is the size of the $i^{th}$ group formed randomly in RHC sampling design (see \cite{rao1962simple}), $i$=$1,\ldots,n$. Now, we state the following conditions on the population values and the sampling designs.
\begin{assumptionC}\label{ass 3}
$P(s)$ is such that $nN^{-2}\sum_{i=1}^{N}(\textbf{V}_i-\textbf{T}\pi_i)^T(\textbf{V}_i-\textbf{T}\pi_i)(\pi_i^{-1}-1)$ converges to some positive definite (p.d.) matrix as $\nu\rightarrow\infty$.
\end{assumptionC}
\begin{assumptionC}\label{ass 1}
$n\gamma\overline{X}N^{-1}\sum_{i=1}^{N}(\textbf{V}_i-X_i\overline{\textbf{V}}/\overline{X})^T(\textbf{V}_i-X_i\overline{\textbf{V}}/\overline{X})/ X_i$ converges to some p.d. matrix as $\nu\rightarrow\infty$.
\end{assumptionC}  
 Similar conditions like  C\ref{ass 2}, C\ref{ass 3} and C\ref{ass 1} are often used in sample survey literature (see  Assumption $3$ in \cite{cardot2011horvitz}, A$3$ and A$6$ in both \cite{conti2014estimation} and \cite{conti2015inference}, (HT$2$) in \cite{MR3670194}, and F$2$ and F$3$ in \cite{han2021complex}).  Conditions C\ref{ass 2} and C\ref{ass 1} hold (\textit{almost surely}), whenever $\{(X_i,$ $h_i):1\leq i \leq N\}$ are generated from a superpopulation model satisfying appropriate moment conditions (see Lemma S$2$ in the supplement). The condition $\sum_{i=1}^N ||h_i||^4/N$=$O(1)$ holds, when $h$ is a bounded function (e.g., $h(y)$=$y$ and $y$ is a binary study variable). Condition C\ref{ass B2} implies that the variation in the population values $X_1,\ldots,X_N$ cannot be too large.  Under any $\pi$PS sampling design, C\ref{ass B2} is equivalent to the condition that $L\leq  N \pi_{i}/n  \leq L^{\prime}$ for some constants $L,L^{\prime}>0$, any $i$=$1,\ldots,N$ and all sufficiently large $\nu\geq 1$. This latter condition was considered earlier in the literature (see (C$1$) in \cite{MR3670194} and Assumption $2$-\textsl{(i)} in \cite{wang2011asymptotic}). Condition C\ref{ass B2} holds (\textit{almost surely}), when $\{X_i\}_{i=1}^N$ are generated from a superpopulation distribution, and the support of the distribution of $X_i$ is bounded away from $0$ and $\infty$.  Condition C\ref{ass 3} holds (\textit{almost surely}) for SRSWOR, LMS sampling design and any $\pi$PS sampling design under  appropriate  superpopulation models (see Lemma S$2$ in the supplement). For the RHC sampling design, we also assume that $\{N_i\}_{i=1}^n$ are as follows.
 \begin{equation}\label{eq 150}
\begin{split}
N_i=
\begin{cases}
N/n,\text{ for } i=1,\cdots,n, \text{ when } N/n \text{ is an integer},\\
\lfloor N/n\rfloor,\text{ for } i=1,\cdots,k, \text{ and }\\
\lfloor N/n\rfloor+1,\text{ for } i=k+1,\cdots,n, \text{ when }N/n \text{ is not an integer},\\
\end{cases}
\end{split}
\end{equation}
where $k$ is such that $\sum_{i=1}^n N_i$=$N$. Here, $\lfloor N/n \rfloor$ is the integer part of $N/n$. \cite{rao1962simple} showed that this choice of $\{N_i\}_{i=1}^n$ minimizes the variance of the RHC estimator. Now, we state the following theorem.
\begin{theorem}\label{cor 4}
Suppose that C\ref{ass 4} through C\ref{ass 3} hold. Then, classes $1, 2,3$ and $4$ in Table \ref{table 1.2} describe equivalence classes of estimators for $g(\overline{h})$ under SRSWOR and LMS sampling design.
\end{theorem} 
For  next two theorems,  we assume that $n \max_{1\leq i\leq N} X_i/\sum_{i=1}^N X_i <1$. Note that this condition is required to hold for any without replacement $\pi$PS sampling design.
\begin{theorem}\label{thm 5}
$(i)$ If C\ref{ass 4} through C\ref{ass 3} hold, then classes $5, 6$ and $7$ in Table \ref{table 1.2} describe equivalence classes of estimators for $g(\overline{h})$ under any HE$\pi$PS sampling design.\\
$(ii)$ Under RHC sampling design, if  C\ref{ass 4} through C\ref{ass B2} and C\ref{ass 1} hold, then classes $8$ and $9$ in Table \ref{table 1.2} describe equivalence classes of estimators for $g(\overline{h})$. 
\end{theorem}
\begin{table}[h]
\centering
\caption{Disjoint equivalence classes of estimators for $g(\overline{h})$}
\vspace{.1cm}

\label{table 1.2}
\begin{threeparttable}[b]
\begin{tabular}{|c|c|c|c|c|c|c|}
\hline 
 &\multicolumn{6}{c|}{Estimators of $\overline{h}$}\\
\hline
 Sampling& GREG and & \multirow{2}{*}{HT}& \multirow{2}{*}{RHC} & \multirow{2}{*}{H\'ajek} & \multirow{2}{*}{Ratio} & \multirow{2}{*}{Product}\\
design&PEML&&&&&\\
\hline
SRSWOR & class $1$ &\tnote{1} class $2$ &  & \tnote{1} class $2$ & class $3$ & class $4$ \\
\hline
LMS & class $1$ & class $2$ &   & class $2$ & class $3$ & class $4$\\
\hline
HE$\pi$PS&  class $5$  &\tnote{2}  class $6$  &   &   class $7$  & \tnote{2}  class $6$  & \tnote{2}  class $6$\\
\hline
RHC & class $8$ &  & class $9$  & & & \\
\hline
\end{tabular}
\begin{tablenotes}
\item[1] The HT and the H\'ajek estimators coincide under SRSWOR.
\item[2] The HT, the ratio and the product estimators coincide under HE$\pi$PS sampling designs.
\end{tablenotes}
\end{threeparttable}
\end{table}
\begin{remark}\label{rem 2}
 It is to be noted that if C\ref{ass 2} through C\ref{ass 3} hold, and C\ref{ass 4} holds with $\lambda$=$0$, then in Table \ref{table 1.2}, class $8$ is merged with class $5$, and class $9$ is merged with class $6$. For details, see Section $S3$ in the supplement.  
\end{remark}
 Next, suppose that $W_i$=$\nabla g({\overline{h}})h_i^T$ for $i$=$1,\ldots,N$, $\overline{W}$=$\sum_{i=1}^N W_i/N$, $S_{xw}$= $\sum_{i=1}^N W_i X_i/N-\overline{W}$ $\overline{X}$, $S^2_w$= $\sum_{i=1}^N W_i^2/N$ $-\overline{W}^2$, $S^2_x$=$\sum_{i=1}^N X_i^2/ N-\overline{X}^2$ and $\phi$=$\overline{X}-(n/N)\sum_{i=1}^N X_i^2/N\overline{X}$. Now, we state the following theorem. 
\begin{theorem}\label{thm 6}
Suppose that the assumptions of Theorems \ref{cor 4} and \ref{thm 5} hold. Then, Table \ref{table 1.3} gives the expressions of asymptotic MSEs, $\Delta_1^2,\ldots,\Delta_9^2$, of the estimators in equivalence classes $1,\ldots,9$ in Table \ref{table 1.2}, respectively. 
\end{theorem}
\begin{remark}\label{rem 1}
 It can be shown in a straightforward way from Table \ref{table 1.3} that $\Delta^2_1\leq \Delta^2_i$ for $i$=$2,3$ and $4$. Thus, both the  plug-in  estimators of $g(\overline{h})$ that are based on the GREG and the PEML estimators are asymptotically as good as, if not better than, the  plug-in  estimators based on the HT (which coincides with the H\'ajek estimator), the ratio and the product estimators under SRSWOR, and the  plug-in  estimators based on the HT, the H\'ajek, the ratio and the product estimators under LMS sampling design. 
\end{remark}
\begin{table}[h]
\caption{Asymptotic MSEs of estimators for $g(\overline{h})$ (note that for simplifying notations,  the subscript  $\nu$ is dropped from the expressions on which limits are taken.)}
\label{table 1.3}
\begin{center}
\begin{tabular}{|c|c|}
\hline
$\Delta^2_1$=$(1-\lambda)\lim\limits_{\nu\rightarrow\infty}\big(S^2_{w}-(S_{xw}/S_x)^2\big)$ \\
\hline
$\Delta^2_2$=$(1-\lambda)\lim\limits_{\nu\rightarrow\infty}S^2_w$\\
\hline
$\Delta^2_3$=$(1-\lambda)\lim\limits_{\nu\rightarrow\infty}\big(S^2_{w}-2\overline{W}S_{xw}/\overline{X}+\left(\overline{W}/\overline{X}\right)^2S^2_x\big)$\\
\hline
$\Delta^2_4$=$(1-\lambda)\lim\limits_{\nu\rightarrow\infty}\big(S^2_{w}+2\overline{W}S_{xw}/\overline{X}+\left(\overline{W}/\overline{X}\right)^2S^2_x\big)$\\
\hline
 $\Delta^2_5$=$\lim\limits_{\nu\rightarrow\infty}(1/N)\sum_{i=1}^N \big(W_i-\overline{W}-(S_{xw}/S_x^2) (X_i-\overline{X}) \big)^2\times$\\
 $\big((\overline{X}/X_i)-(n/N)\big)$\\
\hline
 $\Delta^2_6$=$\lim\limits_{\nu\rightarrow\infty}(1/N)\sum_{i=1}^N\big\{ W_i+\phi^{-1} \overline{X}^{-1}X_i\big((n/N)\sum_{i=1}^N W_i X_i/N-\overline{W}$ $\overline{X}\big) \big\}^2\times$\\
 $\big\{(\overline{X}/X_i)-(n/N)\big\}$\\
\hline
 $\Delta^2_7$=$\lim\limits_{\nu\rightarrow\infty}(1/N)\sum_{i=1}^N \big(W_i-\overline{W}+(n/N\phi \overline{X})X_i S_{xw} \big)^2\times$\\
 $\big((\overline{X}/X_i)-(n/N)\big)$\\ 
\hline
$\Delta^2_8$=$\lim\limits_{\nu\rightarrow\infty} n\gamma   (\overline{X}/N) \sum_{i=1}^N\big(W_i-\overline{W}-(S_{xw}/S_x^2) (X_i-\overline{X})\big)^2/ X_i$\\
\hline
$\Delta^2_9$=$\lim\limits_{\nu\rightarrow\infty} n\gamma \big((\overline{X}/N)\sum_{i=1}^N W_i^2/X_i-\overline{W}^2\big)$\\
\hline
\end{tabular}
\end{center}
\end{table}
 Let us now consider some examples of $g(\overline{h})$ in Table \ref{table 1.7} below.  Conclusions of  Theorems \ref{cor 4} through \ref{thm 6}, and Remarks \ref{rem 2} and \ref{rem 1}  hold for all the parameters in Table \ref{table 1.7}. Here, we recall from the introduction that for the variance, the correlation coefficient and the regression coefficient, we consider only the  plug-in  estimators that are based on the H\'ajek and the PEML estimators.
\begin{table}[h]
\centering  
\caption{Examples of $g(\overline{h})$}
\label{table 1.7}
\begin{threeparttable}[b]
\begin{tabular}{|c|c|c|c|c|}
\hline 
Parameter& $d$& $p$& $h$& $g$\\
\hline
Mean& $1$ & $1$&  $h(y)$=$y$& $g(s)$=$s$\\
\hline
Variance& $1$&$2$& $h(y)$=$(y^2,y)$& $g(s_1,s_2)$=$s_1-s_2^2$\\
\hline
Correlation& \multirow{2}{*}{$2$}& \multirow{2}{*}{$5$}& $h(z_1,z_2)$=$(z_1, z_2,$& $g(s_1,s_2,s_3, s_4, s_5)$=$(s_5-s_1 s_2)/$\\
coefficient& & &$z^2_1,z^2_2, z_1 z_2)$&$((s_3-s^2_1)(s_4-s^2_2))^{1/2}$ \\
\hline
Regression&\multirow{2}{*}{$2$}& \multirow{2}{*}{$4$}& $h(z_1,z_2)$=$(z_1, z_2,$& $g(s_1,s_2,s_3, s_4, s_5)$=\\
coefficient& & &$z^2_2, z_1 z_2)$&$(s_4-s_1 s_2)/(s_3-s^2_2)$ \\
\hline
\end{tabular}
\end{threeparttable}
\end{table} 
\section{ Comparison of estimators under superpopulation models}\label{subsec 4.1} 
 In this section, we derive asymptotically efficient estimators for the mean, the variance, the correlation coefficient and the regression coefficient under superpopulations  satisfying linear regression  models. Earlier, \cite{raj1954sampling} \cite{MR0474578}, \cite{avadhani1970comparison}, \cite{avadhani1972comparison}  and  \cite{MR0474575} used the linear relationship between the $Y_i$'s and the $X_i$'s for comparing different estimators of the mean. However, they did not use any probability distribution for the $(Y_i,X_i)$'s. Subsequently, \cite{rao2003small}, \cite{fuller2011sampling}, \cite{chaudhuri2014} (see chap. $5$) and some other authors  considered the linear relationship between the $Y_i$'s and the $X_i$'s and a probability distribution for the $(Y_i,X_i)$'s for constructing different estimators and studying their behaviour.  However, the problem of finding asymptotically the most efficient estimator for the mean among a large class of estimators as considered in this paper was not done  earlier in the literature. Also, large sample comparisons of the plug-in estimators of the variance, the correlation coefficient and the regression coefficient considered in this paper were not carried out  in the earlier literature.  Suppose that $\{(Y_i,X_i):1\leq i\leq N\}$ are i.i.d. random vectors defined on a probability space $(\Omega,\mathbb{F},\mathbb{P})$. Without any loss of generality, for convenience, we take $\sigma_x^2$=$E_{\mathbb{P}}(X_i-E_{\mathbb{P}}(X_i))^2$ =$1$. This might require rescaling the variable $x$. Here, $E_{\mathbb{P}}$ denotes the expectation with respect to the probability measure $\mathbb{P}$. Recall that the population values $X_1,\ldots,X_N$ are used to implement some of the sampling designs. In such a case, we consider a function $P(s,\omega)$ on $\mathcal{S}\times\Omega$ so that $P(s,\cdot)$ is a random variable on $\Omega$ for each $s\in \mathcal{S}$, and $P(\cdot,\omega)$ is a probability distribution on $\mathcal{S}$ for each $\omega\in \Omega$ (see \cite{MR3670194}). Note that $P(s,\omega)$ is the sampling design for any fixed $\omega$ in this case. Then, the $\Delta_j^2$'s in Table \ref{table 1.3} can be expressed in terms of superpopulation moments of $(h(Y_i),X_i)$ by strong law of large numbers (SLLN). In that case,  we can easily compare different classes of estimators in Table \ref{table 1.2} under linear models.  Let us first state the following conditions on superpopulation distribution $\mathbb{P}$. 
\begin{assumptionC}\label{ass C1}
 $X_i\leq b$ \textit{a.s.} $[\mathbb{P}]$ for some $0<b<\infty$, $E_{\mathbb{P}}(X_i)^{-2}<\infty$, and $\max_{1\leq i\leq N} X_i/ $ $\min_{1\leq i\leq N} X_i$=$O(1)$ as $\nu\rightarrow\infty$ \textit{a.s.} $[\mathbb{P}]$.  Also, the support of the distribution of $(h(Y_i),X_i)$ is not a subset of a hyper-plane in $\mathbb{R}^{p+1}$.
\end{assumptionC}
 The condition, $X_i\leq b$ \textit{a.s.} $[\mathbb{P}]$ for some $0<b<\infty$, in C\ref{ass C1} and C\ref{ass 4} along with $0\leq \lambda<E_{\mathbb{P}}(X_i)/ b$ ensure that $n\max_{1\leq i\leq N} X_i/\sum_{i=1}^N X_i <1$ for all sufficiently large $\nu$ \textit{a.s.} $[\mathbb{P}]$, which is required to hold for any without replacement $\pi$PS sampling design. On the other hand, the condition, $\max_{1\leq i\leq N} X_i/\min_{1\leq i\leq N} X_i$ =$O(1)$ as $\nu\rightarrow\infty$ \textit{a.s.} $[\mathbb{P}]$, in C\ref{ass C1}  implies that C\ref{ass B2} holds \textit{a.s.} $[\mathbb{P}]$. Further, C\ref{ass C1} ensures that C\ref{ass 1} holds \textit{a.s.} $[\mathbb{P}]$ (see Lemma S$2$ in the supplement).  C\ref{ass C1}  also ensures that C\ref{ass 3} holds under LMS and any $\pi$PS sampling designs \textit{a.s.} $[\mathbb{P}]$ (see Lemma S$2$ in the supplement). 
\par

 Let us first consider the case, when $g(\overline{h})$  is the mean of $y$ (see the $2^{nd}$ row in Table \ref{table 1.7}).  Further, suppose that $Y_i$=$\alpha+\beta X_i+\epsilon_i$ for $\alpha,\beta \in \mathbb{R}$ and $i$=$1,\ldots,N$, where $\{\epsilon_i\}_{i=1}^N$ are i.i.d. random variables and are independent of $\{X_i\}_{i=1}^N$ with $E_{\mathbb{P}}(\epsilon_i)$=$0$ and $E_{\mathbb{P}}(\epsilon_i)^4<\infty$. Then, we have the following theorem.

\begin{theorem}\label{thm 2}
Suppose that C\ref{ass 4} holds with $0\leq\lambda<E_{\mathbb{P}}(X_i)/b$, and C\ref{ass C1} holds. Then, a.s. $[\mathbb{P}]$, the PEML estimator under SRSWOR as well as LMS sampling design has the lowest asymptotic MSE among all the estimators of the population mean under different sampling designs considered in this paper.
\end{theorem}
\begin{remark}
Note that for SRSWOR, the PEML estimator of the population mean has the lowest asymptotic MSE among all the estimators considered in this paper a.s. $[\mathbb{P}]$, when C\ref{ass 4} holds with $ 0 \leq \lambda<1$ and C\ref{ass C1} holds (see the proof of Theorem \ref{thm 2}).
\end{remark}
 
\begin{theorem}\label{Cor 3}
Suppose that C\ref{ass 4} holds with  $0\leq\lambda<E_{\mathbb{P}}(X_i)/b$, and C\ref{ass C1} holds. Then, a.s. $[\mathbb{P}]$, the performance of the PEML estimator of the population mean under RHC and any HE$\pi$PS sampling designs, which use auxiliary information, is worse than its performance under SRSWOR.
\end{theorem}
Recall from the introduction that for the variance, the correlation coefficient and the regression coefficient, we compare only those equivalence classes, which contain the  plug-in  estimators based on the H\'ajek and the PEML estimators. We first state the following condition. 
\begin{assumptionC}\label{ass C3}
$\xi>2\max\{\mu_1,\mu_{-1}/(\mu_1\mu_{-1}-1)\}$, where $\xi$=$\mu_3-\mu_2\mu_1$ is the covariance between $X_i^2$ and $X_i$ and $\mu_j$=$E_{\mathbb{P}}(X_i)^j$, $j$=$-1,1,2,3$.
\end{assumptionC}
 The above condition is used to prove part $(ii)$ in each of Theorems \ref{thm 4} and \ref{thm 3}. This condition holds when the $X_i$'s follow well known distributions like Gamma (with shape parameter value larger than $1$ and any scale parameter value), Beta (with the second shape parameter value greater than the first shape parameter value and the first shape parameter value larger than $1$), Pareto (with shape parameter value lying in the interval $(3,(5+\sqrt{17})/2)$ and any scale parameter value), Log-normal (with both the parameters taking any value) and Weibull (with shape parameter value lying in the interval $(1, 3.6)$ and any scale parameter value).  Now, consider the case, when $g(\overline{h})$ is the variance of $y$  (see the $3^{rd}$ row in Table \ref{table 1.7}).  Recall the linear model $Y_i$=$\alpha+\beta X_i+\epsilon_i$ from above and assume that $E_{\mathbb{P}}(\epsilon_i)^8<\infty$. Then, we have the following theorem.

\begin{theorem}\label{thm 4}
$(i)$ Let us first consider SRSWOR and LMS sampling design and suppose that C\ref{ass 4} and C\ref{ass C1} hold. Then, a.s. $[\mathbb{P}]$, the plug-in estimator of the population variance based on the PEML estimator has the lowest asymptotic MSE among all the estimators considered in this paper.\\ 
$(ii)$ Next, consider any HE$\pi$PS sampling design and suppose that C\ref{ass 4} holds with $0\leq\lambda<E_{\mathbb{P}}(X_i)/b$, and C\ref{ass C1} and C\ref{ass C3} hold. Then, a.s. $[\mathbb{P}]$, the plug-in estimator of the population variance based on the H\'ajek estimator has the lowest asymptotic MSE among all the estimators considered in this paper.
\end{theorem}
 Now, suppose that $y$=$(z_1,z_2)\in\mathbb{R}^2$ and consider the case, when $g(\overline{h})$ is the correlation coefficient between $z_1$ and $z_2$  (see the $4^{th}$ row in Table \ref{table 1.7}).  Let us also consider the case, when $g(\overline{h})$ is the regression coefficient of $z_1$ on $z_2$  (see the $5^{th}$ row in Table \ref{table 1.7}).  Further, suppose that $Y_i$=$\alpha+\beta X_i+\epsilon_i$ for $Y_i$=$(Z_{1i},Z_{2i})$, $\alpha,\beta\in \mathbb{R}^2$ and $i$=$1,\ldots,N$, where $\{\epsilon_i\}_{i=1}^N$ are i.i.d. random vectors in $\mathbb{R}^2$ independent of $\{X_i\}_{i=1}^N$ with $E_{\mathbb{P}}(\epsilon_{i})$=$0$ and $E_{\mathbb{P}}||\epsilon_{i}||^8<\infty$. Then, we have the following theorem.

\begin{theorem}\label{thm 3}
$(i)$ Let us first consider SRSWOR and LMS sampling design and suppose that C\ref{ass 4} and C\ref{ass C1} hold. Then, a.s. $[\mathbb{P}]$, the plug-in estimator of each of the correlation and the regression coefficients in the population based on the PEML estimator has the lowest asymptotic MSE among all the estimators considered in this paper. \\
$(ii)$ Next, consider any HE$\pi$PS sampling design and suppose that C\ref{ass 4} holds with $0\leq\lambda<E_{\mathbb{P}}(X_i)/b$, and C\ref{ass C1} and C\ref{ass C3} hold. Then, a.s. $[\mathbb{P}]$, the plug-in estimator of each of the above parameters based on the H\'ajek estimator has the lowest asymptotic MSE among all the estimators considered in this paper. 
\end{theorem}

\section{Data analysis}\label{subsec 3}
In this section, we carry out an empirical comparison of the estimators of the mean, the variance, the correlation coefficient and the regression coefficient, which are discussed in this paper, based on both real and synthetic data. Recall that for the above parameters, we have considered several estimators and sampling designs, and conducted a theoretical comparison of those estimators in Sections \ref{sec 4} and \ref{subsec 4.1}. For empirical comparison, we exclude some of the estimators considered in theoretical comparison so that the results of the comparison become concise and comprehensive. The reasons for excluding those estimators are given below. 
\begin{itemize}
\item[(i)] Since the GREG estimator is well known to be asymptotically better than the HT, the ratio and the product estimators under SRSWOR (see \cite{MR0474575}), we exclude these latter estimators under SRSWOR.
\item[(ii)] Since the MSEs of the estimators under LMS sampling design become very close to the MSEs of the same estimators under SRSWOR as expected from  Theorem \ref{cor 4},  we do not report these results under LMS sampling design.  Moreover, SRSWOR is a simpler and more commonly used sampling design than LMS sampling design. 
\end{itemize}
Thus we consider the estimators mentioned in Table \ref{table 1.4} below for the empirical comparison. 
\begin{table}[h]
\caption{Estimators considered for the empirical comparison}
\label{table 1.4}
\begin{center}
\begin{threeparttable}[b]
\begin{tabular}{|c|c|}
\hline 
Parameters& Estimators \\
\hline
\multirow{5}{*}{Mean} & GREG and PEML estimators under SRS-\\
& WOR; HT, H\'ajek, GREG and PEML\\
& estimators under \tnote{$3$} RS sampling design;\\
& and RHC, GREG and PEML estimators \\
& under RHC sampling design\\
\hline
Variance, correlation & Obtained by plugging in H\'ajek and PEML \\
coefficient and regression&  estimators under SRSWOR and\tnote{1} RS \\
coefficient& sampling design, and PEML estimator  \\
&under RHC sampling design\\
\hline
\end{tabular}
\begin{tablenotes}
\item[3] We consider RS sampling design since it is a HE$\pi$PS sampling design, and it is easier to implement than other HE$\pi$PS sampling designs.
\end{tablenotes}
\end{threeparttable}
\end{center}
\end{table}
Recall from Table \ref{table 1.1} that the HT, the ratio and the product estimators of the mean coincide under any HE$\pi$PS sampling design. We draw $I$=$1000$ samples each of sizes $n$=$75$, $100$ and $125$ using sampling designs mentioned in Table \ref{table 1.4}. We use the $R$ software for drawing samples as well as computing different estimators. For RS sampling design, we use the `pps' package in $R$, and for the PEML estimator, we use $R$ codes in \cite{wu2005algorithms}. Two estimators $g(\hat{\overline{h}}_1)$ and $g(\hat{\overline{h}}_2)$ of $g(\overline{h})$ under sampling designs $P_1(s)$ and $P_2(s)$, respectively, are compared empirically by means of the relative efficiency defined as
$$
RE(g(\hat{\overline{h}}_1),P_1|g(\hat{\overline{h}}_2),P_2)=MSE_{P_2}(g(\hat{\overline{h}}_2))/MSE_{P_1}(g(\hat{\overline{h}}_1)),
$$ 
where $MSE_{P_j}(g(\hat{\overline{h}}_j))$=$I^{-1}\sum_{l=1}^I(g(\hat{\overline{h}}_{jl})-g(\overline{h}_0))^2$ is the empirical mean squared error of $g(\hat{\overline{h}}_j)$ under $P_j(s)$, $j$=$1,2$. Here, $\hat{\overline{h}}_{jl}$ is the estimate of $\overline{h}$ based on the $j^{th}$ estimator and the $l^{th}$ sample, and $g(\overline{h}_0)$ is the true value of the parameter $g(\overline{h})$, $j$=$1,2$, $l$=$1,\ldots,I$. $g(\hat{\overline{h}}_1)$ under $P_1(s)$ will be more efficient than $g(\hat{\overline{h}}_2)$ under $P_2(s)$ if $RE(g(\hat{\overline{h}}_1),P_1|g(\hat{\overline{h}}_2),P_2)>1$.
\par

 Next, for each of the parameters considered in this section, we compare average lengths of asymptotically $95\%$ confidence intervals (CIs) constructed based on several estimators used in this section. In order to construct asymptotically $95\%$ CIs, we need an estimator of the asymptotic MSE of $\sqrt{n}(g(\hat{\overline{h}})-g(\overline{h}))$ and we shall discuss it in detail now. If we consider SRSWOR or RS sampling design, it follows from the proofs of Theorems \ref{cor 4} and \ref{thm 5} that the asymptotic MSE of $\sqrt{n}(g(\hat{\overline{h}})-g(\overline{h}))$ is $\tilde{\Delta}^2_1$=$\lim_{\nu\rightarrow\infty}nN^{-2}\nabla g(\overline{h}) \sum_{i=1}^{N}(\textbf{V}_i-\textbf{T}\pi_i)^T(\textbf{V}_i-\textbf{T}\pi_i)(\pi_i^{-1}-1) \nabla g(\overline{h})^T$, where $\textbf{T}$=$\sum_{i=1}^N \textbf{V}_i(1-\pi_i)/$ $\sum_{i=1}^N \pi_i(1-\pi_i)$. Moreover, $\textbf{V}_i$ is $h_i$ or $h_i-\overline{h}$ or $h_i-\overline{h}-S_{xh}(X_i-\overline{X})/S_x^2$ if $\hat{\overline{h}}$ is $\hat{\overline{h}}_{HT}$ or $\hat{\overline{h}}_{H}$ or $\hat{\overline{h}}_{PEML}$ (as well as $\hat{\overline{h}}_{GREG}$) with $d(i,s)$=$(N\pi_i)^{-1}$, respectively. Recall from the paragraph following C\ref{ass B2} that $S_{xh}$=$\sum_{i=1}^N X_ih_i/N-\overline{X}$ $\overline{h}$. Following the idea of \cite{cardot2014variance}, we estimate $\tilde{\Delta}^2_1$ by 
\begin{align}\label{eq 5}
\begin{split}
&\hat{\Delta}^2_1=nN^{-2}\nabla g(\hat{\overline{h}})\sum_{i\in s}(\hat{\textbf{V}}_i-\hat{\textbf{T}}\pi_i)^T (\hat{\textbf{V}}_i-\hat{\textbf{T}}\pi_i)(\pi_i^{-1}-1)\pi_i^{-1}\nabla g(\hat{\overline{h}})^T,
\end{split}
\end{align}
where $\hat{\textbf{T}}$=$\sum_{i\in s} \hat{\textbf{V}}_i(\pi_i^{-1}-1)/\sum_{i\in s} (1-\pi_i)$, $\hat{\overline{h}}$=$\hat{\overline{h}}_{HT}$ in the case of the mean, the variance and the regression coefficient, and $\hat{\overline{h}}$=$\hat{\overline{h}}_{H}$ in the case of the correlation coefficient. Here, $\hat{\textbf{V}}_i$ is $h_i$ or $h_i-\hat{\overline{h}}_{HT}$ or $h_i-\hat{\overline{h}}_{HT}-\hat{S}_{xh,1}(X_i-\hat{\overline{X}}_{HT})/\hat{S}_{x,1}^2$ if $\hat{\overline{h}}$ is $\hat{\overline{h}}_{HT}$ or $\hat{\overline{h}}_{H}$ or $\hat{\overline{h}}_{PEML}$ (as well as $\hat{\overline{h}}_{GREG}$) with $d(i,s)$=$(N\pi_i)^{-1}$. Further, $\hat{S}_{xh,1}$=$\sum_{i\in s}(N\pi_i)^{-1} X_i h_i-\hat{\overline{X}}_{HT} \hat{\overline{h}}_{HT}$ and $\hat{S}_{x,1}^2$=$\sum_{i\in s}(N\pi_i)^{-1} X_i^2-\hat{\overline{X}}_{HT}^2$. We estimate $\overline{h}$ in $\nabla g(\overline{h})$ by $\hat{\overline{h}}_{HT}$ in the case of the mean, the variance and the regression coefficient because $\hat{\overline{h}}_{HT}$ is an unbiased estimator, and it is easier to compute than the other estimators of $\overline{h}$ considered in this paper. On the other hand, different estimators of the correlation coefficient that are considered in this paper may become undefined if we estimate $\overline{h}$ by any estimator other than $\hat{\overline{h}}_{H}$ and $\hat{\overline{h}}_{PEML}$ (see the $4^{th}$ paragraph in the introduction). In this case, we choose $\hat{\overline{h}}_{H}$ because it is easier to compute than $\hat{\overline{h}}_{PEML}$.
\par

Next, if we consider RHC sampling design, it follows from the proof of Theorem \ref{thm 5} that the asymptotic MSE of $\sqrt{n}(g(\overline{h})-g(\hat{\overline{h}}))$ is $\tilde{\Delta}^2_2$=$\lim_{\nu\rightarrow\infty}n\gamma \overline{X}N^{-1}\times$ $\nabla g(\overline{h})\sum_{i=1}^{N}(\textbf{V}_i-X_i\overline{\textbf{V}}/\overline{X})^T (\textbf{V}_i-X_i\overline{\textbf{V}}/\overline{X})X_i^{-1}\nabla g(\overline{h})^T$, where $\gamma$ and $\overline{\textbf{V}}$ are as in the paragraph following C\ref{ass B2}. Moreover, $\textbf{V}_i$ is $h_i$ or $h_i-\overline{h}-S_{xh}(X_i-\overline{X})/S_x^2$ if $\hat{\overline{h}}$ is $\hat{\overline{h}}_{RHC}$ or $\hat{\overline{h}}_{PEML}$ (as well as $\hat{\overline{h}}_{GREG}$) with $d(i,s)$=$G_i/NX_i$, respectively. We estimate $\tilde{\Delta}^2_2$ by 
\begin{align}\label{eq 6}
\begin{split}
&\hat{\Delta}^2_2=n\gamma \overline{X}N^{-1} \nabla g(\hat{\overline{h}}) \sum_{i\in s}(\hat{\textbf{V}}_i-X_i\hat{\overline{\textbf{V}}}_{RHC}/\overline{X})\times\\
&(\hat{\textbf{V}}_i-X_i\hat{\overline{\textbf{V}}}_{RHC}/ \overline{X})(G_i X_i^{-2})\nabla g(\hat{\overline{h}} )^T,
\end{split}
\end{align}
where $\hat{\overline{\textbf{V}}}_{RHC}$=$\sum_{i\in s}\hat{\textbf{V}}_i G_i/N X_i$, $\hat{\overline{h}}$=$\hat{\overline{h}}_{RHC}$ in the case of the mean, the variance and the regression coefficient, and $\hat{\overline{h}}$=$\hat{\overline{h}}_{PEML}$ in the case of the correlation coefficient. Here, $\hat{\textbf{V}}_i$ is $h_i$ or $h_i-\hat{\overline{h}}_{RHC}-\hat{S}_{xh,2}(X_i- \overline{X})/\hat{S}_{x,2}^2$ if $\hat{\overline{h}}$ is $\hat{\overline{h}}_{RHC}$ or $\hat{\overline{h}}_{PEML}$ (as well as $\hat{\overline{h}}_{GREG}$) with $d(i,s)$=$G_i/N X_i$. Further, $\hat{S}_{xh,2}$=$\sum_{i\in s}  h_i G_i/N- \overline{X}$ $ \hat{\overline{h}}_{RHC}$ and $\hat{S}_{x,1}^2$=$\sum_{i\in s} X_i G_i/N-\overline{X}^2$. In the case of the mean, the variance and the regression coefficient, we estimate $\overline{h}$ in $\nabla g(\overline{h})$ by $\hat{\overline{h}}_{RHC}$ for the same reason as discussed in the preceding paragraph, where we discuss the estimation of $\overline{h}$ by $\hat{\overline{h}}_{HT}$ under SRSWOR and RS sampling design. On the other hand, in the case of the correlation coefficient, we estimate $\overline{h}$ in $\nabla g(\overline{h})$ by $\hat{\overline{h}}_{PEML}$ under RHC sampling design so that the estimator of the correlation coefficient appeared in the expression of $\nabla g(\overline{h})$ in this case becomes well defined.
\par

We draw $I$=$1000$ samples each of sizes $n$=$75$, $100$ and $125$ using sampling designs mentioned in Table \ref{table 1.4}. Then, for each of the parameters, the sampling designs and the estimators mentioned in Table \ref{table 1.4}, we construct $I$ many asymptotically $95\%$ CIs based on these samples and compute the average and the standard deviation of their lengths. 
\subsection{Analysis based on synthetic data}\label{subsec 3.1}
In this section, we consider the population values $\{(Y_i,X_i):1\leq i\leq N\}$ on $(y,x)$ generated from a linear model as follows. We choose $N$=$5000$ and generate the $X_i$'s from a gamma distribution with mean $1000$ and standard deviation (s.d.) $200$. Then, $Y_i$ is generated from the linear model $Y_i$=$500+X_i+\epsilon_i$ for $i$=$1,\ldots,N$, where $\epsilon_i$ is generated independently of $\{X_i\}_{i=1}^N$ from a normal distribution with mean $0$ and s.d. $100$. We also generate the population values $\{(Y_i,X_i):1\leq i\leq N\}$ from a linear model, when $y$=$(z_1,z_2)$ is a bivariate study variable. The population values $\{X_i\}_{i=1}^N$ are generated in the same way as in the earlier case. Then, $Y_i$=$(Z_{1i},Z_{2i})$ is generated from the linear model $Z_{ji}$=$\alpha_j+X_i+\epsilon_{ji}$ for $i$=$1,\ldots,N$, where $\alpha_1$=$500$ and $\alpha_2$=$1000$. The $\epsilon_{1i}$'s are generated independently of the $X_i$'s from a normal distribution with mean $0$ and s.d. $100$, and the $\epsilon_{2i}$'s are generated independently of the $X_i$'s and the $\epsilon_{1i}$'s from a normal distribution with mean $0$ and s.d. $200$. We consider the estimation of  the mean and the variance of $y$ for the first dataset and the correlation and the regression coefficients between $z_1$ and $z_2$ for the second dataset. 
\par
The results of the empirical comparison based on synthetic data are summarized as follows. For each of the mean, the variance, the correlation coefficient and the regression coefficient, the  plug-in  estimator based on the PEML estimator under SRSWOR turns out to be more efficient than any other estimator under any other sampling design (see Tables $2$ through $6$ in the supplement) considered in Table \ref{table 1.4}  when compared in terms of relative efficiencies. Also, for each of the above parameters, asymptotically $95\%$ CI based on the PEML estimator under SRSWOR has the least average length (see Tables $7$ through $11$ in the supplement).  Thus the empirical results stated here corroborate the theoretical results stated in Theorems \ref{thm 2} through \ref{thm 3}.
\subsection{Analysis based on real data}\label{subsec 3.2}
In this section, we consider a dataset on the village amenities in the state of West Bengal in India obtained from the Office of the Registrar General \& Census Commissioner, India (\href{https://censusindia.gov.in/nada/index.php/catalog/1362}{https://censusindia.gov.in/nada/index.php/catalog/1362}). Relevant study variables for this dataset are described in Table \ref{table 1.5} below. We consider the following estimation problems for a population consisting of $37478$ villages. For these estimation problems, we use the number of people living in village $x$ as the size variable. 
\begin{table}[h]
\caption{Description of study variables}
\label{table 1.5}
\begin{center}
\begin{tabular}{|c|c|} 
\hline
$y_1$& Number of primary schools in village\\
\hline
$y_2$& Scheduled castes population size in village\\
\hline
$y_3$& Number of secondary schools in village \\
\hline
$y_4$& Scheduled tribes population size in village\\
\hline
\end{tabular}
\end{center}
\end{table}
\begin{itemize}
\item[(i)] First, we consider the estimation of the mean and the variance of each of $y_1$ and $y_2$. It can be shown from the scatter plot  and the least square regression line  in Figure $1$ in the supplement that $y_1$ and $x$ have an approximate linear relationship. Also, the correlation coefficient between $y_1$ and $x$ is $0.72$. On the other hand, $y_2$ and $x$ do not seem to have a linear relationship (see  the scatter plot and the least square regression line in  Figure $2$ in the supplement).
\item[(ii)] Next, we consider the estimation of the correlation and the regression coefficients of $y_1$ and $y_3$ as well as of $y_2$ and $y_4$. The scatter plot  and the least square regression line  in Figure $3$ in the supplement  show  that $y_3$ does not seem to be dependent on $x$. Further, we see from the scatter plot  and the least square regression line  of $y_4$ and $x$ (see Figure $4$ in the supplement) that $y_4$ and $x$ do not seem to have a linear relationship.
\end{itemize} 
The results of the empirical comparison based on real data are summarized in Table \ref{table 1.6} below. 
\begin{table}[h]
\caption{Most efficient estimators  in terms of relative efficiencies (it follows from Tables $22$ through $31$ in the supplement that asymptotically $95\%$ CIs based on most efficient estimators have least average lengths.)}
\label{table 1.6}
\begin{center}
\begin{tabular}{|c|c|} 
\hline
Parameters& Most efficient estimators\\
\hline
\multirow{2}{*}{Mean and variance of $y_1$}&  The plug-in estimator based on the  the\\
& PEML estimator under SRSWOR\\
\hline
Mean of $y_2$& The HT estimator under RS sampling design\\
\hline
\multirow{2}{*}{Variance of $y_2$}&  The plug-in estimator based on the  H\'ajek \\
& estimator under RS sampling design\\
\hline
Correlation and regression &  The plug-in estimator based on the \color{black}PEML \\
coefficients of $y_1$ and $y_3$& estimator under SRSWOR\\
\hline
Correlation and regression &  The plug-in estimator based on the  H\'ajek\\
coefficients of $y_2$ and $y_4$& estimator under RS sampling design\\
\hline
\end{tabular}
\end{center}
\end{table}
For further details see Tables $12$ through $31$ in the supplement.  The approximate linear relationship between $y_1$ and $x$ (see the scatter plot and the least square regression line in Figure $1$ in the supplement) could be a possible reason why the plug-in estimator based on the PEML estimator under SRSWOR becomes the most efficient for each of the mean and the variance of $y_1$ among all the estimators under different sampling designs considered in this section. Also, possibly for the same reason, the plug-in estimators of the correlation and the regression coefficients between $y_1$ and $y_3$ based on the PEML estimator under SRSWOR become the most efficient among all the estimators under different sampling designs considered in this section. 
\par

 On the other hand, any of $y_2$, and $y_4$ does not seem to have a linear relationship with $x$ (see the scatter plots and the least square regression lines in Figures $2$ and $4$ in the supplement). Possibly, because of this reason, the plug-in estimators of the parameters related to $y_2$ and $y_4$ based on the PEML estimator are not able to outperform the the plug-in estimators of those parameters based on the HT and the H\'ajek estimators. Next, we observe that there are substantial correlation present between $y_2$ and $x$ (correlation coefficient=$0.67$), and $y_4$ and $x$ (correlation coefficient=$0.25$). Possibly, because of this, under RS sampling design, which uses the auxiliary information, the plug-in estimators of the parameters related to $y_2$ and $y_4$ based on the HT and the H\'ajek estimators become the most efficient among all the estimators under different sampling designs considered in this section.  
\section{Concluding discussion and remarks}\label{sec 8}
It follows from Theorem \ref{thm 2} that the PEML estimator of the mean under SRSWOR becomes asymptotically either more efficient than or equivalent to any other estimator under any other sampling design considered in this paper. It also follows from  Theorems \ref{cor 4} and \ref{thm 5}  that the GREG estimator of the mean is asymptotically equivalent to the PEML estimator under different sampling designs considered in this paper.  However, our numerical studies (see Section \ref{subsec 3}) based on finite samples indicate that the PEML estimator of the mean performs slightly better than the GREG estimator under all the sampling designs considered in Section \ref{subsec 3} (see Tables $2$, $12$ and $14$ in the supplement).  Moreover, as pointed out in the introduction, if the estimators of the variance, the correlation coefficient and the regression coefficient are constructed by plugging in the GREG estimator of the mean, then the estimators of the population variances involved in these parameters may become negative. On the other hand, if the estimators of these parameters are constructed by plugging in the PEML estimator of the mean, then such a problem does not occur. Further, for these parameters, depending on sampling designs, the  plug-in  estimator based on either the PEML or the H\'ajek estimator turns out to be asymptotically best among different estimators that we have considered (see Theorems \ref{thm 4} and \ref{thm 3}).
\par

 We see from Theorem \ref{thm 2} that for the population mean, the PEML estimator, which is not design unbiased, performs better than design unbiased estimators like the HT and the RHC estimators. Further, as pointed out in the introduction, the plug-in estimators of the population variance based on the HT and the RHC estimators may become negative. This affects the plug-in estimators of the correlation and the regression coefficients based on the HT and the RHC estimators. 
\par

 It follows from Table \ref{table 1.2} that under LMS sampling design, the large sample performances of all the estimators of functions of means considered in this paper are the same as their large sample performances under SRSWOR. The LMS sampling design was introduced to make the ratio estimator of the mean unbiased. It follows from Remark \ref{rem 1} in Section \ref{sec 4} that the performance of the ratio estimator of the mean is worse than several other estimators that we have considered even under LMS sampling design. 
\par

The coefficient of variation is another well known finite population parameter, which can be expressed as a function of population mean $g(\overline{h})$. We have $d$=$1$, $p$=$2$, $h(y)$=$(y^2,y)$ and $g(s_1,s_2)$=$\sqrt{s_1-s_2^2}/ s_2$ in this case. Among the estimators considered in this paper, the  plug-in  estimators of $g(\overline{h})$ that are based on the PEML and the H\'ajek estimators of the mean can be used for estimating this parameter since it involves the finite population variance (see the $4^{th}$ paragraph in the introduction).  We have avoided reporting the comparison of the estimators of the coefficient of variation in this paper because of complex mathematical expressions.  However, the asymptotic results stated in Theorems \ref{thm 4} and \ref{thm 3} also hold for this parameter. 
\par

An empirical comparison of the biased estimators considered in this paper and their bias-corrected versions are carried out based on jackknifing in Section S$4$ in the supplement. It follows from this comparison that for all the parameters considered in this paper, the bias-corrected estimators become worse than the original biased estimators in the cases of both the synthetic and the real data. This is because, although bias-correction results in reduction of biases in the original biased estimators, the variances of these estimators increase substantially after bias-correction. 

\section*{Supplementary material}
In the supplement, we discuss some conditions from the main paper and demonstrate situations, where these conditions hold. Then, we state and prove some additional mathematical results. We also give the proofs of Remark \ref{rem 2} and Theorems \ref{thm 5}, \ref{thm 6}, \ref{thm 4} and \ref{thm 3}. The biased estimators considered in this paper are then compared empirically with their bias-corrected versions based on jackknifing in terms of MSE. Finally, we provide the numerical results related to the analysis based on both synthetic and real data (see Section \ref{subsec 3}).     

\section*{Acknowledgments}
The authors gratefully acknowledge careful reading of an earlier version of the paper by an anonymous reviewer and an associate editor. Critical comments and constructive suggestions from the reviewer and the associate editor led to significant improvement of the paper.  The authors would also like to thank Prof. Aloke Kar and Prof. Sandip Mitra  for several discussions about Section \ref{subsec 3.2} of the paper.

\section*{Appendix}
Let us begin by providing the expressions (see Table \ref{table A} below) 
\begin{table}[h]
\caption{Estimators of $\overline{Y}$}
\label{table A}
\begin{center}
\begin{tabular}{|c|c|} 
\hline
Estimator& Expression\\
\hline
HT& $\hat{\overline{Y}}_{HT}$=$\sum_{i\in s} (N\pi_i)^{-1}Y_i$\\
\hline
RHC& $\hat{\overline{Y}}_{RHC}$=$\sum_{i\in s}G_i Y_i/NX_i$\\
\hline
H\'ajek & $\hat{\overline{Y}}_{H}$=$\sum_{i\in s}\pi^{-1}_iY_i/\sum_{i\in s}\pi_i^{-1}$\\
\hline
Ratio& $\hat{\overline{Y}}_{RA}$=$(\sum_{i\in s}\pi_i^{-1} Y_i/\sum_{i\in s}\pi_i^{-1} X_i)\overline{X}$ \\
\hline
Product& $\hat{\overline{Y}}_{PR}$=$\sum_{i\in s}(N\pi_i)^{-1}Y_i\sum_{i\in s}(N\pi_i)^{-1} X_i/$ $\overline{X}$\\
\hline
GREG& $\hat{\overline{Y}}_{GREG}$=$\hat{\overline{Y}}_{*}+\hat{\beta}(\overline{X}-\hat{\overline{X}}_{*})$\\
\hline
PEML& $\hat{\overline{Y}}_{PEML}$=$\sum_{i\in s}c_iY_i$\\
\hline
\end{tabular}
\end{center}
\end{table}
of those estimators of $\overline{Y}$, which are considered in this paper. In Table \ref{table A}, $\{\pi_i\}_{i=1}^N$ denote inclusion probabilities, and $G_i$ is the total of the $x$ values of that randomly formed group from which the $i^{th}$ population unit is selected in the sample by RHC sampling design (cf. \cite{chaudhuri2006feasibility}). In the case of the GREG estimator, $\hat{\overline{Y}}_{*}$=$\sum_{i\in s}d(i,s)Y_i/\sum_{i\in s}d(i,s)$, $\hat{\overline{X}}_{*}$=$\sum_{i\in s}d(i,s)$ $\times X_i/\sum_{i\in s}d(i,s)$ and $\hat{\beta}$=$\sum_{i\in s}d(i,s)(Y_i-\hat{\overline{Y}}_{*})(X_i-\hat{\overline{X}}_{*})/\sum_{i\in s}d(i,s) (X_i-\hat{\overline{X}}_{*})^2$, where $\{d(i,s):i\in s\}$ are sampling design weights. Finally, the $c_i$'s ($>0$) in the PEML estimator are obtained by maximizing $\sum_{i\in s}d(i,s)\log(c_i)$ subject to $\sum_{i\in s}c_i$=$1$ and $\sum_{i\in s}c_i(X_i-\overline{X})$=$0$. Following \cite{MR1707846}, we consider both the GREG and the PEML estimators with $d(i,s)$=$(N\pi_i)^{-1}$ under SRSWOR, LMS sampling design and any HE$\pi$PS sampling design, and with $d(i,s)$=$G_i/NX_i$ under RHC sampling design.
\par
 
Let us denote the HT, the RHC, the H\'ajek, the ratio, the product, the GREG and the PEML estimators of population means of $h(y)$ by $\hat{\overline{h}}_{HT}$, $\hat{\overline{h}}_{RHC}$, $\hat{\overline{h}}_{H}$,  $\hat{\overline{h}}_{RA}$, $\hat{\overline{h}}_{PR}$, $\hat{\overline{h}}_{GREG}$ and $\hat{\overline{h}}_{PEML}$, respectively. Now, we give the proofs of Theorems \ref{cor 4}, \ref{thm 2} and \ref{Cor 3}. The proofs of Remark \ref{rem 2} and Theorems \ref{thm 5}, \ref{thm 6}, \ref{thm 4} and \ref{thm 3} are given in Section S$3$ of the supplement.
\begin{proof}[\textbf{Proof of Theorem \ref{cor 4}}]
Let us consider SRSWOR  and  LMS sampling design. It follows from  $(i)$ in  Lemma S$6$ in the supplement that $\sqrt{n}(\hat{\overline{h}}-\overline{h})\xrightarrow{\mathcal{L}}N(0,\Gamma)$ as $\nu\rightarrow\infty$ for some p.d. matrix $\Gamma$, when $\hat{\overline{h}}$ is one of $\hat{\overline{h}}_{HT}$, $\hat{\overline{h}}_{H}$, $\hat{\overline{h}}_{RA}$, $\hat{\overline{h}}_{PR}$, and $\hat{\overline{h}}_{GREG}$ with $d(i,s)$=$(N\pi_i)^{-1}$ under any of these sampling designs. Now, note that $\max_{i\in s}|X_i-\overline{X}|$=$o_p(\sqrt{n})$, and $\sum_{i\in s}\pi_i^{-1}(X_i-\overline{X})/\sum_{i\in s}\pi_i^{-1}(X_i-\overline{X})^2$=$O_p(1/\sqrt{n})$ as $\nu\rightarrow\infty$ under the above sampling designs (see Lemma S$8$ in the supplement). Then, by applying Theorem $1$ of \cite{MR1707846} to each real valued coordinate of $\hat{\overline{h}}_{PEML}$ and $\hat{\overline{h}}_{GREG}$, we get $\sqrt{n}(\hat{\overline{h}}_{PEML}-\hat{\overline{h}}_{GREG})$=$o_p(1)$ as $\nu\rightarrow\infty$ for $d(i,s)$=$(N\pi_i)^{-1}$ under these sampling designs. This implies that $\hat{\overline{h}}_{PEML}$ and $\hat{\overline{h}}_{GREG}$ with $d(i,s)$=$(N\pi_i)^{-1}$ have the same asymptotic distribution. Therefore, if $\hat{\overline{h}}$ is one of $\hat{\overline{h}}_{HT}$, $\hat{\overline{h}}_{H}$, $\hat{\overline{h}}_{RA}$, $\hat{\overline{h}}_{PR}$, and $\hat{\overline{h}}_{GREG}$ and $\hat{\overline{h}}_{PEML}$ with $d(i,s)$=$(N\pi_i)^{-1}$, we have
\begin{equation}\label{eq 4}
\sqrt{n}(g(\hat{\overline{h}})-g(\overline{h}))\xrightarrow{\mathcal{L}}N(0,\Delta^2)\text{ as }\nu\rightarrow\infty
\end{equation}
under any of the above-mentioned sampling designs for some $\Delta^2>0$ by the delta method and the condition $\nabla g(\mu_0)\neq 0$ at $\mu_0$=$\lim_{\nu\rightarrow\infty}\overline{h}$. It can be shown from the proof of  $(i)$ in  Lemma S$6$ in the supplement that $\Delta^2$=$\nabla g(\mu_0) \Gamma_1$ $(\nabla g(\mu_0))^T$, where $\Gamma_1$=$\lim_{\nu\rightarrow\infty}nN^{-2}\times$ $\sum_{i=1}^{N}(\textbf{V}_i-\textbf{T}\pi_i)^T(\textbf{V}_i-\textbf{T}\pi_i)(\pi_i^{-1}-1)$. It can also be shown from Table $1$ in the supplement that under each of the above sampling designs, $\textbf{V}_i$ in $\Gamma_1$ is $h_i$ or $h_i-\overline{h}$ or $h_i-\overline{h}X_i/\overline{X}$ or $h_i+\overline{h}X_i/\overline{X}$ or $h_i-\overline{h}-S_{xh}(X_i-\overline{X})/S_x^2$ if $\hat{\overline{h}}$ is $\hat{\overline{h}}_{HT}$ or $\hat{\overline{h}}_{H}$ or $\hat{\overline{h}}_{RA}$ or $\hat{\overline{h}}_{PR}$, or $\hat{\overline{h}}_{GREG}$ with $d(i,s)$=$(N\pi_i)^{-1}$, respectively.
\par

Now, by  $(i)$ in  Lemma S$7$ in the supplement, we have  
\begin{equation}\label{eq 1}
\sigma^2_{1}=\sigma^2_{2}=(1-\lambda)\lim_{\nu\rightarrow\infty}\sum_{i=1}^N(A_i-\bar{A})^2/N. 
\end{equation}
where $\sigma^2_1$ and $\sigma^2_2$ are as defined in the statement of Lemma S$7$, and $A_i$=$ \nabla g(\mu_0) \textbf{V}_i^T$ for different choices of $\textbf{V}_i$ mentioned in the preceding paragraph. Note that $g(\hat{\overline{h}}_{GREG})$ and $g(\hat{\overline{h}}_{PEML})$ have the same asymptotic distribution under each of SRSWOR and LMS sampling design since $\sqrt{n}(\hat{\overline{h}}_{PEML}-\hat{\overline{h}}_{GREG})$=$o_p(1)$ for $\nu\rightarrow\infty$ under these sampling designs as pointed out earlier in this proof. Further, \eqref{eq 1} implies that $g(\hat{\overline{h}}_{GREG})$ with $d(i,s)$=$(N\pi_i)^{-1}$ has the same asymptotic MSE under SRSWOR and LMS sampling design. Thus $g(\hat{\overline{h}}_{GREG})$ and $g(\hat{\overline{h}}_{PEML})$ with $d(i,s)$=$(N\pi_i)^{-1}$ under SRSWOR and LMS sampling design form class $1$ in Table \ref{table 1.2}.
\par

Next, \eqref{eq 1} yields that $g(\hat{\overline{h}}_{HT})$ has the same asymptotic MSE under SRSWOR and LMS sampling design. It also follows from \eqref{eq 1} that $g(\hat{\overline{h}}_{H})$ has the same asymptotic MSE under SRSWOR and LMS sampling design.  Now, note that $g(\hat{\overline{h}}_{HT})$ and $g(\hat{\overline{h}}_{H})$ coincide under SRSWOR. Thus $g(\hat{\overline{h}}_{HT})$ under SRSWOR, and $g(\hat{\overline{h}}_{HT})$ and  $g(\hat{\overline{h}}_{H})$ under LMS sampling design form class $2$ in Table \ref{table 1.2}. 
\par

Next, \eqref{eq 1} implies that $g(\hat{\overline{h}}_{RA})$ has the same asymptotic MSE under SRSWOR and LMS sampling design. Further, \eqref{eq 1} implies that $g(\hat{\overline{h}}_{PR})$ has the same asymptotic MSE under SRSWOR and LMS sampling design. Thus $g(\hat{\overline{h}}_{RA})$ under SRSWOR and LMS sampling design forms class $3$ in Table \ref{table 1.2}, and $g(\hat{\overline{h}}_{PR})$ under those sampling designs forms class $4$ in Table \ref{table 1.2}.  This completes the proof of Theorem \ref{cor 4}.
\end{proof}
\begin{proof}[\textbf{Proof of Theorem \ref{thm 2}}] 
 Note that C\ref{ass 2} and C\ref{ass B2} hold \textit{a.s.} $[\mathbb{P}]$ since C\ref{ass C1} holds and $E_{\mathbb{P}}(\epsilon_i)^4<\infty$. Also, note that C\ref{ass 3} holds \textit{a.s.} $[\mathbb{P}]$ under SRSWOR and LMS sampling design (see Lemma S$2$ in the supplement). Then, under the above sampling designs, conclusions of  Theorems \ref{cor 4} and \ref{thm 6}  hold \textit{a.s.} $[\mathbb{P}]$ for $d$=$p$=$1$, $h(y)$=$y$ and $g(s)$=$s$. Note that $W_i$=$\nabla g(\overline{h})h_i^T$=$Y_i$. Also, note that the $\Delta^2_i$'s in Table \ref{table 1.3} can be expressed in terms of superpopulation moments of $(Y_i,X_i)$ \textit{a.s.} $[\mathbb{P}]$ by SLLN since $E_{\mathbb{P}}(\epsilon_i)^4<\infty$. Recall from the beginning of Section \ref{subsec 4.1} that we have taken $\sigma_x^2$=$1$. Then, we have $\Delta^2_2-\Delta^2_1$=$(1-\lambda)\sigma^2_{xy}$, $\Delta^2_3-\Delta^2_1$=$(1-\lambda)(\sigma_{xy}-E_{\mathbb{P}}(Y_i)/\mu_1)^2$ and $\Delta^2_4-\Delta^2_1$=$(1-\lambda)(\sigma_{xy}+E_{\mathbb{P}}(Y_i)/\mu_1)^2$ \textit{a.s.} $[\mathbb{P}]$, where $\mu_1$=$E_{\mathbb{P}}(X_i)$ and $\sigma_{xy}$=$cov_{\mathbb{P}}(X_i,Y_i)$. Hence, $\Delta^2_1 < \Delta^2_i$ \textit{a.s.} $[\mathbb{P}]$ for $i$=$2,3,4$.
\par

 Next consider the case of  $0\leq \lambda< E_{\mathbb{P}}(X_i)/b$.  Note that  $n\gamma\rightarrow c$ as $\nu\rightarrow\infty$ for some $c\geq 1-\lambda$ by Lemma S$1$ in the supplement. Also, note that \textit{a.s.} $[\mathbb{P}]$, C\ref{ass 1} holds in the case of RHC sampling design and C\ref{ass 3} holds in the case of any HE$\pi$PS sampling design (see Lemma S$2$ in the supplement).  Then, under RHC and any HE$\pi$PS sampling designs, conclusions of  Theorems \ref{thm 5} and \ref{thm 6}  hold \textit{a.s.} $[\mathbb{P}]$ for $d$=$p$=$1$, $h(y)$=$y$ and $g(s)$=$s$. Further, we have  $\Delta^2_5-\Delta^2_1$=$\big\{E_{\mathbb{P}}\big(Y_i-E_{\mathbb{P}}(Y_i))^2\big(\mu_1/X_i-\lambda\big)-\mu_1^2\sigma_{xy}\big(\sigma_{xy} cov_{\mathbb{P}}(X_i,1/X_i)-2cov_{\mathbb{P}}(Y_i,1/X_i)\big)+\lambda\sigma^2_{xy}\big\}-(1-\lambda)\big\{\sigma^2_y-\sigma_{xy}^2\big\}$, $\Delta^2_6-\Delta^2_5$= $E_{\mathbb{P}}\big(Y_i^2\big(\mu_1/X_i-\lambda\big)\big)-\big\{\lambda E_{\mathbb{P}}(Y_i X_i)-E_{\mathbb{P}}(Y_i)\mu_1\big\}^2/\chi \mu_1-\big\{E_{\mathbb{P}}\big(Y_i-E_{\mathbb{P}}(Y_i)-\sigma_{xy}(X_i-\mu_1)\big)^2\big(\mu_1/X_i-\lambda\big)\big\}$, $\Delta^2_7-\Delta^2_5$=$\big\{\mu_1^2\sigma_{xy}\big(\sigma_{xy} cov_{\mathbb{P}}(X_i,1/X_i)-2cov_{\mathbb{P}}(Y_i,1/X_i)\big)-\lambda\sigma^2_{xy}-\lambda^2 \sigma^2_{xy}/\mu_1\chi\big\}$, $\Delta^2_8-\Delta^2_1$=$c\big\{\mu_1E_{\mathbb{P}}(Y_i-E_{\mathbb{P}}(Y_i))^2/X_i-\mu_1^2\sigma_{xy}(\sigma_{xy}cov_{\mathbb{P}}(X_i,$ $1/X_i)-2cov_{\mathbb{P}}(Y_i,1/X_i))\big\}-(1-\lambda)\big\{\sigma^2_y-\sigma_{xy}^2\big\}$ and $\Delta^2_9-\Delta^2_1$=$c\big\{\mu_1 E_{\mathbb{P}}(Y_i^2/X_i)-E_{\mathbb{P}}^2(Y_i)\big\}-(1-\lambda)\big\{\sigma^2_y-\sigma_{xy}^2\big\}$  \textit{a.s.} $[\mathbb{P}]$, where $\sigma^2_y$=$var_{\mathbb{P}}(Y_i)$,  $\chi$=$\mu_1-\lambda (\mu_2/\mu_1)$ and $\mu_2$=$E_{\mathbb{P}}(X_i)^2$. Here, we note that $\chi$=$E_{\mathbb{P}}\big(X_i^2(\mu_1/X_i-\lambda)\big)/\mu_1>0$ because C\ref{ass C1} holds and C\ref{ass 4} holds with $0\leq \lambda< E_{\mathbb{P}}(X_i)/b$.  Moreover, from the linear model set up, we can show that  $\Delta^2_5-\Delta^2_1$=$\sigma^2(\mu_1\mu_{-1}-1)>0$, $\Delta^2_6-\Delta^2_5$=$ E_{\mathbb{P}}\big\{(\alpha +\beta X_i)-\chi^{-1}X_i(\alpha+\beta\mu_1-\lambda\alpha-\lambda\beta\mu_2/\mu_1)\big\}^2\big\{\mu_1/X_i-\lambda\big\}\geq 0$, $\Delta^2_7-\Delta^2_5$=$\beta^2 E_{\mathbb{P}}\big\{(X_i-\mu_1)-\lambda\chi^{-1}X_i(\mu_1-\mu_2/\mu_1)\big\}^2\big\{\mu_1/X_i-\lambda\big\}\geq 0$, $\Delta^2_8-\Delta^2_1$=$\sigma^2\big(c\mu_1\mu_{-1}-(1-\lambda)\big)\geq c\sigma^2(\mu_1\mu_{-1}-1)> 0$ and $\Delta^2_9-\Delta^2_1$=$\sigma^2\big(c\mu_1 \mu_{-1}-(1-\lambda)\big)+c\alpha^2(\mu_1\mu_{-1}-1) > 0$  \textit{a.s.} $[\mathbb{P}]$, where $\sigma^2$=$E_{\mathbb{P}}(\epsilon_i)^2$.  Note that $\Delta^2_6-\Delta^2_5\geq 0$ and $\Delta^2_7-\Delta^2_5\geq 0$ because C\ref{ass C1} holds and C\ref{ass 4} holds with $0\leq \lambda< E_{\mathbb{P}}(X_i)/b$.   Therefore, $\Delta^2_1< \Delta^2_i$ \textit{a.s.} $[\mathbb{P}]$ for $i$=$2,\ldots, 9  $.  This completes the proof of Theorem \ref{thm 2}.
\end{proof}
\begin{proof}[\textbf{Proof of Theorem \ref{Cor 3}}]
 The proof follows in a straightforward way from Theorem \ref{thm 2}.
\end{proof}

\vskip .55cm
\noindent
Anurag Dey\\
{\it Indian Statistical Institute, Kolkata}
\vskip 1.3pt
\noindent
E-mail: deyanuragsaltlake64@gmail.com
\vskip 1.3pt

\noindent
Probal Chaudhuri\\
{\it Indian Statistical Institute, Kolkata}
\vskip 1.3pt
\noindent
E-mail:probalchaudhuri@gmail.com
\end{document}




\title{\textbf{Supplementary material for ``A comparison of estimators of mean and its functions in finite populations"}}
\author{Anurag Dey and Probal Chaudhuri\\
\textit{Indian Statistical Institute, Kolkata}}
\maketitle


\begin{abstract}
In this supplement, we discuss conditions C$1$ through C$4$ from the main paper and demonstrate situations, where these conditions hold. Then, we state and prove some additional mathematical results. We also give the proofs of Remark $1$ and Theorems $2$, $3$, $6$ and $7$ of the main text. The biased estimators considered in the main paper are then compared empirically with their bias-corrected versions based on jackknifing in terms of MSE. Finally, we provide the numerical results related to the analysis based on both synthetic and real data.\\ 
\end{abstract}

\textbf{Keywords and phrases:} Asymptotic normality, Equivalence classes of estimators, High entropy sampling designs, Inclusion probability, Linear regression model, Rejective sampling design, Relative efficiency, Superpopulation models.

\section{Discussion of conditions and related results}\label{sec s1}
In this section, we demonstrate some situations, when conditions C$1$ through C$4$ in the main article hold.  Before that we prove and state the following lemma. Recall from the paragraph following C$2$ in the main text that $\gamma$=$\sum_{i=1}^n N_i(N_i-1)/N(N-1)$ with $N_i$ being the size of the $i^{th}$ group formed randomly in RHC sampling design. 
 
\begin{lemma}\label{lem 6}
Suppose that $C0$ holds. Then, $n\gamma\rightarrow c$ for some $c\geq 1-\lambda>0$ as $\nu\rightarrow\infty$, where $\lambda$ is as in $C0$.
\end{lemma}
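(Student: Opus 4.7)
The plan is to exploit the explicit structure of the group sizes $N_i$ in the RHC design to obtain a closed-form expression for $n\gamma$ and then pass to the limit using C0 (understood to give $n/N \to \lambda \in [0,1)$ as $\nu \to \infty$). Writing $N = qn + r$ with $q = \lfloor N/n \rfloor$ and $r \in \{0, 1, \ldots, n-1\}$, in the standard RHC allocation $r$ groups have size $q+1$ and $n-r$ have size $q$. A direct computation would then give
\[
\sum_{i=1}^{n} N_i(N_i - 1) = r(q+1)q + (n-r)q(q-1) = q(N - n + r),
\]
and multiplying $\gamma$ by $n$ and using $nq = N - r$ would produce the key identity
\[
n\gamma = \frac{(N - r)(N - n + r)}{N(N-1)} = \frac{N-n}{N-1} + \frac{r(n-r)}{N(N-1)}.
\]

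Next, I would pass to the limit. The first term on the right-hand side of the identity converges to $1 - \lambda$ by C0. The second term is nonnegative and bounded above by $n^2/[4N(N-1)]$, hence bounded. Since $r/n \in [0,1)$, a Bolzano-Weierstrass step (or a regularity property built into C0) would yield a limit $\rho \in [0,1]$ of $r/n$, giving $n\gamma \to c := (1-\lambda) + \rho(1-\rho)\lambda^2$. Clearly $c \geq 1 - \lambda$, and since $\lambda < 1$ under C0, $c \geq 1 - \lambda > 0$. As a sanity check, the lower bound $c \geq 1 - \lambda$ also follows directly from the Cauchy--Schwarz inequality $\sum_i N_i^2 \geq N^2/n$, which yields $n\gamma \geq (N-n)/(N-1)$ whose right-hand side tends to $1 - \lambda$ without any knowledge of $r$.

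The main computational step is the combinatorial identity for $\sum_i N_i(N_i-1)$ under the balanced RHC allocation, which is routine once $N = qn + r$ is written out. The conceptual obstacle I anticipate is ensuring that $r(n-r)/[N(N-1)]$ has an actual limit rather than merely a $\liminf$, since $r/n$ need not converge automatically along the sequence indexed by $\nu$; I expect this to be handled either by invoking the full strength of C0 or by restricting to a subsequence. The Cauchy--Schwarz route sidesteps this concern entirely for establishing the bound $c \geq 1 - \lambda$, which is the substantive content of the lemma.
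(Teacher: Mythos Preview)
Your approach is correct and more streamlined than the paper's. The paper proceeds by exhaustive case analysis on $\lambda$: (i) $\lambda=0$, (ii) $\lambda>0$ with $\lambda^{-1}$ an integer (split further into three sub-cases according to whether $N/n$ is an integer or a non-integer lying above or below $\lambda^{-1}$), and (iii) $\lambda>0$ with $\lambda^{-1}$ a non-integer; in each branch it substitutes into formula~(1) of the main text, computes the limit directly, and in case (iii) verifies $c>1-\lambda$ by factoring $c-(1-\lambda)=-(1-m\lambda)(1-(m+1)\lambda)$ with $m=\lfloor\lambda^{-1}\rfloor$. Your identity $n\gamma=(N-n)/(N-1)+r(n-r)/[N(N-1)]$ collapses all of this into one line and delivers the bound $c\ge 1-\lambda$ for free from nonnegativity of the second term. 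The convergence gap you flag is real but closes easily without Bolzano--Weierstrass: writing $r/n=\{N/n\}$ (fractional part), either $\lambda^{-1}\notin\mathbb{Z}$ and $\{N/n\}\to\{\lambda^{-1}\}$ by continuity of the fractional part off the integers, or $\lambda^{-1}\in\mathbb{Z}$ and $\{N/n\}(1-\{N/n\})\to 0$ because $\{N/n\}$ is eventually trapped near $0$ or near $1$, or $\lambda=0$ and the second term is $O(n^2/N^2)\to 0$; in every case the second term has a genuine limit. Your closed form $c=(1-\lambda)+\rho(1-\rho)\lambda^2$ with $\rho=\{\lambda^{-1}\}$ matches the paper's expression after the substitution $\lambda\lfloor\lambda^{-1}\rfloor=1-\lambda\rho$. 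What your route buys is a one-shot inequality and a transparent limit; what the paper's case split buys is that $\lfloor N/n\rfloor$ is eventually constant in each sub-case, so no separate argument about fractional parts is needed.
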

\begin{proof}
Let us first consider the case of $\lambda$=$0$. Note that 
\begin{align}\label{eq 2}
\begin{split}
&n (N/n-1)(N-n)/(N(N-1))\leq n\gamma\leq\\
& n (N/n+1)(N-n)/(N(N-1))
\end{split}
\end{align} 
by $(1)$ in Section $2$ of the main text. Moreover, $n (N/n+1)(N-n)/(N(N-1))$=$(1+n/N)(N-n)/(N-1)\rightarrow 1$ and $n (N/n-1)(N-n)/(N(N-1))$=$(1-n/N)(N-n)/(N-1)\rightarrow 1$ as $\nu\rightarrow\infty$ because C$0$ holds and $\lambda$=$0$. Thus we have $n\gamma\rightarrow 1$ as $\nu\rightarrow\infty$ in this case.
\par

Next, consider the case, when $\lambda>0$ and $\lambda^{-1}$ is an integer. Here, we consider the following sub-cases. Let us first consider the sub-case, when $N/n$ is an integer for all sufficiently large $\nu$. Then, by $(1)$, we have $n\gamma$=$(N-n)/(N-1)$ for all sufficiently large $\nu$. Now, since C$0$ holds, we have 
\begin{equation}
(N-n)/(N-1)\rightarrow 1-\lambda\text{ as }\nu\rightarrow\infty.
\end{equation}
\par

Further, consider the sub-case, when $N/n$ is a non-integer and $N/n-\lambda^{-1}\geq 0$ for all sufficiently large $\nu$. Then by $(1)$ in Section $2$ of the main text, we have 
\begin{equation}\label{eq 3}
n\gamma=(N/(N-1))(n/N) \lfloor N/n\rfloor \big(2-\big((n/N) \lfloor N/n \rfloor \big)-(n/N)\big) 
\end{equation}
for all sufficiently large $\nu$. Now, since C$0$ holds, we have $0\leq N/n-\lambda^{-1}<1$ for all sufficiently large $\nu$. Then, $\lfloor N/n \rfloor$=$\lambda^{-1}$ for all sufficiently large $\nu$, and hence
\begin{equation}\label{eq 4}
(N/(N-1)) (n/N) \lfloor N/n \rfloor \bigg(2-\big((n/N) \lfloor N/n\rfloor\big)-(n/N)\bigg)\rightarrow 1-\lambda
\end{equation}
as $\nu\rightarrow\infty$.
\par

Next, consider the sub-case, when $N/n$ is a non-integer and $N/n-\lambda^{-1}< 0$ for all sufficiently large $\nu$. Then, the result in \eqref{eq 3} holds by $(1)$, and $-1\leq N/n-\lambda^{-1}<0$ for all sufficiently large $\nu$ by C$0$. Therefore, $\lfloor N/n \rfloor$=$\lambda^{-1}-1$ for all sufficiently large $\nu$, and hence the result in \eqref{eq 4} holds. Thus, in the case of $\lambda>0$ and $\lambda^{-1}$ being an integer, $n\gamma$ converges to $1-\lambda$ as $\nu\rightarrow\infty$ through all the sub-sequences, and hence $n\gamma\rightarrow 1-\lambda$ as $\nu\rightarrow\infty$. Thus we have $c$=$1-\lambda$ in this case.
\par
\vspace{.2cm}

Finally, consider the case, when $\lambda>0$, and $\lambda^{-1}$ is a non-integer. Then, $N/n$ must be a non-integer for all sufficiently large $\nu$, and hence $n\gamma$=$(N/(N-1)) (n/N) \lfloor N/n \rfloor \big(2-\big((n/N) \lfloor N/n \rfloor \big)-(n/N)\big)$ for all sufficiently large $\nu$ by $(1)$ in Section $2$ of the main text. Note that in this case, $N/n-\lfloor \lambda^{-1} \rfloor\rightarrow \lambda^{-1}-\lfloor \lambda^{-1} \rfloor\in(0,1)$ as $\nu\rightarrow\infty$ by C$0$. Therefore, $\lfloor \lambda^{-1} \rfloor<N/n<\lfloor \lambda^{-1} \rfloor+1$ for all sufficiently large $\nu$, and hence $\lfloor N/n \rfloor$=$\lfloor \lambda^{-1} \rfloor$ for all sufficiently large $\nu$. Thus $n\gamma\rightarrow \lambda \lfloor \lambda^{-1} \rfloor (2-\lambda \lfloor \lambda^{-1} \rfloor-\lambda)$ as $\nu\rightarrow\infty$ by C$0$. Now, if $m$=$ \lfloor \lambda^{-1} \rfloor$ and $\lambda^{-1}$ is a non-integer, then $(m+1)^{-1}<\lambda<m^{-1}$. Therefore, $\lambda \lfloor \lambda^{-1}\rfloor (2-\lambda \lfloor\lambda^{-1}\rfloor-\lambda)-1+\lambda$=$-\big(1-(2m+1)\lambda+m(m+1)\lambda^2\big)$=$-(1-m\lambda)(1-(m+1)\lambda)>0$. Thus we have $c$=$\lambda \lfloor \lambda^{-1} \rfloor (2-\lambda \lfloor \lambda^{-1} \rfloor-\lambda)>1-\lambda$ in this case. This completes the proof of the Lemma.
\end{proof}
Next, recall $\{\textbf{V}_i\}_{i=1}^N$ from the paragraph preceding the condition C$3$ and $b$ from the condition C$5$ in the main text.  Let us define $\Sigma_1$=$nN^{-2}\sum_{i=1}^{N}(\textbf{V}_i-\textbf{T}\pi_i)^T(\textbf{V}_i-\textbf{T}\pi_i)(\pi_i^{-1}-1)$ and $\Sigma_2$=$n\gamma\overline{X}N^{-1}\sum_{i=1}^{N}(\textbf{V}_i-X_i\overline{\textbf{V}}/\overline{X})^T(\textbf{V}_i-X_i\overline{\textbf{V}}/\overline{X})/ X_i$, where $\textbf{T}$=$\sum_{i=1}^N \textbf{V}_i(1-\pi_i)/\sum_{i=1}^N \pi_i(1-\pi_i)$, the $\pi_i$'s are inclusion probabilities and $\overline{\textbf{V}}$=$\sum_{i=1}^N \textbf{V}_i/N$. Now, we state the following lemma.

\begin{lemma}\label{lem s1}
$(i)$ Suppose that $C0$ and C$5$ hold, and $\{(h(Y_i),X_i):1\leq i \leq N\}$ are generated from a superpopulation distribution $\mathbb{P}$ with $E_{\mathbb{P}}||h(Y_i)||^4<\infty$. Then, C$1$, C$2$ and C$4$ hold \textit{a.s.} $[\mathbb{P}]$.\\ 
$(ii)$ Further, if $C0$ and C$5$ hold, and $E_{\mathbb{P}}||h(Y_i)||^2<\infty$, then C$3$ holds \textit{a.s.} $[\mathbb{P}]$ under SRSWOR and LMS sampling design. Moreover, if $C0$ holds with $0\leq\lambda<E_{\mathbb{P}}(X_i)/b$, C$5$ holds, and $E_{\mathbb{P}}||h(Y_i)||^2<\infty$, then C$3$ holds \textit{a.s.} $[\mathbb{P}]$ under any $\pi$PS sampling design.
\end{lemma}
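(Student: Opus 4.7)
My plan is to exploit that under the superpopulation $\mathbb{P}$ the pairs $(h(Y_i), X_i)$, $i\geq 1$, are i.i.d., so that population averages converge almost surely to their superpopulation expectations by Kolmogorov's SLLN whenever the required moments are finite. I will couple this with the boundedness of $X_i$ supplied by C$5$ and with Lemma~\ref{lem 6} for the auxiliary limit $n\gamma\to c$, which enters the RHC quantity $\Sigma_2$.

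For part $(i)$, I would reduce each of C$1$, C$2$ and C$4$ to a finite collection of population averages of products of components of $h(Y_i)$, $X_i$ and $X_i^{-1}$, possibly scaled by $n/N\to\lambda$ or $n\gamma\to c$. Under $E_{\mathbb{P}}\|h(Y_i)\|^4<\infty$ together with C$5$, every such average has summands with finite variance, so SLLN gives its almost sure convergence to the corresponding expectation. For the Lindeberg-type piece present in C$4$, I would apply a standard $L^2$-truncation argument: the tail contribution $N^{-1}\sum_{i=1}^N\|h(Y_i)\|^2\mathbf{1}\{\|h(Y_i)\|>\epsilon\sqrt{N}\}$ is bounded by $\epsilon^{-2}N^{-1}E_{\mathbb{P}}\|h(Y_i)\|^4$, which vanishes as $\nu\to\infty$.

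For part $(ii)$, C$3$ asks that $\Sigma_1$ and $\Sigma_2$ converge to positive definite limits. Under SRSWOR, $\pi_i=n/N$ reduces $\Sigma_1$ to a multiple of $N^{-1}\sum_{i=1}^N(\textbf{V}_i-\overline{\textbf{V}})^T(\textbf{V}_i-\overline{\textbf{V}})$, which by SLLN converges a.s.\ to a positive multiple of $\mathrm{Var}_{\mathbb{P}}(\textbf{V}_i)$; positive definiteness follows from non-degeneracy of $\textbf{V}_i$. The LMS case reduces analogously, because its inclusion probabilities differ from SRSWOR ones only by a factor tending to a strictly positive constant. Under $\pi$PS, $\pi_i=nX_i/\sum_j X_j$; since $N^{-1}\sum_j X_j\to E_{\mathbb{P}}(X_i)$ a.s.\ and $X_i\leq b$ by C$5$, the constraint $\lambda<E_{\mathbb{P}}(X_i)/b$ forces $\pi_i<1$ for all $i$ and all sufficiently large $\nu$ almost surely, after which SLLN applied to $N^{-1}\sum$ of the relevant polynomials in $\textbf{V}_i$, $X_i$ and $X_i^{-1}$ delivers almost sure convergence of $\Sigma_1$ to a deterministic positive definite limit.

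The main obstacle I anticipate is the $\pi$PS analysis. Beyond justifying feasibility of the inclusion probabilities (which is precisely where $\lambda<E_{\mathbb{P}}(X_i)/b$ is used), one has to establish joint convergence of the numerator and denominator of $\textbf{T}=\sum \textbf{V}_i(1-\pi_i)/\sum\pi_i(1-\pi_i)$, separately handle the degenerate case $\lambda=0$ (where $\pi_i\to 0$ and a careful rescaling of both sums is needed), and finally invoke a Cauchy--Schwarz-type inequality to show that the limiting $\Sigma_1$ is not collapsed by the subtraction of $\textbf{T}\pi_i$; the analogous step for $\Sigma_2$ will use the same argument with $X_i\overline{\textbf{V}}/\overline{X}$ in place of $\textbf{T}\pi_i$.
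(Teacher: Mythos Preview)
Your overall strategy---SLLN on population averages, C5 for boundedness of $X_i$, and Lemma S\ref{lem 6} for $n\gamma\to c$---is exactly the paper's. Two points of divergence are worth flagging.

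The first is a misreading of the condition. You anticipate a Lindeberg-type piece inside C4 and prepare a truncation argument for it. In the paper's framework C4 asks only that $\Sigma_2$ converge to a positive definite matrix; the proof computes $\lim_{\nu\to\infty}\Sigma_2$ explicitly (via SLLN and $n\gamma\to c$) for each of the five relevant choices of $\textbf{V}_i$ and then observes that the limits are p.d.\ from the nondegeneracy supplied by C5. No tail truncation enters here; the Lindeberg-style verification appears only later, in the proof of Lemma S\ref{lem 4}, and there it is a \emph{consequence} of C1, C2 and C4 rather than a part of C4 itself. So your truncation step is unnecessary for this lemma.

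The second is a matter of route. For positive definiteness of the $\pi$PS limit you plan a Cauchy--Schwarz argument to rule out collapse from subtracting $\textbf{T}\pi_i$, and you treat the hypothesis $\lambda<E_{\mathbb{P}}(X_i)/b$ primarily as a feasibility condition ensuring $\pi_i<1$. The paper instead writes the limit of $\Sigma_1$ directly as an expectation of rank-one nonnegative matrices weighted by $E_{\mathbb{P}}(X_i)/X_i-\lambda$; since $X_i\leq b$ and $\lambda<E_{\mathbb{P}}(X_i)/b$, this weight is strictly positive, and positive definiteness follows at once from the nondegeneracy in C5. Your feasibility reading is the finite-population shadow of the same inequality, but what the paper actually invokes is the positivity of the limiting weight. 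Your Cauchy--Schwarz route would also succeed, just with more work.
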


\begin{proof}
 As before, for simplicity, let us write $h(Y_i)$ as $h_i$. Under the conditions C$5$ and $E_{\mathbb{P}}||h(Y_i)||^4<\infty$, C$1$ holds \textit{a.s.} $[\mathbb{P}]$ by SLLN. Also, under C$5$, C$2$ holds \textit{a.s.} $[\mathbb{P}]$. Next, by SLLN, $\lim_{\nu\rightarrow\infty}\Sigma_2$=$ c\color{black} E_{\mathbb{P}}(X_i)E_{\mathbb{P}}[(h_i-(E_{\mathbb{P}}(X_i))^{-1}X_i$ $E_{\mathbb{P}}(h_i))^T(h_i-(E_{\mathbb{P}}(X_i))^{-1} X_i E_{\mathbb{P}}(h_i))$ $X_i^{-1}]$ \textit{a.s.} $[\mathbb{P}]$ for $\textbf{V}_i$=$h_i$, $h_i-\overline{h}X_i/\overline{X}$ and $h_i+\overline{h}X_i/\overline{X}$ because $n\gamma\rightarrow  c\color{black}$ as $\nu\rightarrow\infty$ by Lemma  S\ref{lem 6}.  Similarly, $\lim_{\nu\rightarrow\infty}\Sigma_2$=$ c\color{black} E_{\mathbb{P}}(X_i)E_{\mathbb{P}}[(h_i$ $-E_{\mathbb{P}}(h_i))^T(h_i-E_{\mathbb{P}}(h_i))/X_i]$ \textit{a.s.} $[\mathbb{P}]$ for $\textbf{V}_i$=$h_i-\overline{h}$, and $\lim_{\nu\rightarrow\infty}\Sigma_2$=$ c\color{black} E_{\mathbb{P}}(X_i)E_{\mathbb{P}}[$ $(h_i-E_{\mathbb{P}}(h_i)-C_{xh}(X_i-E_{\mathbb{P}}(X_i)))^T(h_i-E_{\mathbb{P}}(h_i)-C_{xh}(X_i-E_{\mathbb{P}}(X_i)))/X_i]$ \textit{a.s.} $[\mathbb{P}]$ for $\textbf{V}_i$=$h_i-\overline{h}-S_{xh}(X_i-\overline{X})/S^2_x$. Here, $C_{xh}$=$(E_{\mathbb{P}}(h_i X_i)-E_{\mathbb{P}}(h_i)E_{\mathbb{P}}(X_i))/$ $(E_{\mathbb{P}}(X_i)^2-(E_{\mathbb{P}}(X_i))^2)$. Note that the above limits are p.d. matrices because C$5$ holds. Therefore, C$4$ holds \textit{a.s.} $[\mathbb{P}]$.  This completes the proof of \textsl{(i)} in Lemma S\ref{lem s1}.
\par

 Next, note that $\Sigma_1$=$(1-n/N)(\sum_{i=1}^N\textbf{V}_i^T\textbf{V}_i/N-\overline{\bf{V}}^T\overline{\bf{V}})$ under SRSWOR. Then, C$3$ holds \textit{a.s.} $[\mathbb{P}]$ by directly applying SLLN. Under LMS sampling design, C$3$ can be shown to hold \textit{a.s.} $[\mathbb{P}]$ in the same way as the proof of the result $\sigma_1^2$=$\sigma_2^2$ in the proof of Lemma $2$ in the Appendix. Next, we have  $\lim_{\nu\rightarrow\infty}\Sigma_1$=$E_{\mathbb{P}}\big[ \big\{h_i+\chi^{-1}(E_{\mathbb{P}}(X_i))^{-1}X_i\big(\lambda E_{\mathbb{P}}(h_iX_i)-E_{\mathbb{P}}(h_i) E_{\mathbb{P}}(X_i)\big) \big\}^T \big\{h_i+\chi^{-1}(E_{\mathbb{P}}(X_i))^{-1}X_i\big(\lambda E_{\mathbb{P}}(h_iX_i)-E_{\mathbb{P}}(h_i) E_{\mathbb{P}}(X_i)\big) \big\}\big\{ E_{\mathbb{P}}(X_i)/X_i-\lambda\big\}\big]$  \textit{a.s.} $[\mathbb{P}]$ for $\textbf{V}_i$=$h_i$, $h_i-\overline{h}X_i/\overline{X}$ and $h_i+\overline{h}X_i/\overline{X}$ under any $\pi$PS sampling design (i.e., a sampling design with $\pi_i$=$nX_i/\sum_{i=1}^N X_i$) by SLLN because C$0$ and C$5$ hold, and $E_{\mathbb{P}}||h_i||^2<\infty$.  Here, $\chi$=$E_{\mathbb{P}}(X_i)-\lambda (E_{\mathbb{P}}(X_i)^2/E_{\mathbb{P}}(X_i))$.  Moreover, under any $\pi$PS sampling design, we have  $\lim_{\nu\rightarrow\infty}\Sigma_1$=$E_{\mathbb{P}}\big[ \big\{h_i-E_{\mathbb{P}}(h_i)+\lambda\chi^{-1}(E_{\mathbb{P}}(X_i))^{-1}X_i C_{xh} \big\}^T \big\{h_i-E_{\mathbb{P}}(h_i)+\lambda\chi^{-1}(E_{\mathbb{P}}(X_i))^{-1}X_i C_{xh} \big\}\times$ $\big\{ E_{\mathbb{P}}(X_i)/X_i-\lambda\big\}\big]$  \textit{a.s.} $[\mathbb{P}]$ for $\textbf{V}_i$=$h_i-\overline{h}$ and  $\lim_{\nu\rightarrow\infty}\Sigma_1$= $E_{\mathbb{P}}\big[\big\{h_i-E_{\mathbb{P}}(h_i)-C_{xh}(X_i-E_{\mathbb{P}}(X_i))\big\}^T \big\{h_i-E_{\mathbb{P}}(h_i)-C_{xh}(X_i-E_{\mathbb{P}}(X_i))\big\}\big\{ E_{\mathbb{P}}(X_i)/X_i-\lambda\big\}\big]$  \textit{a.s.} $[\mathbb{P}]$ for $\textbf{V}_i$=$h_i-\overline{h}-S_{xh}(X_i-\overline{X})/S^2_x$.  Note that the above limits are p.d. matrices because C$5$ holds and C$0$ holds with $0\leq\lambda<E_{\mathbb{P}}(X_i)/b$.  Therefore, C$3$ holds \textit{a.s.} $[\mathbb{P}]$ under any $\pi$PS sampling design. This completes the proof of \textsl{(ii)} in Lemma S\ref{lem s1}.
\end{proof}

\section{Additional mathematical details}\label{sec 1s}
In this section, we state and prove some technical results, which will be required to prove the theorems stated in the main text.
\begin{lemma}\label{lem 1}
Suppose that C$2$ holds. Then, LMS sampling design is a high entropy sampling design. Moreover, under LMS sampling design, there exist constants $L, L^{\prime}>0$ such that
\begin{equation}\label{eq 1}
L\leq \min_{1\leq i\leq N}(N\pi_{i}/n)\leq \max_{1\leq i\leq N}(N\pi_{i}/n)\leq L^{\prime}
\end{equation}
for all sufficiently large $\nu$ .
\end{lemma}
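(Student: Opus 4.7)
The plan is to dispose of the two assertions separately, handling the algebraic bound \eqref{eq 1} first. Under LMS, the first sampled unit is drawn with probability $p_i=X_i/\sum_{j=1}^N X_j$ proportional to the size variable, and the remaining $n-1$ units are drawn by SRSWOR from the remaining $N-1$. Conditioning on whether $i$ is the first draw yields
\[
\pi_i \;=\; p_i + (1-p_i)\,\frac{n-1}{N-1}
\;=\; \frac{n-1}{N-1} + \frac{N-n}{N-1}\cdot\frac{X_i}{N\bar X},
\]
so that
\[
\frac{N\pi_i}{n} \;=\; \frac{N(n-1)}{n(N-1)} + \frac{N-n}{n(N-1)}\cdot\frac{X_i}{\bar X}.
\]
Both coefficients are bounded above and bounded away from $0$ for all sufficiently large $\nu$ under C$0$, so the required two-sided bound reduces to a uniform-in-$(i,\nu)$ control of $X_i/\bar X$, which is what C$2$ is intended to provide.

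For the high entropy claim I would take the working definition (as adopted in the main text) to be $D(P_{LMS}\|R_\pi)\to 0$, where $R_\pi$ is the rejective (conditional Poisson) design with the same inclusion probabilities $\{\pi_i\}$. Using the same conditioning argument,
\[
P_{LMS}(s) \;=\; \frac{1}{\binom{N-1}{n-1}}\sum_{i\in s}p_i,
\]
and, since \eqref{eq 1} together with C$0$ (for $\lambda<1$) keep every $\pi_i$ uniformly in a compact subset of $(0,1)$, the Poisson parameters underlying $R_\pi$ are similarly well-controlled and standard asymptotic expansions for $R_\pi(s)$ become available.

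The hard part will be actually driving $D(P_{LMS}\|R_\pi)$ to $0$. The natural anchor is the equal-size case $X_i\equiv \bar X$: there $P_{LMS}$ coincides with SRSWOR, the matching $R_\pi$ is also SRSWOR, and the divergence vanishes identically. I would then argue perturbatively, bounding $\log\{P_{LMS}(s)/R_\pi(s)\}$ by a quantity whose $P_{LMS}$-expectation is controlled by the population spread of the $p_i$. The delicate steps are matching the combinatorial normalising constant of $R_\pi$ against $1/\binom{N-1}{n-1}$ and extracting enough cancellation to make the divergence $o(1)$; that is where the bulk of the work will sit.
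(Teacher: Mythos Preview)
Your treatment of the inclusion probability bound is essentially the paper's: from the same formula $\pi_i=(n-1)/(N-1)+((N-n)/(N-1))\,X_i/(N\overline{X})$ one reads off the two-sided bound directly from C$2$. (The paper in fact proves the sharper statement $\max_i|N\pi_i/n-1|\to 0$, but your version suffices for the lemma as stated.)

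The real issue is the high entropy part. You take the comparison rejective design to be $R_\pi$, the conditional Poisson design with the \emph{same} inclusion probabilities as LMS, and then concede that bounding $D(P_{LMS}\|R_\pi)$ is ``the hard part'' requiring delicate matching of the rejective normalising constant. But the definition used in the main text does not tie you to that choice: $P$ is high entropy provided $D(P\|R)\to 0$ for \emph{some} rejective design $R$. The paper simply takes $R$ to be SRSWOR, which is itself rejective, and then the ratio collapses to $P(s)/R(s)=\overline{x}/\overline{X}$ with $\overline{x}=n^{-1}\sum_{i\in s}X_i$ (this is exactly your formula $P_{LMS}(s)=\binom{N-1}{n-1}^{-1}\sum_{i\in s}p_i$ rewritten). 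Hence
\[
D(P\|R)=E_R\!\left[\frac{\overline{x}}{\overline{X}}\,\log\frac{\overline{x}}{\overline{X}}\right],
\]
which is controlled by $E_R[(\overline{x}/\overline{X}-1)^2]=(1-n/N)\,(N/(N-1))\,S_x^2/(n\overline{X}^2)=O(1/n)$ under C$2$; no asymptotics for conditional Poisson sampling are needed at all. Your perturbative program around $R_\pi$ may well be completable, but it is both unfinished and unnecessary: the whole difficulty disappears once you pick SRSWOR as the comparison design.
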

The condition \eqref{eq 1} was considered earlier in \cite{wang2011asymptotic}, \cite{MR3670194}, etc. However, the above authors did not discuss whether LMS sampling design satisfies \eqref{eq 1} or not. 
\begin{proof}
Suppose that $P(s)$ and $R(s)$ denote LMS sampling design and SRSWOR, respectively. Note that SRSWOR is a rejective sampling design. Then, $P(s)$=$(\overline{x}/\overline{X})/^NC_n$ and $R(s)$=$(^NC_n)^{-1}$, where $\overline{x}$=$\sum_{i\in s}X_i/n$ and $s\in\mathcal{S}$. By Cauchy-Schwarz inequality, we have $D(P||R)$=$E_{R}((\overline{x}/\overline{X})\log(\overline{x}/$ $\overline{X}))\leq K_1 E_{R}|\overline{x}/\overline{X}-1|\leq K_1 E_{R}(\overline{x}/\overline{X}-1)^2$ for some $K_1>0$ since C$2$ holds, and $\log(x)\leq|x-1|$ for $x>0$. Here $E_{R}$ denotes the expectation with respect to $R(s)$. Therefore, $nD(P||R)\leq K_1 (1-f)(N/(N-1)) (S^2_x/\overline{X}^2)\leq 2 K_1(\sum_{i=1}^N X_i^2/N\overline{X}^2)\leq 2 K_1(\max_{1\leq i \leq N} X_{i}/\min_{1\leq i \leq N} X_{i})^2$=$O(1)$ as $\nu\rightarrow\infty$, where $f$=$n/N$. Hence, $D(P||R)\rightarrow 0$ as $\nu\rightarrow\infty$. Thus LMS sampling design is a high entropy sampling design. 
\par

Next, suppose that $\{\pi_{i}\}_{i=1}^N$ denote inclusion probabilities of $P(s)$. Then, we have $\pi_{i}$=$(n-1)/(N-1)+(X_i/\sum_{i=1}^N X_i)((N-n)/(N-1))$ and $\pi_{i}-n/N$=$-(N-n)(N(N-1))^{-1}(X_i/\overline{X}-1)$. Further,
$$
\frac{|\pi_{i}-n/N|}{n/N}=\frac{N-n}{n(N-1)}\left|\frac{X_i}{\overline{X}}-1\right|\leq\frac{N-n}{n(N-1)}\left(\frac{\max_{1\leq i \leq N} X_{i}}{\min_{1\leq i \leq N} X_{ i}}+1\right).
$$
Therefore, $\max_{1\leq i \leq N}|N\pi_{i}/n-1|\rightarrow0$ as $\nu\rightarrow\infty$ by C$2$. Hence, $K_2\leq \min_{1\leq i\leq N}(N\pi_{i}/n)$ $\leq \max_{1\leq i\leq N} (N\pi_{i}/n)\leq K_3$ for all sufficiently large $\nu$ and some constants $K_2>0$ and $K_3>0$. Thus \eqref{eq 1} holds under LMS sampling design.
\end{proof}
Next, suppose that $\{\textbf{V}_i\}_{i=1}^N$, $\overline{\bf{V}}$, $\Sigma_1$ and $\Sigma_2$ are as in the previous Section \ref{sec s1}. Let us define $\hat{\overline{V}}_1$= $\sum_{i\in s}(N\pi_i)^{-1}V_i$ and $\hat{\overline{V}}_2$=$\sum_{i\in s} G_iV_i/NX_i$, where $G_i$'s are as in the paragraph containing Table $8$ in the main article. Now, we state the following lemma.
\begin{lemma}\label{lem 4} 
Suppose that $C0$ through C$3$ hold. Then, under SRSWOR, LMS sampling design and any HE$\pi$PS sampling design, we have $\sqrt{n}(\hat{\overline{\bf{V}}}_1-\overline{\bf{V}})\xrightarrow{\mathcal{L}}N(0,\Gamma_1)$ as $\nu\rightarrow\infty$, where $\Gamma_1$=$\lim_{\nu\rightarrow\infty}\Sigma_1$. Further, suppose that C$0$ through C$2$ and C$4$ hold. Then, we have $\sqrt{n}(\hat{\overline{\bf{V}}}_2-\overline{\bf{V}})\xrightarrow{\mathcal{L}}N(0,\Gamma_2)$ as $\nu\rightarrow\infty$ under RHC sampling, where $\Gamma_2$=$\lim_{\nu\rightarrow\infty}\Sigma_2$.
\end{lemma}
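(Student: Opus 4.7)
The plan is to reduce everything to one-dimensional central limit theorems by the Cramér--Wold device, and then invoke a central limit theorem tailored to each sampling design, with the moment/variance hypotheses verified from C$0$ through C$4$ and from Lemma S\ref{lem 1} and Lemma S\ref{lem 6}. Concretely, fix any unit vector $\mathbf{u}\in\mathbb{R}^p$, set $W_i=\mathbf{u}^T\mathbf{V}_i$, and observe that $\mathbf{u}^T\hat{\overline{\bf{V}}}_1=\sum_{i\in s}(N\pi_i)^{-1}W_i$ and $\mathbf{u}^T\hat{\overline{\bf{V}}}_2=\sum_{i\in s}G_iW_i/(NX_i)$, while $\mathbf{u}^T\Sigma_1\mathbf{u}$ and $\mathbf{u}^T\Sigma_2\mathbf{u}$ are the corresponding scalar normalizing variances. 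It suffices to prove that $\sqrt{n}(\mathbf{u}^T\hat{\overline{\bf{V}}}_1-\mathbf{u}^T\overline{\bf{V}})\xrightarrow{\mathcal{L}}N(0,\mathbf{u}^T\Gamma_1\mathbf{u})$ and $\sqrt{n}(\mathbf{u}^T\hat{\overline{\bf{V}}}_2-\mathbf{u}^T\overline{\bf{V}})\xrightarrow{\mathcal{L}}N(0,\mathbf{u}^T\Gamma_2\mathbf{u})$, since C$3$ and C$4$ guarantee that the limits $\Gamma_1$ and $\Gamma_2$ are positive definite.

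For the Horvitz--Thompson estimator $\hat{\overline{\bf{V}}}_1$ I would treat the three designs separately. Under SRSWOR the inclusion probabilities equal $n/N$ and the classical Hájek CLT for sampling without replacement applies directly; the fourth-moment condition C$1$ supplies the Lindeberg-type hypothesis and C$3$ gives the asymptotic variance. Under the LMS sampling design, Lemma S\ref{lem 1} has already established that the design is high entropy (its Kullback--Leibler divergence from SRSWOR vanishes) and that $N\pi_i/n$ is bounded between two positive constants, so the standard CLT for Horvitz--Thompson estimators under high-entropy designs applies: the Lindeberg condition is verified from C$1$ together with the uniform bounds on $\pi_i$, and the limiting variance is $\Gamma_1$ by C$3$. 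Under an HE$\pi$PS sampling design the same transfer argument works verbatim, because $\pi_i=nX_i/\sum_j X_j$ is bounded between positive constants by C$2$ and the design is high entropy by definition.

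For the RHC estimator $\hat{\overline{\bf{V}}}_2$ I would exploit the two-stage conditional structure of RHC sampling: given the random partition of $\{1,\ldots,N\}$ into $n$ groups with sizes $\{N_i\}$ and $X$-totals $\{G_i\}$, the sample is obtained by drawing one unit independently from each group with probability $X_j/G_i$ for unit $j$ in group $i$. Conditional on the partition the estimator is therefore a sum of $n$ independent summands, so a conditional Lindeberg CLT yields conditional asymptotic normality with conditional variance of order $\Sigma_2$; the moment control to verify Lindeberg comes from C$1$. Combining this with the deterministic convergence $n\gamma\to c$ from Lemma S\ref{lem 6} and the convergence of the conditional variance to $\Gamma_2$ (which is positive definite by C$4$), one obtains the unconditional CLT by a standard mixing argument (e.g.\ Slutsky plus the fact that the partition-level randomness is asymptotically negligible after $\sqrt{n}$ normalization).

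The main obstacle I expect is twofold. First, for the LMS and HE$\pi$PS cases one must check that the high-entropy transfer of the CLT from rejective sampling preserves the asymptotic variance $\Gamma_1$; this is where the uniform inclusion-probability bounds of Lemma S\ref{lem 1} and the fourth-moment bound in C$1$ are jointly essential. Second, for the RHC case one must show that the factor $n\gamma$ in the definition of $\Sigma_2$ correctly captures the design effect in the limit, which requires Lemma S\ref{lem 6} to pin down the constant $c$ and a careful handling of the dependence induced by the random partition before invoking the conditional CLT.
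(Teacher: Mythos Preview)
Your treatment of $\hat{\overline{\bf{V}}}_1$ is essentially the paper's proof: Cram\'er--Wold, verification of a H\'ajek--Lindeberg condition using C$1$ and the uniform bounds on $N\pi_i/n$ (from Lemma S\ref{lem 1} for LMS, from C$2$ for HE$\pi$PS, trivially for SRSWOR), and then an appeal to a CLT for Horvitz--Thompson estimators under high-entropy designs. The paper does not split SRSWOR off as a separate case; it treats all three designs uniformly and invokes Theorem~5 of Berger (1998), but the substance is the same as what you wrote.

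For the RHC part your route genuinely diverges from the paper's. The paper does not condition on the random partition at all: it verifies a single Lyapunov-type quantity (condition~C$1$ of Ohlsson, 1986), namely a bound on $n\gamma(\max_i X_i)\big(N^{-1}\sum_i N_i^3(N_i-1)\sum_i(\mathbf m(\mathbf V_i\overline X/X_i-\overline{\mathbf V})^T)^4 X_i\big)^{1/2}\big/\big(\overline X^{3/2}\sum_i N_i(N_i-1)\,\mathbf m\Sigma_2\mathbf m^T\big)$, shows it is $O(n^{-1/2})$ using C$0$--C$2$ and C$4$, and then quotes Ohlsson's martingale CLT (Theorem~2.1) directly. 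Your conditional-CLT-plus-mixing plan is a legitimate alternative, and when it works it is arguably more transparent about where the randomness sits. The cost is that the step you pass over---``convergence of the conditional variance to $\Gamma_2$'' and ``partition-level randomness is asymptotically negligible''---is not free: the conditional variance given the partition is a genuine random variable in the partition, and you need a law of large numbers over random partitions to show it converges in probability to the deterministic limit $\Gamma_2$. That argument is doable (it is essentially what underlies Ohlsson's martingale structure), but it is more than a one-line Slutsky remark; if you pursue this route you should state and prove that convergence explicitly. The paper's shortcut of citing Ohlsson buys exactly this: the two layers of randomness are absorbed into a single verified condition.
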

\begin{proof}
Note that SRSWOR is a high entropy sampling design since it is a rejective sampling design. Also, \eqref{eq 1} in Lemma S\ref{lem 1} holds trivially under SRSWOR. It follows from Lemma S\ref{lem 1} that LMS sampling design is a high entropy sampling design, and \eqref{eq 1} holds under this sampling design. Further, any HE$\pi$PS sampling design satisfies \eqref{eq 1} since C$2$ holds. Now, fix $\epsilon>0$ and $\textbf{m}\in \mathbb{R}^p$. Suppose that $L(\epsilon,\textbf{m})$=$(n^{-1}N^2 \textbf{m}\Sigma_1 \textbf{m}^T)^{-1} \sum_{i \in G(\epsilon,\textbf{m})}(\textbf{m}$ $(\textbf{V}_i-\textbf{T} \pi_i)^T)^2 (\pi^{-1}_i-1)$ for $G(\epsilon,\textbf{m})$=$\{1\leq i \leq N: |\textbf{m}(\textbf{V}_i-\textbf{T} \pi_i)^T|>\epsilon \pi_i N( n^{-1}$ $\textbf{m}\Sigma_1 \textbf{m}^T)^{1/2}\}$, $\textbf{T}$=$\sum_{i=1}^N \textbf{V}_i(1-\pi_i)/\sum_{i=1}^N \pi_i(1-\pi_i)$ and  $\textbf{Z}_i$=$(n/N\pi_i)\textbf{V}_i$ $-(n/N)\textbf{T}$, $i$=$1,\ldots,N$. Then, given any $\eta>0$, $L(\epsilon,\textbf{m})\leq (\textbf{m}\Sigma_1 \textbf{m}^T)^{-(1+\eta/2)}$ $ n^{-\eta/2}\epsilon^{-\eta}N^{-1}$ $\sum_{i=1}^N (||\textbf{m}||||\textbf{Z}_i||)^{2+\eta}(N\pi_i/n)$ since $|\textbf{m}\textbf{Z}_i^T|/(\sqrt{n} \epsilon (\textbf{m}\Sigma_1 \textbf{m}^T)^{1/2})>1$ for any $i\in G(\epsilon,\textbf{m})$. It follows from Jensen's inequality that $N^{-1}\sum_{i=1}^N ||\textbf{Z}_i||^{2+\eta}$ $(N\pi_i/n)\leq 2^{1+\eta}(N^{-1} \sum_{i=1}^N ||\textbf{V}_i (n/N\pi_i)||^{2+\eta}$ $(N\pi_i/n)+||(n/N)\textbf{T}||^{2+\eta})$ since $\sum_{i=1}^N \pi_i$=$n$. It also follows from C$1$, C$2$ and Jensen's inequality that $\sum_{i=1}^N ||\textbf{V}_i||^{2+\eta}$ $/N$=$O(1)$ as $\nu\rightarrow\infty$ for any $0<\eta\leq 2$. Further, $\sum_{i=1}^N\pi_i(1-\pi_i)/n$ is bounded away from $0$ as $\nu\rightarrow\infty$ under SRSWOR, LMS sampling design and any HE$\pi$PS sampling design because \eqref{eq 1} holds under these sampling designs, and C$0$ holds. Therefore, $N^{-1} \sum_{i=1}^N ||\textbf{V}_i (n/N\pi_i)||^{2+\eta}(N\pi_i/n)$=$O(1)$ and $||(n/N)\textbf{T}||^{2+\eta}$=$O(1)$, and hence $N^{-1}\sum_{i=1}^N ||\textbf{Z}_i||^{2+\eta}(N\pi_i/n)$=$O(1)$ as $\nu\rightarrow\infty$ under the above sampling designs. Then, $L(\epsilon,\textbf{m})\rightarrow 0$ as $\nu\rightarrow\infty$ for any $\epsilon>0$ under all of these sampling designs since C$3$ holds. Therefore, $\inf\{\epsilon>0:L(\epsilon,\textbf{m})\leq \epsilon\}\rightarrow 0$ as $\nu\rightarrow\infty$, and consequently the H\'ajek-Lindeberg condition holds for $\{\textbf{m}\textbf{V}_i^T\}_{i=1}^N$ under each of the above sampling designs. Also, $\sum_{i=1}^N \pi_i(1-\pi_i)\rightarrow \infty$ as $\nu\rightarrow\infty$ under these sampling designs. Then, from Theorem $5$ in \cite{MR1624693}, $\sqrt{n}\textbf{m}(\hat{\overline{\bf{V}}}_1-\overline{\bf{V}})^T\xrightarrow{\mathcal{L}}N(0,\textbf{m} \Gamma_1 \textbf{m}^T)$ as $\nu\rightarrow\infty$ under each of the above sampling designs for any $\textbf{m}\in\mathbb{R}^{p}$ and $\Gamma_1$=$\lim_{\nu\rightarrow\infty}\Sigma_1$. Hence, $\sqrt{n}(\hat{\overline{\bf{V}}}_1-\overline{\bf{V}})\xrightarrow{\mathcal{L}}N(0,\Gamma_1)$ as $\nu\rightarrow\infty$ under the above-mentioned sampling designs.
\vspace*{.2cm} 

Next, define $L(\textbf{m})$=$n\gamma(\max_{1\leq i\leq N} X_i)(N^{-1}\sum_{i=1}^n N_i^3(N_i-1)\sum_{i=1}^N(\textbf{m} (\textbf{V}_i \overline{X}/$ $X_i-\overline{\textbf{V}})^T)^4 X_i)^{1/2}$ $\times(\overline{X}^{3/2}\sum_{i=1}^n N_i(N_i-1) \textbf{m}\Sigma_2 \textbf{m}^T)^{-1}$, where $\gamma$=$\sum_{i=1}^n N_i (N_i-1)/N(N-1)$ as before. Note that as $\nu\rightarrow\infty$, $(N^{-1}\sum_{i=1}^N(\textbf{m}(\textbf{V}_i \overline{X}/ X_i-\overline{\textbf{V}})^T)^4 (X_i/\overline{X}))^{1/2}$=$O(1)$ and $(\max_{1\leq i\leq N} X_i)/\overline{X}$ =$O(1)$ since C$1$ and C$2$ hold.  Now, recall from Section $2$ in the main text that the $N_i$'s are considered as in $(1)$.  Then, under C$0$, we have $(\sum_{i=1}^n N_i^3(N_i-1))^{1/2}(\sum_{i=1}^n N_i(N_i-1))^{-1}$=$O(1/\sqrt{n})$ and $n\gamma$=$O(1)$ as $\nu\rightarrow\infty$. Therefore, $L(\textbf{m})\rightarrow 0$ as $\nu\rightarrow\infty$ since C$4$ holds. This implies that condition C$1$ in \cite{MR844032} holds for $\{\textbf{m}\textbf{V}_i^T\}_{i=1}^N$. Therefore, by Theorem $2.1$ in \cite{MR844032}, $\sqrt{n}\textbf{m}(\hat{\overline{\bf{V}}}_2-\overline{\bf{V}})^T \xrightarrow{\mathcal{L}}N(0,\textbf{m} \Gamma_2 \textbf{m}^T)$ as $\nu\rightarrow\infty$ under RHC sampling design for any $\textbf{m}\in\mathbb{R}^{p}$ and $\Gamma_2$=$\lim_{\nu\rightarrow\infty}\Sigma_2$. Hence, $\sqrt{n}(\hat{\overline{\bf{V}}}_2-\overline{\bf{V}})\xrightarrow{\mathcal{L}}N(0,\Gamma_2)$ as $\nu\rightarrow\infty$ under RHC sampling design.
\end{proof} 
Next, suppose that $\overline{\bf{W}}$=$\sum_{i=1}^N \textbf{W}_i/N$, $\hat{\overline{\bf{W}}}_1$=$\sum_{i\in s}(N\pi_i)^{-1}\textbf{W}_i$ and $\hat{\overline{\bf{W}}}_2$= $\sum_{i\in s}G_i \textbf{W}_i/NX_i$ for $\textbf{W}_i$=$(h_i, X_ih_i,$ $X_i^2)$, $i$=$1,\ldots,N$. Let us also define $\hat{\overline{X}}_1$=$\sum_{i\in s}$ $(N\pi_i)^{-1}X_i$. Now, we state the following lemma.
\begin{lemma}\label{lem 2}
Suppose that $C0$ through C$2$ hold. Then, under SRSWOR, LMS sampling design and any HE$\pi$PS sampling design, we have $\hat{\overline{\bf{W}}}_1-\overline{\bf{W}}$=$o_p(1)$, $\sqrt{n}(\hat{\overline{X}}_1-\overline{X})$=$O_p(1)$ and $\sqrt{n}(\sum_{i\in s}(N\pi_i)^{-1}$ $-1)$=$O_p(1)$ as $\nu\rightarrow\infty$. Moreover, under RHC sampling design, we have $\hat{\overline{\bf{W}}}_2-\overline{\bf{W}}$=$o_p(1)$ and $\sqrt{n}(\sum_{i\in s}G_i/NX_i-1)$=$O_p(1)$ as $\nu\rightarrow\infty$.
\end{lemma}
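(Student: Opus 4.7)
The plan is to handle all five claims by the same recipe: each estimator is (exactly or approximately) design unbiased for its stated target, and a variance-plus-Chebyshev argument gives the advertised $o_p(1)$ or $O_p(n^{-1/2})$ rate once the design variance is bounded by $O(1/n)$.

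For the three claims under SRSWOR, LMS, and any HE$\pi$PS design, each quantity is a Horvitz--Thompson mean-of-$\textbf{V}$ estimator corresponding to $\textbf{V}_i = \textbf{W}_i$, $\textbf{V}_i = X_i$, and $\textbf{V}_i \equiv 1$ respectively (the last having $\overline{\textbf{V}} = 1$), and all three are design unbiased. Writing the HT variance in the standard quadratic form in $(\pi_{ij} - \pi_i \pi_j)$, using the uniform upper and lower bounds on $N\pi_i/n$ furnished by Lemma S\ref{lem 1} (holding trivially for SRSWOR, and under HE$\pi$PS by C2), and invoking the standard comparison of the second-order inclusion probabilities of a high-entropy fixed-size design with those of the associated rejective design, I obtain a bound of the form
\[
\mathrm{Var}(\hat{\overline{\textbf{V}}}_1) \;\leq\; C\, n^{-1}\, N^{-1}\sum_{i=1}^N \|\textbf{V}_i\|^2.
\]
The moment bound $N^{-1}\sum_i \|\textbf{W}_i\|^2 = O(1)$ then follows by combining the $L^2$ control of $\{h_i\}$ supplied by C1 with the $X_i \asymp \overline{X}$ control supplied by C2, applied to the three components $h_i$, $X_i h_i$, $X_i^2$ of $\textbf{W}_i$. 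Chebyshev's inequality finally gives $\hat{\overline{\textbf{W}}}_1 - \overline{\textbf{W}} = O_p(n^{-1/2}) = o_p(1)$, together with $\sqrt{n}(\hat{\overline{X}}_1 - \overline{X}) = O_p(1)$ and $\sqrt{n}\bigl(\sum_{i\in s}(N\pi_i)^{-1} - 1\bigr) = O_p(1)$.

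For the RHC statements I use the closed-form RHC variance, which for the estimator $\sum_{i\in s} G_i \textbf{V}_i/(NX_i)$ is exactly $n^{-1}\Sigma_2$ in the notation of the excerpt. By Lemma S\ref{lem 6}, $n\gamma \to c < \infty$ under C0, and the remaining factor $\overline{X}\, N^{-1}\sum_i (\textbf{V}_i - X_i\overline{\textbf{V}}/\overline{X})^T(\textbf{V}_i - X_i\overline{\textbf{V}}/\overline{X})/X_i$ is $O(1)$ by the same C1-plus-C2 moment argument (using that $1/X_i$ is of the order of $1/\overline{X}$ by C2). Hence the RHC variance is $O(1/n)$, and Chebyshev yields $\hat{\overline{\textbf{W}}}_2 - \overline{\textbf{W}} = o_p(1)$ and, taking $\textbf{V}_i \equiv 1$ so that $\overline{\textbf{V}} = 1$, the rate $\sqrt{n}\bigl(\sum_{i\in s} G_i/(NX_i) - 1\bigr) = O_p(1)$.

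The only step with any technical content is the second-moment bound $N^{-1}\sum_i \|\textbf{W}_i\|^2 = O(1)$, which unpacks directly into the $L^2$ control on $\{h_i\}$ from C1 and the $X_i \asymp \overline{X}$ bound from C2. A minor care point is the high-entropy HT variance inequality used in the second paragraph, but this is a well-known consequence of the divergence bound already established in the proof of Lemma S\ref{lem 1} together with the explicit rejective variance formula; no new ingredients are required.
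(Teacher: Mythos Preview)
Your RHC argument is essentially the paper's: the closed-form RHC variance, bounded via C$1$--C$2$ and the $n\gamma=O(1)$ from Lemma~S\ref{lem 6}, plus Chebyshev. Fine.

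The high-entropy part, however, has a real gap. You claim a direct bound on the Horvitz--Thompson variance under $P$,
\[
\mathrm{Var}_P(\hat{\overline{\mathbf V}}_1)\;\le\; C\,n^{-1}N^{-1}\sum_{i=1}^N\|\mathbf V_i\|^2,
\]
justified by a ``standard comparison of the second-order inclusion probabilities of a high-entropy fixed-size design with those of the associated rejective design.'' The high-entropy hypothesis is only $D(P\|R)\to 0$ for \emph{some} rejective $R$; this gives a total-variation bound on events, but it does \emph{not} deliver the pairwise control on $\pi_{ij}^P-\pi_i\pi_j$ needed to push the Sen--Yates--Grundy/H\'ajek variance inequality from the rejective design through to $P$. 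Trying to transfer the second moment itself via total variation fails too: $|E_P f-E_{\tilde R}f|\le \|f\|_\infty\, d_{TV}(P,\tilde R)$ requires a uniform bound on $f=(\hat{\overline{\mathbf V}}_1-\overline{\mathbf V})^2$, which you do not have for the $h_i$-components under C$1$ alone, and even where $f$ is bounded the resulting rate need not reach $O(1/n)$ when only $D(P\|R)\to 0$ is assumed.

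The paper avoids this entirely. It never bounds $\mathrm{Var}_P$. Instead it (i) passes to the rejective design $\tilde R$ with the \emph{same} first-order inclusion probabilities as $P$, (ii) uses H\'ajek's explicit rejective variance formula there to get $\sqrt n(\hat{\overline{\mathbf W}}_1-\overline{\mathbf W})=O_p(1)$ under $\tilde R$ by Chebyshev, and then (iii) transfers the \emph{probability} statement to $P$ via
\[
P(E)\;\le\;\tilde R(E)+\sqrt{2D(P\|\tilde R)}\;\le\;\tilde R(E)+\sqrt{2D(P\| R)},
\]
using Lemmas~2 and~3 of Berger (1998). That is the missing idea: transfer tail probabilities, not variances or second-order inclusion probabilities. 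If you rewrite your high-entropy step this way, the rest of your outline (the moment bounds from C$1$--C$2$, the three choices $\mathbf V_i=\mathbf W_i,\,X_i,\,1$) goes through unchanged.
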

\begin{proof}
We first show that as $\nu\rightarrow\infty$, $\hat{\overline{\bf{W}}}_1-\overline{\bf{W}}$=$o_p(1)$, $\sqrt{n}(\hat{\overline{X}}_1-\overline{X})$=$O_p(1)$ and $\sqrt{n}(\sum_{i\in s}($ $N\pi_i)^{-1}-1)$=$O_p(1)$ under a high entropy sampling design $P(s)$ satisfying \eqref{eq 1} in Lemma S\ref{lem 1}. Fix $\textbf{m}\in\mathbb{R}^{2p+1}$. Suppose that $\tilde{R}(s)$ is a rejective sampling design with inclusion probabilities equal to those of $P(s)$ (cf. \cite{MR1624693}). Under $\tilde{R}(s)$, $var(\textbf{m}(\sqrt{n}(\hat{\overline{\textbf{W}}}_1-\overline{\textbf{W}})^T))$=$\textbf{m}(nN^{-2}$ $\sum_{i=1}^{N}(\textbf{W}_i-\textbf{T}\pi_i)^T(\textbf{W}_i-\textbf{T}\pi_i)(\pi_i^{-1}-1))\textbf{m}^T(1+e)$ (see Theorem $6.1$ in \cite{MR0178555}), where $\textbf{T}$=$\sum_{i=1}^N \textbf{W}_i(1-\pi_i)/\sum_{i=1}^N \pi_i(1-\pi_i)$, and $e\rightarrow 0$ as $\nu\rightarrow\infty$ whenever $\sum_{i=1}^N \pi_i(1-\pi_i)\rightarrow\infty$ as $\nu\rightarrow\infty$. Note that \eqref{eq 1} holds under $\tilde{R}(s)$, and hence $\sum_{i=1}^N \pi_i(1-\pi_i)\rightarrow\infty$ as $\nu\rightarrow\infty$ under $\tilde{R}(s)$ because \eqref{eq 1} holds under $P(s)$, and C$0$ holds. Then, $\textbf{m}(nN^{-2}\sum_{i=1}^{N}(\textbf{W}_i-\textbf{T}\pi_i)^T(\textbf{W}_i-\textbf{T}\pi_i)(\pi_i^{-1}-1))\textbf{m}^T\leq n N^{-2}\sum_{i=1}^N (\textbf{m}\textbf{W}_i^T)^2/\pi_i$=$O(1)$ under $\tilde{R}(s)$ since C$1$ holds. Therefore, $\sqrt{n}(\hat{\overline{\textbf{W}}}_1-\overline{\textbf{W}})$=$O_p(1)$ as $\nu\rightarrow\infty$ under $\tilde{R}(s)$ since $var(\textbf{m}(\sqrt{n}(\hat{\overline{\textbf{W}}}_1-\overline{\textbf{W}})^T))$=$O(1)$ as $\nu\rightarrow\infty$ for any $\textbf{m}\in\mathbb{R}^{2p+1}$ under $\tilde{R}(s)$. Now, $\sum_{s\in E}P(s)\leq\sum_{s\in E}\tilde{R}(s)+\sum_{s\in\mathcal{S}}|P(s)-\tilde{R}(s)|\leq \sum_{s\in E}\tilde{R}(s)+(2D(P||\tilde{R}))^{1/2}$ $\leq \sum_{s\in E}\tilde{R}(s)+(2D(P||R))^{1/2}$ (see Lemmas $2$ and $3$ in \cite{MR1624693}), where $E$=$\{s\in\mathcal{S}:||\sqrt{n}(\hat{\overline{\textbf{W}}}_1-\overline{\textbf{W}})||>\delta\}$ for $\delta>0$ and $R(s)$ is any other rejective sampling design. Let us consider a rejective sampling design $R(s)$ such that $D(P||R)\rightarrow 0$ as $\nu\rightarrow\infty$. Therefore, given any $\epsilon>0$, there exists a $\delta>0$ such that $\sum_{s\in E}P(s)\leq \epsilon$ for all sufficiently large $\nu$. Hence, as $\nu\rightarrow\infty$, $\sqrt{n}(\hat{\overline{\textbf{W}}}_1-\overline{\textbf{W}})$=$O_p(1)$ and $\hat{\overline{\textbf{W}}}_1-\overline{\textbf{W}}$=$o_p(1)$ under $P(s)$. Similarly, we can show that as $\nu\rightarrow\infty$, $\sqrt{n}(\hat{\overline{X}}_1-\overline{X})$=$O_p(1)$ and $\sqrt{n}(\sum_{i\in s}(N\pi_i)^{-1}-1)$=$O_p(1)$ under $P(s)$. Now, recall from the proof of Lemma S\ref{lem 4} that SRSWOR and LMS sampling design are high entropy sampling designs, and they satisfy \eqref{eq 1}. Also, any HE$\pi$PS sampling design satisfies \eqref{eq 1}. Therefore, as $\nu\rightarrow\infty$, $\hat{\overline{\bf{W}}}_1-\overline{\bf{W}}$=$o_p(1)$, $\sqrt{n}(\hat{\overline{X}}_1-\overline{X})$=$O_p(1)$ and $\sqrt{n}(\sum_{i\in s}($ $N\pi_i)^{-1}-1)$=$O_p(1)$ under the above-mentioned sampling designs. 
\par

Under RHC sampling design, $var(\textbf{m}(\sqrt{n}(\hat{\overline{\textbf{W}}}_2-\overline{\textbf{W}})^T))$=$\textbf{m}(n\gamma\overline{X}N^{-1}\sum_{i=1}^{N}$ $(\textbf{W}_i-X_i\overline{\textbf{W}}/\overline{X})^T(\textbf{W}_i-X_i\overline{\textbf{W}}/\overline{X})/ X_i)\textbf{m}^T$ (see \cite{MR844032}). Recall from the proof of Lemma S\ref{lem 4} that $n\gamma$=$O(1)$ as $\nu\rightarrow\infty$. Then, $var(\textbf{m}(\sqrt{n}(\hat{\overline{\textbf{W}}}_2-\overline{\textbf{W}})^T))\leq n\gamma(\overline{X}/N)\sum_{i=1}^N (\textbf{m}\textbf{W}_i^T)^2/X_i$ =$O(1)$ as $\nu\rightarrow\infty$ since C$1$ and C$2$ hold. Hence, as $\nu\rightarrow\infty$, $\sqrt{n}(\hat{\overline{\textbf{W}}}_2-\overline{\textbf{W}})$=$O_p(1)$ and $\hat{\overline{\textbf{W}}}_2-\overline{\textbf{W}}$=$o_p(1)$ under RHC sampling design. Similarly, we can show that as $\nu\rightarrow\infty$, $\sqrt{n}(\sum_{i\in s}G_i/NX_i-1)$=$O_p(1)$ under RHC sampling design.
\end{proof}

Recall from the $2^{nd}$ paragraph in the Appendix that we denote the HT, the RHC, the H\'ajek, the ratio, the product, the GREG and the PEML estimators of population means of $h(y)$ by $\hat{\overline{h}}_{HT}$, $\hat{\overline{h}}_{RHC}$, $\hat{\overline{h}}_{H}$,  $\hat{\overline{h}}_{RA}$, $\hat{\overline{h}}_{PR}$, $\hat{\overline{h}}_{GREG}$ and $\hat{\overline{h}}_{PEML}$, respectively. Suppose that $\hat{\overline{h}}$ denotes one of $\hat{\overline{h}}_{HT}$, $\hat{\overline{h}}_{H}$, $\hat{\overline{h}}_{RA}$, $\hat{\overline{h}}_{PR}$, and $\hat{\overline{h}}_{GREG}$ with $d(i,s)$=$(N\pi_i)^{-1}$. Then, a Taylor type expansion of $\hat{\overline{h}}-\overline{h}$ can be obtained as $\hat{\overline{h}}-\overline{h}$=$\Theta(\hat{\overline{\textbf{V}}}_1-\overline{\textbf{V}})+\textbf{Z}$, where $\hat{\overline{\textbf{V}}}_1$=$\sum_{i\in s}(N\pi_i)^{-1}\textbf{V}_i$, and the $\textbf{V}_i$'s, $\Theta$ and $\textbf{Z}$ are as described in Table \ref{table B} below. 
\begin{table}[h]
\caption{Expressions of $\textbf{V}_i$, $\Theta$ and $\textbf{Z}$ for different $\hat{\overline{h}}$'s}
\label{table B}
\begin{center}
\begin{tabular}{|c|c|c|c|} 
\hline
$\hat{\overline{h}}$ & $\textbf{V}_i$ & $\Theta$ & $\textbf{Z}$\\
\hline
$\hat{\overline{h}}_{HT}$& $h_i$ &$1$ & $0$\\
\hline
$\hat{\overline{h}}_{H}$ & $h_i-\overline{h}$ & $(\sum_{i\in s}(N\pi_i)^{-1})^{-1}$ & $0$ \\
\hline
$\hat{\overline{h}}_{RA}$ &$h_i-\overline{h}X_i/\overline{X}$ &$\overline{X}/\hat{\overline{X}}_1$ & $0$\\
\hline
$\hat{\overline{h}}_{PR}$& $h_i+\overline{h}X_i/\overline{X}$ &$\hat{\overline{X}}_1/\overline{X}$ & $-(1-\hat{\overline{X}}_1/\overline{X}))^2\overline{h}$\\
\hline
$\hat{\overline{h}}_{GREG}$  with& $h_i-\overline{h}-$ &\multirow{2}{*}{$(\sum_{i\in s}(N\pi_i)^{-1})^{-1}$}& $(\hat{\overline{X}}_2-\overline{X})\times$\\
$d(i,s)$=$(N\pi_i)^{-1}$ & $S_{xh}(X_i-\overline{X})/S_x^2$ &  & $(S_{xh}/S_x^2-\hat{\beta}_1)$\\
\hline
\hline
$\hat{\overline{h}}_{RHC}$& $h_i$ &$1$ & $0$\\
\hline
$\hat{\overline{h}}_{GREG}$ with & $h_i-\overline{h}-$ &\multirow{2}{*}{$(\sum_{i\in s}G_i/NX_i)^{-1}$}& $\overline{X}((\sum_{i\in s}G_i/NX_i)^{-1}$ \\
$d(i,s)$=$G_i/N X_i$ & $S_{xh}(X_i-\overline{X})/S_x^2$ &  &$-1)(S_{xh}/S_x^2-\hat{\beta}_2)$ \\
\hline
\end{tabular}
\end{center}
\end{table}
On the other hand, if $\hat{\overline{h}}$ is either $\hat{\overline{h}}_{RHC}$ or $\hat{\overline{h}}_{GREG}$ with $d(i,s)$=$G_i/NX_i$, a Taylor type expansion of $\hat{\overline{h}}-\overline{h}$ can be obtained as $\hat{\overline{h}}-\overline{h}$=$\Theta(\hat{\overline{\textbf{V}}}_2-\overline{\textbf{V}})+\textbf{Z}$. Here, $\hat{\overline{\textbf{V}}}_2$=$\sum_{i\in s} G_i\textbf{V}_i/NX_i$, the $G_i$'s are as in the paragraph containing Table $8$ in the main text, and the $\textbf{V}_i$'s, $\Theta$ and $\textbf{Z}$ are once again described in Table \ref{table B}. In Table \ref{table B}, $\hat{\overline{X}}_1$=$\sum_{i\in s}(N\pi_i)^{-1}X_i$, $\hat{\overline{X}}_2$=$\hat{\overline{X}}_1/\sum_{i\in s}(N\pi_i)^{-1}$, $\hat{\beta}_1$=$(\sum_{i\in s}(N\pi_i)^{-1}$ $\sum_{i\in s}(N\pi_i)^{-1} h_i X_i-\hat{\overline{h}}_{HT}\hat{\overline{X}}_1)/(\sum_{i\in s}(N\pi_i)^{-1}\sum_{i\in s}(N\pi_i)^{-1}X_i^2-(\hat{\overline{X}}_1)^2)$ and $\hat{\beta}_2$=$(\sum_{i\in s}(G_i/NX_i)\sum_{i\in s}(G_i h_i/N)-\hat{\overline{h}}_{RHC}\overline{X})/(\sum_{i\in s}(G_i/NX_i)$ $\sum_{i\in s}(G_i X_i/N)$ $-\overline{X}^2)$. Now, we state the following Lemma.

\begin{lemma}\label{lem 3}
 $(i)$ Suppose that $C0$ through C$3$ hold. Further, suppose that $\hat{\overline{h}}$ is one of $\hat{\overline{h}}_{HT}$, $\hat{\overline{h}}_{H}$, $\hat{\overline{h}}_{RA}$, $\hat{\overline{h}}_{PR}$, and $\hat{\overline{h}}_{GREG}$ with $d(i,s)$=$(N\pi_i)^{-1}$. Then, under SRSWOR, LMS sampling design and any HE$\pi$PS sampling design,
\begin{equation}\label{eq s2} 
\sqrt{n}(\hat{\overline{h}}-\overline{h})\xrightarrow{\mathcal{L}}N(0,\Gamma)\text{ as }\nu\rightarrow\infty
\end{equation} 
for some p.d. matrix $\Gamma$.\\ 
$(ii)$ Next, suppose that $C0$ through C$2$ and C$4$ hold, and $\hat{\overline{h}}$ is $\hat{\overline{h}}_{RHC}$ or $\hat{\overline{h}}_{GREG}$ with $d(i,s)$=$G_i/NX_i$. Then, \eqref{eq s2} holds under RHC sampling design. 
\end{lemma}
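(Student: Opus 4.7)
The plan is to use the Taylor-type expansions laid out in Table \ref{table B} together with Lemma S\ref{lem 4} and Lemma S\ref{lem 2} (and Slutsky's theorem). For part \textsl{(i)}, write $\sqrt{n}(\hat{\overline{h}}-\overline{h}) = \Theta \cdot \sqrt{n}(\hat{\overline{\bf V}}_1-\overline{\bf V}) + \sqrt{n}\,{\bf Z}$, read off the corresponding $({\bf V}_i,\Theta,{\bf Z})$ from the table, and treat the three factors separately. Lemma S\ref{lem 4} immediately yields $\sqrt{n}(\hat{\overline{\bf V}}_1 - \overline{\bf V}) \xrightarrow{\mathcal L} N(0,\Gamma_1)$ with $\Gamma_1 = \lim_{\nu\to\infty}\Sigma_1$ positive definite under C$3$. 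Thus the main work reduces to showing $\Theta \xrightarrow{p} 1$ and $\sqrt{n}\,{\bf Z} = o_p(1)$, after which Slutsky delivers \eqref{eq s2} with $\Gamma = \Gamma_1$.

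For the scalar factor $\Theta$, the HT estimator has $\Theta=1$ trivially. For the H\'ajek and GREG rows, $\Theta = (\sum_{i\in s}(N\pi_i)^{-1})^{-1}$, and Lemma S\ref{lem 2} gives $\sum_{i\in s}(N\pi_i)^{-1} - 1 = O_p(1/\sqrt{n}) = o_p(1)$, so the continuous mapping theorem yields $\Theta \to 1$ in probability. For the ratio and product rows, $\Theta$ equals $\overline{X}/\hat{\overline{X}}_1$ or $\hat{\overline{X}}_1/\overline{X}$, and Lemma S\ref{lem 2} together with C$2$ (which keeps $\overline{X}$ bounded away from $0$ and $\infty$) yields $\Theta \to 1$ in probability.

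The remainder term $\sqrt{n}\,{\bf Z}$ is zero for HT, H\'ajek, and ratio. For the product estimator, ${\bf Z} = -(1-\hat{\overline{X}}_1/\overline{X})^2\overline{h}$, and Lemma S\ref{lem 2} gives $(1-\hat{\overline{X}}_1/\overline{X}) = O_p(1/\sqrt{n})$; combined with $\overline{h}=O(1)$ by C$1$, this yields $\sqrt{n}\,{\bf Z} = O_p(1/\sqrt{n}) = o_p(1)$. For the GREG estimator with $d(i,s)=(N\pi_i)^{-1}$, ${\bf Z} = (\hat{\overline{X}}_2-\overline{X})(S_{xh}/S_x^2 - \hat{\beta}_1)$, so it suffices to show $\sqrt{n}(\hat{\overline{X}}_2-\overline{X})=O_p(1)$ and $\hat{\beta}_1 - S_{xh}/S_x^2 = o_p(1)$. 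The first follows from writing $\hat{\overline{X}}_2 - \overline{X} = (\hat{\overline{X}}_1 - \overline{X}\sum_{i\in s}(N\pi_i)^{-1})/\sum_{i\in s}(N\pi_i)^{-1}$ and using Lemma S\ref{lem 2}. The second follows from applying Lemma S\ref{lem 2} componentwise to $\hat{\overline{\bf W}}_1$ (which estimates $\overline{h}$, $\overline{hX}$, $\overline{X^2}$), combined with C$1$, C$2$ and the continuous mapping theorem. The positive definiteness of $\Gamma_1$ is exactly C$3$, so the limiting Gaussian is non-degenerate.

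Part \textsl{(ii)} is handled in the same pattern but with the RHC expansion $\hat{\overline{h}}-\overline{h} = \Theta(\hat{\overline{\bf V}}_2-\overline{\bf V}) + {\bf Z}$: Lemma S\ref{lem 4} provides the CLT for $\sqrt{n}(\hat{\overline{\bf V}}_2-\overline{\bf V})$ with limiting covariance $\Gamma_2 = \lim_{\nu\to\infty}\Sigma_2$ p.d. under C$4$, Lemma S\ref{lem 2} gives $\sum_{i\in s} G_i/NX_i \to 1$ in probability for the $\Theta$ factor, and the GREG remainder is controlled using $\sqrt{n}(\sum_{i\in s}G_i/NX_i - 1) = O_p(1)$ together with the analogue for $\hat{\beta}_2$, which converges to $S_{xh}/S_x^2$ in probability by Lemma S\ref{lem 2} applied to $\hat{\overline{\bf W}}_2$. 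The main obstacle throughout is book-keeping for the GREG remainder, namely verifying that every plug-in ratio feeding into $\hat{\beta}_1$ or $\hat{\beta}_2$ converges in probability to its population counterpart with denominators bounded away from $0$; this is where C$2$ and C$5$ implicitly via C$1$--C$4$ do the work.
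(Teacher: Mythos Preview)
Your proposal is correct and follows essentially the same route as the paper: invoke Lemma~S\ref{lem 4} for the CLT on $\sqrt{n}(\hat{\overline{\bf V}}_j-\overline{\bf V})$, then use Lemma~S\ref{lem 2} to verify $\Theta\xrightarrow{p}1$ and $\sqrt{n}\,{\bf Z}=o_p(1)$ for each row of Table~\ref{table B}, concluding via Slutsky with $\Gamma=\Gamma_1$ (resp.\ $\Gamma_2$). The paper's proof is terser---it simply asserts these facts ``can be shown from Lemma~S\ref{lem 2}''---whereas you spell out each case; one minor slip is your closing remark about C$5$, which is not needed here (only C$0$--C$3$ for part~\textsl{(i)} and C$0$--C$2$, C$4$ for part~\textsl{(ii)}).
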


\begin{proof}
 It can be shown from Lemma S\ref{lem 4} that $\sqrt{n}(\hat{\overline{\textbf{V}}}_1-\overline{\textbf{V}})\xrightarrow{\mathcal{L}}N(0,\Gamma_1)$ as $\nu\rightarrow\infty$ under SRSWOR, LMS sampling design and any HE$\pi$PS sampling design, where $\Gamma_1$=$\lim_{\nu\rightarrow\infty}nN^{-2}\times$ $\sum_{i=1}^{N}(\textbf{V}_i-\textbf{T}\pi_i)^T(\textbf{V}_i-\textbf{T}\pi_i)(\pi_i^{-1}-1)$ with $\textbf{T}$=$\sum_{i=1}^N \textbf{V}_i(1-\pi_i)/\sum_{i=1}^N\pi_i(1-\pi_i)$. Note that $\Gamma_1$ is a p.d. matrix under each of the above sampling designs as C$3$ holds under these sampling designs. Let us now consider from  Table \ref{table B} various choices of $\Theta$ and $\textbf{Z}$ corresponding to $\hat{\overline{h}}_{HT}$, $\hat{\overline{h}}_{H}$, $\hat{\overline{h}}_{RA}$, $\hat{\overline{h}}_{PR}$, and $\hat{\overline{h}}_{GREG}$ with $d(i,s)$=$(N\pi_i)^{-1}$. Then, it can be shown from Lemma S\ref{lem 2} that for each of these choices, $\sqrt{n}\textbf{Z}$=$o_p(1)$ and $\Theta-1$=$o_p(1)$ as $\nu\rightarrow\infty$ under the above-mentioned sampling designs.  Therefore, \eqref{eq s2} holds under those sampling designs with $\Gamma$=$\Gamma_1$.  This completes the proof of $(i)$ in Lemma \ref{lem 3}
\par

 We can show from Lemma S\ref{lem 4} that $\sqrt{n}(\hat{\overline{\textbf{V}}}_2-\overline{\textbf{V}})\xrightarrow{\mathcal{L}}N(0,\Gamma_2)$ as $\nu\rightarrow\infty$ under RHC sampling design, where $\Gamma_2$=$\lim_{\nu\rightarrow\infty} n\gamma$ $\overline{X}N^{-1}$ $\sum_{i=1}^{N}(\textbf{V}_i-X_i\overline{\textbf{V}}/\overline{X})^T(\textbf{V}_i-X_i\overline{\textbf{V}}/\overline{X})/ X_i$ with $\gamma$=$\sum_{i=1}^n N_i(N_i-1)/N(N-1)$. Note that $\Gamma_2$ is a p.d. matrix since C$4$ holds. Let us now consider from Table \ref{table B} different choices of $\Theta$ and $\textbf{Z}$ corresponding to $\hat{\overline{h}}_{RHC}$, and $\hat{\overline{h}}_{GREG}$ with $d(i,s)$=$G_i/NX_i$. Then, it follows from Lemma S\ref{lem 2} that for each of these choices, $\sqrt{n}\textbf{Z}$=$o_p(1)$ and $\Theta-1$=$o_p(1)$ as $\nu\rightarrow\infty$ under RHC sampling design. Therefore, \eqref{eq s2} holds under RHC sampling design with $\Gamma$=$\Gamma_2$.  This completes the proof of $(ii)$ in Lemma \ref{lem 3}
\end{proof}
 Let $\{\textbf{V}_i\}_{i=1}^N$ be as described in Table \ref{table B}. Recall $\Sigma_1$  and $\Sigma_2$ from the paragraph preceding Lemma S\ref{lem s1} in this supplement. Note that the expression of $\Sigma_1$ remains the same for different HE$\pi$PS sampling designs. Also, recall from the paragraph preceding Theorem $3$ in the main text that  $\phi$=$\overline{X}-(n/N)\sum_{i=1}^N X_i^2/N\overline{X}$.  Now, we state the following lemma.

\begin{lemma}\label{thm 8}
$(i)$ Suppose that $C0$ through C$3$ hold. Further, suppose that $\sigma^2_1$ and $\sigma^2_2$ denote $\lim_{\nu\rightarrow\infty}\nabla g(\mu_0)\Sigma_1\nabla g(\mu_0)^T$ under SRSWOR and LMS sampling design, respectively, where $\mu_0$=$\lim_{\nu\rightarrow\infty}\overline{h}$. Then, we have $\sigma^2_{1}$=$\sigma^2_{2}$=$(1-\lambda)\lim_{\nu\rightarrow\infty}\sum_{i=1}^N (A_i-\bar{A})^2/N$ for $A_i$=$\nabla g(\mu_0)\textbf{V}_i^T$, $i$=$1,\ldots,N$. \\
$(ii)$ Next, suppose that C$4$ holds, and $\sigma^2_3$=$\lim_{\nu\rightarrow\infty}$ $\nabla g(\mu_0) \Sigma_2 \nabla g(\mu_0)^T$ in the case of RHC sampling design. Then, we have $\sigma^2_{3}$=$\lim_{\nu\rightarrow\infty} n\gamma ((\overline{X}/N)$ $\sum_{i=1}^N A_i^2/X_i-\bar{A}^2)$. On the other hand, if $C0$ through C$3$ hold, and $\sigma^2_4$=$\lim_{\nu\rightarrow\infty}$ $\nabla g(\mu_0)\Sigma_1\nabla g(\mu_0)^T$ under any HE$\pi$PS sampling design, then we have $\sigma^2_4$= $\lim_{\nu\rightarrow\infty}\big\{(1/N)\sum_{i=1}^N A_i^2\big((\overline{X}/X_i)-(n/N)\big) -\phi^{-1} \overline{X}^{-1}\times$ $\big((n/N)\sum_{i=1}^N A_i X_i/N-\overline{A} \overline{X}\big)^2 \big\}$. Further, if $C0$ holds with $\lambda$=$0$ and C$1$ through C$3$ hold, then we have $\sigma^2_{4}$=$\sigma^2_{3}$=$\lim_{\nu\rightarrow\infty} ((\overline{X}/N)$ $\sum_{i=1}^N A_i^2/X_i-\bar{A}^2)$.
\end{lemma}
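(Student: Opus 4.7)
The plan is to compute each $\sigma_j^2$ by substituting the appropriate inclusion probabilities into the definition of $\Sigma_1$ (or using the explicit formula for $\Sigma_2$ in the RHC case), sandwiching with $\nabla g(\mu_0)$, and passing to the limit via C$0$ and Lemma S\ref{lem 6}. Writing $A_i = \nabla g(\mu_0) \mathbf{V}_i^T$ and $t = \nabla g(\mu_0) \mathbf{T}^T$, the key identity
\begin{equation*}
\sum_{i=1}^N (A_i - t\pi_i)^2 (\pi_i^{-1}-1) = \sum_{i=1}^N A_i^2(\pi_i^{-1}-1) - \frac{\bigl(\sum_{i=1}^N A_i(1-\pi_i)\bigr)^2}{\sum_{i=1}^N \pi_i(1-\pi_i)},
\end{equation*}
obtained by expanding the square and using the definition of $t$, will serve as the workhorse for every $\Sigma_1$ computation.

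For part $(i)$, under SRSWOR I substitute $\pi_i \equiv n/N$, which makes $\mathbf{T} = \overline{\mathbf{V}}$ and reduces $\Sigma_1$ to $(1-n/N) N^{-1}\sum (\mathbf{V}_i - \overline{\mathbf{V}})^T(\mathbf{V}_i - \overline{\mathbf{V}})$; sandwiching with $\nabla g(\mu_0)$ and using $n/N \to \lambda$ gives $\sigma_1^2 = (1-\lambda)\lim N^{-1}\sum(A_i - \bar A)^2$. For LMS, Lemma S\ref{lem 1} supplies the uniform bound $\max_i|N\pi_i/n - 1| \to 0$, so in each of the two pieces of the identity $\pi_i^{-1}$ may be replaced by $N/n$ up to a uniform $o(1)$; combined with C$1$ to control $N^{-1}\sum A_i^2$ and $\bar A$, this produces exactly the same limit for $\sigma_2^2$, establishing $\sigma_1^2 = \sigma_2^2$.

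For part $(ii)$, under RHC I expand $\nabla g(\mu_0) \Sigma_2 \nabla g(\mu_0)^T = n\gamma \overline X N^{-1}\sum_i (A_i - X_i\bar A/\overline X)^2/X_i$; since $\sum X_i = N\overline X$ and $\sum A_i = N\bar A$ the cross term telescopes, yielding $n\gamma(\overline X N^{-1}\sum A_i^2/X_i - \bar A^2)$ and hence the stated $\sigma_3^2$. For HE$\pi$PS with $\pi_i = nX_i/(N\overline X)$, direct calculation gives $\sum \pi_i(1-\pi_i) = (n/\overline X)\phi$ and $\sum A_i(1-\pi_i) = N\bar A - (n/(N\overline X))\sum A_i X_i$; inserting these into the identity and scaling by $n/N^2$ produces the displayed formula for $\sigma_4^2$ after a routine rearrangement. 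The $\lambda = 0$ equality then follows from Lemma S\ref{lem 6} ($n\gamma \to 1$) together with $\phi \to \lim \overline X$: in $\sigma_4^2$ the correction term simplifies to $\bar A^2$, so both $\sigma_3^2$ and $\sigma_4^2$ collapse to $\lim(\overline X N^{-1}\sum A_i^2/X_i - \bar A^2)$.

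The argument is almost entirely algebraic, so the only real difficulty is bookkeeping: one must verify that the limits of $n\gamma$, $\phi$, $N^{-1}\sum A_i^2$, $\bar A$, $N^{-1}\sum A_i X_i$ and $\max X_i/\min X_i$ all exist and behave well enough for the manipulations to pass through, which is ensured by C$1$--C$4$ and Lemma S\ref{lem 6}. The HE$\pi$PS calculation demands the most care, since both its leading and correction terms depend nontrivially on $\lambda$ and must reduce to precisely the RHC limit at $\lambda = 0$.
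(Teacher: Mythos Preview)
Your proposal is correct and follows essentially the same route as the paper: you use the same expansion of $\nabla g(\mu_0)\Sigma_1\nabla g(\mu_0)^T$ into the two-term identity, substitute $\pi_i=n/N$ for SRSWOR, invoke $\max_i|N\pi_i/n-1|\to 0$ from Lemma~S\ref{lem 1} for LMS, expand the RHC quadratic directly, and plug $\pi_i=nX_i/N\overline X$ into the identity for HE$\pi$PS, finishing the $\lambda=0$ case with $n\gamma\to 1$ from Lemma~S\ref{lem 6}. The only cosmetic difference is that the paper shows $\sigma_2^2=\sigma_1^2$ by writing $\sigma_2^2-\sigma_1^2$ and arguing each piece vanishes, whereas you argue the LMS limit directly; also, be slightly careful with the phrase ``$\phi\to\lim\overline X$'' since $\overline X$ need not itself converge---what is actually used (and what the paper tacitly does) is that the \emph{ratios} $\phi/\overline X\to 1$ and $(n/N)N^{-1}\sum A_iX_i/(\bar A\,\overline X)\to 0$ when $\lambda=0$, so the whole correction term tends to $\bar A^2$.
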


\begin{proof}
 Let us first note that the limits in the expressions of $\sigma^2_1$ and $\sigma^2_2$ exist in view of C$3$. Also, note that $\nabla g(\mu_0)\Sigma_1\nabla g(\mu_0)^T$=$nN^{-2}\sum_{i=1}^{N}(A_i-T_A \pi_ i)^2(\pi^{-1}_i-1)$=$nN^{-2}$ $[\sum_{i=1}^{N} A_i^2(\pi^{-1}_i-1)-(\sum_{i=1}^N A_i(1-\pi_i))^2/\sum_{i=1}^N \pi_i(1-\pi_i)]$, where $T_A$=$\sum_{i=1}^N A_i(1-\pi_i)/\sum_{i=1}^N \pi_i(1-\pi_i)$ and $A_i$=$\nabla g(\mu_0)\textbf{V}_i^T$. Now, substituting $\pi_i$=$n/N$ in the above expression for SRSWOR, we get $\sigma^2_1$= $\lim_{\nu\rightarrow \infty} nN^{-2} $ $[\sum_{i=1}^{N} A_i^2(N/n-1)-(\sum_{i=1}^N A_i(1-n/N))^2/n(1-n/N)]$=$\lim_{\nu\rightarrow \infty}$ $(1-n/N)$ $\sum_{i=1}^N(A_i-\bar{A})^2/N$. Since C$0$ holds, we have $\sigma^2_1$=$(1-\lambda)\lim_{\nu\rightarrow \infty}$ $\sum_{i=1}^N(A_i-\bar{A})^2/N$. Let $\{\pi_{i}\}_{i=1}^N$ be the inclusion probabilities of LMS sampling design. Then, $\sigma_2^2-\sigma_1^2$=$\lim_{\nu\rightarrow \infty} nN^{-2}[\sum_{i=1}^{N}A_i^2(\pi_{i}^{-1} -N/n)-((\sum_{i=1}^{N}A_i(1-\pi_{i}))^2/\sum_{i=1}^{N}\pi_{i}(1-\pi_{i})-(\sum_{i=1}^{N}A_i(1-n/N))^2/n(1-n/N))]$. Now, it can be shown from the proof of Lemma S\ref{lem 1} that $\max_{1\leq i \leq N}|N\pi_{i}/n-1|\rightarrow0$ as $\nu\rightarrow\infty$. Therefore, using C$1$, we can show that $\lim_{\nu\rightarrow \infty}nN^{-2} \sum_{i=1}^{N}$ $A_i^2(\pi_{i}^{-1}-N/n)$=$0$ and $\lim_{\nu\rightarrow \infty}nN^{-2}[(\sum_{i=1}^{N}A_i(1-\pi_{i}))^2/\sum_{i=1}^{N}\pi_{i}(1-\pi_{i})-(\sum_{i=1}^{N}A_i(1-n/N))^2/n(1-n/N)]$=$0$, and consequently $\sigma^2_1$=$\sigma^2_2$.   This completes the proof of $(i)$ in Lemma \ref{thm 8}.
\par

 Next, consider the case of RHC sampling design and note that the limit in the expression of $\sigma^2_3$ exists in view of C$4$. Also, note that $\nabla g(\mu_0)\Sigma_2\nabla g(\mu_0)^T$ =$n\gamma (\overline{X}/N)\sum_{i=1}^{N}($ $A_i-\bar{A}X_i/\overline{X})^2/X_i$=$n\gamma ((\overline{X}/N)$ $\sum_{i=1}^N A_i^2/$ $X_i-\bar{A}^2)$, where $\bar{A}$=$\sum_{i=1}^N A_i/N$ and $\gamma$= $\sum_{i=1}^n N_i(N_i-1)/N(N-1)$. Thus we have $\sigma^2_3$=$\lim_{\nu\rightarrow\infty}$ $n\gamma((\overline{X}/N)\sum_{i=1}^N A_i^2/X_i-\bar{A}^2)$. 
\par

Next, note that the limit in the expression of $\sigma^2_4$ exists in view of C$3$. Substituting $\pi_i$=$nX_i/\sum_{i=1}^N X_i$ in $\nabla g(\mu_0)\Sigma_1\nabla g(\mu_0)^T$ for any HE$\pi$PS sampling design, we get $\sigma_4^2$=$\lim_{\nu\rightarrow \infty}nN^{-2}[\sum_{i=1}^{N} A_i^2( \sum_{i=1}^N X_i/n X_i-1)-(\sum_{i=1}^{N}A_i(1-n X_{ i}/\sum_{i=1}^N X_i))^2/\sum_{i=1}^{N} (n X_i/$ $\sum_{i=1}^N X_i)(1-n X_i/\sum_{i=1}^N X_i)]$=  $\lim_{\nu\rightarrow\infty}\big\{(1/N)\sum_{i=1}^N A_i^2\big((\overline{X}/X_i)-(n/N)\big) -\phi^{-1} \overline{X}^{-1}\big((n/N)$ $\times\sum_{i=1}^N A_i X_i/N-\overline{A}$ $\overline{X}\big)^2 \big\}$.  Further, we can show that $\sigma^2_4$=$\lim_{\nu\rightarrow\infty}((\overline{X}/N)\sum_{i=1}^N A_i^2/X_i-\bar{A}^2)$, when C$1$ and C$2$ hold, and C$0$ holds with $\lambda$=$0$.  It also follows from Lemma S\ref{lem 6} that $n\gamma\rightarrow 1$ as $\nu\rightarrow\infty$, when C$0$ holds with $\lambda$=$0$. Thus we have $\sigma^2_3$=$\sigma^2_4$=$\lim_{\nu\rightarrow\infty}((\overline{X}/N)\sum_{i=1}^N A_i^2/$ $X_i-\bar{A}^2)$.  This completes the proof of $(ii)$ in Lemma \ref{thm 8}.
\end{proof}

\begin{lemma}\label{lem 5}
Suppose that $C0$ through C$2$ hold. Then under SRSWOR, LMS sampling design and any HE$\pi$PS sampling design, we have
$$
(i)\quad u^{*}=\max_{i\in s}|Z_i|=o_p(\sqrt{n}),\text{ and }\quad(ii)\quad \sum_{i\in s}\pi_i^{-1} Z_i/\sum_{i\in s}\pi_i^{-1} Z_i^2=O_p(1/\sqrt{n})
$$
as $\nu\rightarrow\infty$, where $Z_i$=$X_i-\overline{X}$ for $i$=$1,\ldots,N$
\end{lemma}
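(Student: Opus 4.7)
I would tackle (i) and (ii) separately, using Lemma S\ref{lem 2} as the main tool for (ii).

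For (i), I would first use C1 and C2 to deduce that the $X_i$'s are uniformly bounded in $\nu$. The proof of Lemma S\ref{lem 1} already extracts $\max_{1\le i\le N}X_i/\min_{1\le i\le N}X_i = O(1)$ from C2, and the moment bound $N^{-1}\sum_i X_i^{2+\eta} = O(1)$ used in the proof of Lemma S\ref{lem 4} yields $\overline X = O(1)$ via Jensen's inequality. Combining $\min_i X_i \le \overline X$ with the bounded ratio gives $\max_i X_i = O(1)$, so $|Z_i| \le 2\max_i X_i = O(1)$ deterministically. Since $n\to\infty$ under C0, this yields $u^* \le \max_{1\le i\le N}|Z_i| = O(1) = o_p(\sqrt n)$, which is (i).

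For (ii), I would scale by $1/N$ and treat the numerator and denominator separately. For the numerator, using $\sum_{i=1}^N Z_i = 0$, the identity
\[
N^{-1}\sum_{i\in s}\pi_i^{-1}Z_i \;=\; (\hat{\overline X}_1 - \overline X) \;-\; \overline X\,\bigl(\textstyle\sum_{i\in s}(N\pi_i)^{-1} - 1\bigr)
\]
combined with Lemma S\ref{lem 2}, which gives $\sqrt n(\hat{\overline X}_1 - \overline X) = O_p(1)$ and $\sqrt n(\sum_{i\in s}(N\pi_i)^{-1} - 1) = O_p(1)$ under each of SRSWOR, LMS, and any HE$\pi$PS design, together with $\overline X = O(1)$, yields $\sum_{i\in s}\pi_i^{-1}Z_i = O_p(N/\sqrt n)$. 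For the denominator, expanding $Z_i^2 = X_i^2 - 2\overline X X_i + \overline X^{\,2}$ gives
\[
N^{-1}\sum_{i\in s}\pi_i^{-1}Z_i^2 \;=\; \widehat{\overline{X^2}}_1 \;-\; 2\overline X\,\hat{\overline X}_1 \;+\; \overline X^{\,2}\sum_{i\in s}(N\pi_i)^{-1},
\]
where $\widehat{\overline{X^2}}_1 = \sum_{i\in s}(N\pi_i)^{-1}X_i^2$. Applying Lemma S\ref{lem 2} to $\mathbf W_i$ (whose third coordinate is $X_i^2$) gives $\widehat{\overline{X^2}}_1 = \overline{X^2} + o_p(1)$, and similarly for the other two terms, so the right-hand side equals $\overline{X^2} - \overline X^{\,2} + o_p(1) = (N-1)S_x^2/N + o_p(1)$.

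Under the paper's implicit non-degeneracy of the auxiliary variable --- namely $\liminf_\nu S_x^2 > 0$, which is built into C2 (otherwise the information feeding the ratio and regression estimators would vanish) --- the denominator is $O_p(N)$ and bounded away from zero in probability. Dividing therefore gives
\[
\frac{\sum_{i\in s}\pi_i^{-1}Z_i}{\sum_{i\in s}\pi_i^{-1}Z_i^2} \;=\; O_p\!\bigl((N/\sqrt n)/N\bigr) \;=\; O_p(1/\sqrt n),
\]
proving (ii). The only delicate point is this denominator step: everything else is a routine application of Lemma S\ref{lem 2} and the deterministic bound $|Z_i| = O(1)$ from part (i), but establishing that the limiting population variance of $X$ is strictly positive is the one ingredient that must be pulled from the paper's non-degeneracy conventions rather than from Lemma S\ref{lem 2} itself.
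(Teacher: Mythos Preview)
Your proof is correct and follows essentially the same route as the paper's: a deterministic $O(1)$ bound on $\max_i|Z_i|$ for (i), and Lemma~S\ref{lem 2} (or its argument) applied to numerator and denominator separately for (ii). The only cosmetic differences are that the paper wraps (i) in an unnecessary Markov-inequality step and attributes the lower bound $\liminf_\nu N^{-1}\sum_i Z_i^2>0$ to C1 rather than C2.
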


\begin{proof}
Let $P(s)$ be any sampling design and $E_P$ be the expectation with respect to $P(s)$. Then, $E_P(u^*/\sqrt{n})\leq (\max_{1\leq i\leq N} X_i+\overline{X})/\sqrt{n}\leq \overline{X}(\max_{1\leq i\leq N} X_i/$ $\min_{1\leq i\leq N} X_i+1)/\sqrt{n}$=$o(1)$ as $\nu\rightarrow\infty$ since C$1$ and C$2$ hold. Therefore, $(i)$ holds under $P(s)$ by Markov inequality. Thus $(i)$ holds under SRSWOR, LMS sampling design and any HE$\pi$PS sampling design. 
\par
\vspace{.2cm}

Using similar arguments as in the first paragraph of the proof of Lemma S\ref{lem 2}, it can be shown that $\sqrt{n}(\sum_{i\in s} Z_i/N\pi_i-\overline{Z})$=$\sqrt{n}\sum_{i\in s} Z_i/N\pi_i$=$O_p(1)$ and $\sum_{i\in s} Z_i^2/N\pi_i-\sum_{i=1}^N Z_i^2/N$=$o_p(1)$ as $\nu\rightarrow\infty$ under a high entropy sampling design $P(s)$ satisfying \eqref{eq 1} in Lemma S\ref{lem 1}. Therefore, $1/(\sum_{i\in s}Z_i^2/N\pi_i)$ =$O_p(1)$ as $\nu\rightarrow\infty$ under $P(s)$ since $\sum_{i=1}^N Z_i^2/N$ is bounded away from $0$ as $\nu\rightarrow\infty$ by C$1$. Thus under $P(s)$, $\sum_{i\in s}\pi_i^{-1} Z_i/$ $\sum_{i\in s}\pi_i^{-1} Z_i^2$=$O_p(1/\sqrt{n})$ as $\nu\rightarrow\infty$.
\par
\vspace{.2cm}

It follows from Lemma S\ref{lem 1} that SRSWOR and LMS sampling design are high entropy sampling designs and satisfy \eqref{eq 1} Also, any HE$\pi$PS sampling design satisfies \eqref{eq 1} since C$2$ holds. Therefore, the result in $(ii)$ holds under the above-mentioned sampling designs.
\end{proof}
\section{Proofs of Remark $1$ and Theorems $2$, $3$, $6$ and $7$}
In this section, we give the proofs of Remark $1$ and Theorems $2$, $3$, $6$ and $7$ of the main text.
\begin{proof}[\textbf{Proof of Theorem $2$}] 
 Let us first consider a HE$\pi$PS sampling design. Then, it can be shown in the same way as in the $1^{st}$ paragraph of the proof of Theorem $1$ that $\sqrt{n}(\hat{\overline{h}}_{PEML}-\hat{\overline{h}}_{GREG})$=$o_p(1)$ for $d(i,s)$=$(N\pi_i)^{-1}$ under this sampling design. It can also be shown in the same way as in the $1^{st}$ paragraph of the proof of Theorem $1$ that if $\hat{\overline{h}}$ is one of $\hat{\overline{h}}_{HT}$, $\hat{\overline{h}}_{H}$, and $\hat{\overline{h}}_{GREG}$ and $\hat{\overline{h}}_{PEML}$ with $d(i,s)$=$(N\pi_i)^{-1}$, then $(4)$ in the proof of Theorem $1$ holds under the above-mentioned sampling design. Here, we recall from Table $2$ in the main text that the HT, the ratio and the product estimators coincide under any HE$\pi$PS sampling design. Further, the asymptotic MSE of $\sqrt{n}(g(\hat{\overline{h}})-g(\overline{h}))$ is $\nabla g(\mu_0) \Gamma_1$ $(\nabla g(\mu_0))^T$, where $\mu_0$=$\lim_{\nu\rightarrow\infty}\overline{h}$, $\Gamma_1$=$\lim_{\nu\rightarrow\infty}nN^{-2}\sum_{i=1}^{N}(\textbf{V}_i-\textbf{T}\pi_i)^T(\textbf{V}_i-\textbf{T}\pi_i)(\pi_i^{-1}-1)$, and $\textbf{V}_i$ in $\Gamma_1$ is $h_i$ or $h_i-\overline{h}$ or $h_i-\overline{h}-S_{xh}(X_i-\overline{X})/S_x^2$ if $\hat{\overline{h}}$ is $\hat{\overline{h}}_{HT}$ or $\hat{\overline{h}}_{H}$, or $\hat{\overline{h}}_{GREG}$ with $d(i,s)$=$(N\pi_i)^{-1}$, respectively. Now, since $\sqrt{n}(\hat{\overline{h}}_{PEML}-\hat{\overline{h}}_{GREG})$=$o_p(1)$ for $\nu\rightarrow\infty$ under any HE$\pi$PS sampling design, $g(\hat{\overline{h}}_{GREG})$ and $g(\hat{\overline{h}}_{PEML})$ have the same asymptotic distribution under this sampling design. Thus under any HE$\pi$PS sampling design, $g(\hat{\overline{h}}_{GREG})$ and $g(\hat{\overline{h}}_{PEML})$ with $d(i,s)$=$(N\pi_i)^{-1}$ form class $5$, $g(\hat{\overline{h}}_{HT})$ forms class $6$, and $g(\hat{\overline{h}}_{H})$ forms class $7$ in Table $2$ of the main text. This completes the proof of $(i)$ in Theorem $2$.
\par

Let us  now  consider the RHC sampling design. We can show from  $(ii)$ in  Lemma S\ref{lem 3} that $\sqrt{n}(\hat{\overline{h}}-\overline{h})\xrightarrow{\mathcal{L}}N(0,\Gamma)$ as $\nu\rightarrow\infty$ for some p.d. matrix $\Gamma$, when $\hat{\overline{h}}$ is either $\hat{\overline{h}}_{RHC}$ or $\hat{\overline{h}}_{GREG}$ with $d(i,s)$=$G_i/NX_i$ under RHC sampling design. Further, $\sqrt{n}(\hat{\overline{h}}_{PEML}-\hat{\overline{h}}_{GREG})$=$o_p(1)$ as $\nu\rightarrow\infty$ for $d(i,s)$=$G_i/NX_i$ under RHC sampling design since  C$2$ holds, and $S^2_x$ is bounded away from $0$ as $\nu\rightarrow\infty$ (see A$2.2$ of Appendix $2$ in \cite{MR1707846}). Therefore, if $\hat{\overline{h}}$ is one of $\hat{\overline{h}}_{RHC}$, and $\hat{\overline{h}}_{GREG}$ and $\hat{\overline{h}}_{PEML}$ with $d(i,s)$=$G_i/N X_i$,  then we have
\begin{equation}\label{eq 7} 
\sqrt{n}(g(\hat{\overline{h}})-g(\overline{h}))\xrightarrow{\mathcal{L}}N(0,\Delta^2)\text{ as }\nu\rightarrow\infty
\end{equation}
for some $\Delta^2>0$ by the delta method and the condition $\nabla g(\mu_0)\neq 0$ at $\mu_0$=$\lim_{\nu\rightarrow\infty}\overline{h}$.  Moreover, it follows from the proof of  $(ii)$ in  Lemma S\ref{lem 3} that  $\Delta^2$= $\nabla g(\mu_0) \Gamma_2 (\nabla g(\mu_0))^T$, where $\Gamma_2$=$\lim_{\nu\rightarrow\infty}n\gamma\overline{X}N^{-1}\sum_{i=1}^{N}(\textbf{V}_i-X_i\overline{\textbf{V}}/\overline{X})^T (\textbf{V}_i-X_i\overline{\textbf{V}}/\overline{X})/ X_i$. It further follows from Table \ref{table B} in this supplement that $\textbf{V}_i$ in $\Gamma_2$ is $h_i$ if $\hat{\overline{h}}$ is $\hat{\overline{h}}_{RHC}$. Also, $\textbf{V}_i$ in $\Gamma_2$ is $h_i-\overline{h}-S_{xh}(X_i-\overline{X})/S_x^2$ if $\hat{\overline{h}}$ is $\hat{\overline{h}}_{GREG}$ with $d(i,s)$=$G_i/NX_i$.  Now, $g(\hat{\overline{h}}_{GREG})$ and $g(\hat{\overline{h}}_{PEML})$ have the same asymptotic distribution under RHC sampling design since $\sqrt{n}(\hat{\overline{h}}_{PEML}-\hat{\overline{h}}_{GREG})$=$o_p(1)$ for $\nu\rightarrow\infty$ under this sampling design as pointed out earlier in this paragraph. Thus $g(\hat{\overline{h}}_{GREG})$ and $g(\hat{\overline{h}}_{PEML})$  with $d(i,s)$=$G_i/NX_i$ under RHC sampling design form class $8$, and $g(\hat{\overline{h}}_{RHC})$ forms class $9$ in Table $2$ of the main article. This completes the proof of $(ii)$ in Theorem $2$. 
\end{proof}
\begin{proof}[\textbf{Proof of Remark $1$}]
It follows from \textsl{(ii)} in Lemma S\ref{thm 8} that in the case of $\lambda$=$0$, 
\begin{equation}\label{eq 5}
\sigma^2_{3}=\sigma^2_{4}=\lim_{\nu\rightarrow\infty}((\overline{X}/N)\sum_{i=1}^N A_i^2/X_i-\bar{A}^2),
\end{equation} 
where $\sigma^2_3$ and $\sigma^2_4$ are as defined in the statement of Lemma S\ref{thm 8}, and $A_i$=$ \nabla g(\mu_0) \textbf{V}_i^T$ for different choices of $\textbf{V}_i$ mentioned in the proof of Theorem $2$ above. Thus $g(\hat{\overline{h}}_{GREG})$ with $d(i,s)$=$(N\pi_i)^{-1}$ under any HE$\pi$PS sampling design, and with $d(i,s)$=$G_i/NX_i$ under RHC sampling design have the same asymptotic MSE. Therefore, class $8$ is merged with class $5$ in Table $2$ of the main text. Further, \eqref{eq 5} implies that $g(\hat{\overline{h}}_{HT})$ under any HE$\pi$PS sampling design and $g(\hat{\overline{h}}_{RHC})$ have the same asymptotic MSE. Therefore, class $9$ is merged with class $6$ in Table $2$ of the main text. This completes the proof of Remark $1$. 
\end{proof}
\begin{proof}[\textbf{Proof of Theorem $3$}] Recall the expression of $A_i$'s from the proofs of Theorem $1$ and Remark $1$.  Note that $\lim_{\nu\rightarrow\infty}\sum (A_i-\bar{A})^2/N$=$\lim_{\nu\rightarrow\infty}\sum (B_i-\bar{B})^2/N$, $\lim_{\nu\rightarrow\infty} n\gamma  \big((\overline{X}/N)$ $\times\sum_{i=1}^N A_i^2/X_i-\bar{A}^2\big)$=$\lim_{\nu\rightarrow\infty} n\gamma  \big((\overline{X}/N)\sum_{i=1}^N B_i^2/X_i$ $-\bar{B}^2\big)$  and $\lim_{\nu\rightarrow\infty}\big\{(1/N)\sum_{i=1}^N A_i^2\times$ $\big((\overline{X}/X_i)-(n/N)\big) -\phi^{-1} \overline{X}^{-1}\big((n/N)\sum_{i=1}^N A_i X_i/N-\bar{A} \overline{X}\big)^2 \big\}$=$\lim_{\nu\rightarrow\infty}\big\{(1/N)\sum_{i=1}^N B_i^2\times$ $\big((\overline{X}/X_i)-(n/N)\big) -\phi^{-1} \overline{X}^{-1} \big((n/N)\sum_{i=1}^N B_i X_i/N-\bar{B} \overline{X}\big)^2 \big\}$  for $B_i$=$\nabla g(\overline{h})\textbf{V}_i^T$ and $\textbf{V}_i$ as in Table \ref{table B} in this supplement since $\nabla g(\overline{h})\rightarrow \nabla g(\mu_0)$ as $\nu\rightarrow\infty$.  Here, $\phi$=$\overline{X}-(n/N)\sum_{i=1}^N X_i^2/N\overline{X}$.  Then, from  Lemma S\ref{thm 8} and the expressions of asymptotic MSEs of $\sqrt{n}(g(\hat{\overline{h}})-g(\overline{h}))$ discussed  in the proofs of Theorems $1$ and $2$ , the results in Table $3$ of the main text follow.  This completes the proof of Theorem $3$.
\end{proof}
\begin{proof}[\textbf{Proof of Theorem $6$}] 
 Using similar arguments as in the $1^{st}$ paragraph of the proof of Theorem $4$, we can say that under SRSWOR and LMS sampling design, conclusions of  Theorems $1$ and $3$  hold \textit{a.s.} $[\mathbb{P}]$ for $d$=$1$, $p$=$2$, $h(y)$=$(y,y^2)$ and $g(s_1,s_2)$=$s_2-s_1^2$  in the same way as conclusions of Theorems $1$ and $3$ hold \textit{a.s.} $[\mathbb{P}]$ for $d$=$p$=$1$, $h(y)$=$y$ and $g(s)$=$s$ in the $1^{st}$ paragraph of the proof of Theorem $4$.  Note that $W_i$=$Y_i^2-2Y_i\overline{Y}$ for the above choices of $h$ and $g$. Further, it follows from SLLN and the condition $E_{\mathbb{P}}(\epsilon_i)^8<\infty$ that the $\Delta^2_i$'s in Table $3$ in the main text can be expressed in terms of superpopulation moments of $(Y_i,X_i)$ \textit{a.s.} $[\mathbb{P}]$. Note that $\Delta^2_2-\Delta^2_1$=$cov_{\mathbb{P}}^2(\tilde{W_i},X_i)$ \textit{a.s.} $[\mathbb{P}]$, where $\tilde{W}_i$=$Y_i^2-2Y_i E_{\mathbb{P}}(Y_i)$. Then, $\Delta^2_1< \Delta^2_2$ \textit{a.s.} $[\mathbb{P}]$.  This completes the proof of $(i)$ in Theorem $6$.
\par

 Next consider the case of  $0\leq \lambda< E_{\mathbb{P}}(X_i)/b$.  Using the same line of arguments as in the $2^{nd}$ paragraph of the proof of Theorem $4$, it can be shown that under RHC and any HE$\pi$PS sampling designs, conclusions of  Theorems $2$ and $3$  hold \textit{a.s.} $[\mathbb{P}]$ for $d$=$1$, $p$=$2$, $h(y)$=$(y,y^2)$ and $g(s_1,s_2)$=$s_2-s_1^2$  in the same way as conclusions of Theorems $2$ and $3$ hold \textit{a.s.} $[\mathbb{P}]$ for $d$=$p$=$1$, $h(y)$=$y$ and $g(s)$=$s$ in the $2^{nd}$ paragraph of the proof of Theorem $4$.  Note that  $\Delta^2_7-\Delta^2_5$=$\big\{\mu_1^2 cov_{\mathbb{P}}(\tilde{W}_i,X_i)\big(cov_{\mathbb{P}}(\tilde{W}_i,X_i)cov_{\mathbb{P}}(X_i,$ $1/X_i)-2cov_{\mathbb{P}}(\tilde{W}_i,1/X_i)\big)\big\}-\lambda^2 cov^2_{\mathbb{P}}(\tilde{W}_i,X_i)/\chi\mu_1-\lambda cov^2_{\mathbb{P}}(\tilde{W}_i,X_i)\leq \big\{\mu_1^2\times$ $cov_{\mathbb{P}}(\tilde{W}_i,X_i)\big(cov_{\mathbb{P}}(\tilde{W}_i,X_i)cov_{\mathbb{P}}(X_i,1/X_i)-2cov_{\mathbb{P}}(\tilde{W}_i,1/X_i)\big)\big\}$ \textit{a.s.} $[\mathbb{P}]$ because $\chi>0$.  Recall from C$6$ that $\xi$=$\mu_3-\mu_2\mu_1$ and $\mu_j$=$E_{\mathbb{P}}(X_i)^j$ for $j$=$-1,1,2,3$. Then, from the linear model set up,  we have $\big\{\mu_1^2 cov_{\mathbb{P}}(\tilde{W}_i,X_i)\times$ $\big(cov_{\mathbb{P}}(\tilde{W}_i,X_i)cov_{\mathbb{P}}(X_i,$ $1/X_i)-2cov_{\mathbb{P}}(\tilde{W}_i,1/X_i)\big)\big\}$=$(\beta^2\mu_1)^2(\xi-2\mu_1)((\xi+2\mu_1)\zeta_1-2\zeta_2)$.  Here, $\zeta_1$=$1-\mu_1\mu_{-1}$ and $\zeta_2$=$\mu_1-\mu_2\mu_{-1}$. Note that $(\xi+2\mu_1)\zeta_1-2\zeta_2$=$\xi\zeta_1+2\mu_{-1}$ and $\zeta_1<0$. Therefore,  $\big\{\mu_1^2 cov_{\mathbb{P}}(\tilde{W}_i,X_i) \big(cov_{\mathbb{P}}(\tilde{W}_i,X_i)cov_{\mathbb{P}}(X_i,$ $1/X_i)-2cov_{\mathbb{P}}(\tilde{W}_i,1/X_i)\big)\big\}<0$ if $\xi>2\max\{\mu_1,\mu_{-1}/(\mu_1\mu_{-1}-1)\}$. Hence, $\Delta^2_7-\Delta^2_5<0$ \textit{a.s.} $[\mathbb{P}]$. This completes the proof of $(ii)$ in Theorem $6$. 
\end{proof}
\begin{proof}[\textbf{Proof of Theorem $7$}] Using the same line of arguments as in the $1^{st}$ paragraph of the proof of Theorem $4$, it can be shown that under SRSWOR and LMS sampling design, conclusions of Theorems $1$ and $3$ hold \textit{a.s.} $[\mathbb{P}]$ for $d$=$2$, $p$=$5$, $h(z_1,z_2)$=$(z_1,z_2,z_1^2$ $,z_2^2,$ $z_1z_2)$ and $g(s_1,s_2,s_3,s_4,s_5)$=$(s_5-s_1s_2)/((s_3-s_1^2)(s_4-s_2^2))^{1/2}$ in the case of the correlation coefficient between $z_1$ and $z_2$, and for $d$=$2$, $p$=$4$, $h(z_1,z_2)$=$(z_1, z_2, z^2_2, z_1 z_2)$ and $g(s_1,s_2,s_3, s_4)$= $(s_4-s_1 s_2)/(s_3-s^2_2)$ in the case of the regression coefficient of $z_1$ on $z_2$  in the same way as conclusions of Theorems $1$ and $3$ hold \textit{a.s.} $[\mathbb{P}]$ for $d$=$p$=$1$, $h(y)$=$y$ and $g(s)$=$s$ in the case of the mean of $y$ in the $1^{st}$ paragraph of the proof of Theorem $4$.  Further, if C$0$ holds with  $0\leq \lambda< E_{\mathbb{P}}(X_i)/b$,  then using similar arguments as in the $2^{nd}$ paragraph of the proof of Theorem $4$, it can also be shown that under RHC and any HE$\pi$PS sampling designs, conclusions of  Theorems $2$ and $3$  hold \textit{a.s.} $[\mathbb{P}]$ for  $d$=$2$, $p$=$5$, $h(z_1,z_2)$=$(z_1,z_2,z_1^2$ $,z_2^2,$ $z_1z_2)$ and $g(s_1,s_2,s_3,s_4,s_5)$=$(s_5-s_1s_2)/((s_3-s_1^2)(s_4-s_2^2))^{1/2}$ in the case of the correlation coefficient between $z_1$ and $z_2$, and for $d$=$2$, $p$=$4$, $h(z_1,z_2)$=$(z_1, z_2, z^2_2, z_1 z_2)$ and $g(s_1,s_2,s_3, s_4)$=$(s_4-s_1 s_2)/(s_3-s^2_2)$ in the case of the regression coefficient of $z_1$ on $z_2$ in the same way as conclusions of Theorems $2$ and $3$ hold \textit{a.s.} $[\mathbb{P}]$ for $d$=$p$=$1$, $h(y)$=$y$ and $g(s)$=$s$ in the case of the mean of $y$ in the $2^{nd}$ paragraph of the proof of Theorem $4$.  Note that $W_i$=$R_{12}[(\overline{Z}_1/S_1^2-\overline{Z}_2/S_{12})Z_{1i}+(\overline{Z}_2/S_2^2-\overline{Z}_1/S_{12})Z_{2i}-Z_{1i}^2/2S_1^2-Z_{2i}^2/2S_2^2+Z_{1i}Z_{2i}/S_{12}]$ for the correlation coefficient, and $W_i$=$(1/S_2^2)[-\overline{Z}_2 Z_{1i}-(\overline{Z}_1-2S_{12}\overline{Z}_2/S_2^2)Z_{2i}-S_{12}Z_{2i}^2/S_2^2+Z_{1i}Z_{2i}]$ for the regression coefficient. Here, $\overline{Z}_1$=$\sum_{i=1}^N Z_{1i}/N$, $\overline{Z}_2$=$\sum_{i=1}^N Z_{2i}/N$, $S_1^2$=$\sum_{i=1}^N Z_{1i}^2$ $/N-\overline{Z}_1^2$, $S_2^2$=$\sum_{i=1}^N Z_{2i}^2/N-\overline{Z}_2^2$, $S_{12}$=$\sum_{i=1}^N Z_{1i}$ $Z_{2i}/N-\overline{Z}_1\overline{Z}_2$ and $R_{12}$=$S_{12}/S_1S_2$. Also, note that since $E_{\mathbb{P}}||\epsilon_i||^8<\infty$, the $\Delta^2_i$'s in Table $3$ in the main text can be expressed in terms of superpopulation moments of $(h(Z_{1i},Z_{2i}),X_i)$ \textit{a.s.} $[\mathbb{P}]$ for both the parameters by SLLN. Further, for the above parameters, we have $\Delta^2_2-\Delta^2_1$=$cov_{\mathbb{P}}^2(\tilde{W_i},X_i)>0$ and  $\Delta^2_7-\Delta^2_5$=$\big\{\mu_1^2 cov_{\mathbb{P}}(\tilde{W}_i,X_i)\big(cov_{\mathbb{P}}(\tilde{W}_i,X_i)cov_{\mathbb{P}}(X_i,$ $1/X_i)-2cov_{\mathbb{P}}(\tilde{W}_i,1/X_i)\big)\big\}-\lambda^2 cov^2_{\mathbb{P}}(\tilde{W}_i,X_i)/\chi\mu_1-\lambda cov^2_{\mathbb{P}}(\tilde{W}_i,X_i)\leq \big\{\mu_1^2 cov_{\mathbb{P}}(\tilde{W}_i,X_i)\big(cov_{\mathbb{P}}(\tilde{W}_i,X_i)\times$ $cov_{\mathbb{P}}(X_i,1/X_i)-2cov_{\mathbb{P}}(\tilde{W}_i,1/X_i)\big)\big\}$  \textit{a.s.} $[\mathbb{P}]$, where $\tilde{W}_i$ is the same as $W_i$ with all finite population moments in the expression of $W_i$ replaced by their corresponding superpopulation moments. Also, from the linear model set up, we have  $\big\{\mu_1^2 cov_{\mathbb{P}}(\tilde{W}_i,X_i)\big(cov_{\mathbb{P}}(\tilde{W}_i,X_i)cov_{\mathbb{P}}(X_i,$ $1/X_i)-2cov_{\mathbb{P}}(\tilde{W}_i,1/X_i)\big)\big\}$=$K(\xi-2\mu_1)((\xi+2\mu_1)\zeta_1-2\zeta_2)$  for some constant $K> 0$ in the case of the correlation coefficient, and  $\big\{\mu_1^2 cov_{\mathbb{P}}(\tilde{W}_i,X_i)\times$ $\big(cov_{\mathbb{P}}(\tilde{W}_i,X_i)cov_{\mathbb{P}}(X_i,1/X_i)-2cov_{\mathbb{P}}(\tilde{W}_i,1/X_i)\big)\big\}$ =$K^{\prime}(\xi-2\mu_1)((\xi+2\mu_1)\zeta_1-2\zeta_2)$  for some constant $K^{\prime}> 0$ in the case of the regression coefficient. Thus proofs of both the parts of the theorem follow in the same way as the proof of Theorem $6$.
\end{proof}
\section{Comparison of estimators with their bias-corrected versions}\label{sec S2}
In this section, we empirically compare the biased estimators considered in Table $5$ in Section $4$ of the main text with their bias-corrected versions based on both synthetic and real data used in Section $4$. Following the idea in \cite{stefan2022jackknife}, we consider the bias-corrected jackknife estimator corresponding to each of the biased estimators considered in Table $5$ of the main article. For the mean, we consider the bias-corrected jackknife estimators corresponding to the GREG and the PEML estimators under each of SRSWOR, RS and RHC sampling designs, and the H\'ajek estimator under RS sampling design. On the other hand, for each of the variance, the correlation coefficient and the regression coefficient, we consider the bias-corrected jackknife estimators corresponding to the estimators that are obtained by plugging in the H\'ajek and the PEML estimators under each of SRSWOR and RS sampling design, and the PEML estimator under RHC sampling design.
\par

Suppose that $s$ is a sample of size $n$ drawn using one of the sampling designs given in Table $5$ of the main text. Further, suppose that $s_{-i}$ is the subset of $s$, which excludes the $i^{th}$ unit for any given $i\in s$. Now, for any $i\in s$, let us denote the estimator $g(\hat{\overline{h}})$ constructed based on $s_{-i}$ by $g(\hat{\overline{h}}_{-i})$. Then, we compute the bias-corrected jackknife estimator of $g(\overline{h})$ corresponding to $g(\hat{\overline{h}})$ as $n g(\hat{\overline{h}})-(n-1) \sum_{i\in s} g(\hat{\overline{h}}_{-i})/n$ (cf. \cite{stefan2022jackknife}). Recall from Section $4$ in the main article that we draw $I$=$1000$ samples each of sizes $n$=$75$, $100$ and $125$ from some synthetic as well as real datasets using sampling designs mentioned in Table $5$ and compute MSEs of the estimators considered in Table $5$ based on these samples. Here, we compute MSEs of the above-mentioned bias-corrected jackknife estimators using the same procedure and compare them with the original biased estimators in terms of their MSEs. We observe from the above analyses that for all the parameters considered in Section $4$ of the main text, the bias-corrected jackknife estimators become worse than the original biased estimators in the cases of both the synthetic and the real data (see Tables \ref{table 1} through \ref{table 5} and \ref{table 11} through \ref{table 20} in Sections \ref{sec S3} and \ref{sec S4} below). Despite reducing the biases of the original biased estimators, bias-correction increases the variances of these estimators significantly. This is the reason why the bias-corrected jackknife estimators have larger MSEs than the original biased estimators in the cases of both the synthetic and the real data. 
\section{ Analysis based on synthetic data}\label{sec S3}
The results obtained from the analysis carried out in Section $4.1$ of the main paper and Section \ref{sec S2} in this supplement are summarized in these sections.  Here, we provide some tables that were mentioned in these sections.  Tables \ref{table 1} through \ref{table 5} contain relative efficiencies of estimators for the mean, the variance, the correlation coefficient and the regression coefficient in the population.  Tables \ref{table 6} through \ref{table 10} contain the average and the standard deviation of lengths of asymptotically $95\%$ CIs of the above parameters.  
  
\begin{table}[h]
\caption{Relative efficiencies of estimators for mean of $y$.}
\label{table 1}
\begin{threeparttable}[b]
\centering
\renewcommand{\arraystretch}{0.9}
\begin{tabular}{|c|c|c|c|} 
\hline
\backslashbox{Relative efficiency}{Sample size}
&  $n$=$75$ & $n$=$100$ & $n$=$125$  \\ 
 \hline
 RE($\hat{\overline{Y}}_{PEML}$, SRSWOR $|$ $\hat{\overline{Y}}_{GREG}$, SRSWOR)& $1.049985$& $1.020252$& $1.035038$\\
 RE($\hat{\overline{Y}}_{PEML}$, SRSWOR $|$ $\hat{\overline{Y}}_{H}$, RS)& $4.870516$& $5.370899$& $4.987635$\\
 RE($\hat{\overline{Y}}_{PEML}$, SRSWOR $|$ $\hat{\overline{Y}}_{HT}$, RS )& $2.026734$& $2.061607$& $2.027386$\\
 RE($\hat{\overline{Y}}_{PEML}$, SRSWOR $|$ $\hat{\overline{Y}}_{PEML}$, RS)&  $1.144439$& $1.124697$& $1.170224$\\
 RE($\hat{\overline{Y}}_{PEML}$, SRSWOR $|$ $\hat{\overline{Y}}_{GREG}$, RS)& $1.144455$& $1.124975$& $1.170267$\\ 
 RE($\hat{\overline{Y}}_{PEML}$, SRSWOR$|$ $\hat{\overline{Y}}_{RHC}$, RHC )&  $2.022378$& $1.978623$& $2.143015$\\
 RE($\hat{\overline{Y}}_{PEML}$, SRSWOR $|$ $\hat{\overline{Y}}_{PEML}$, RHC)&   $1.089837$& $1.030332$& $1.094067$\\
 RE($\hat{\overline{Y}}_{PEML}$, SRSWOR $|$ $\hat{\overline{Y}}_{GREG}$, RHC)& $1.089853$& $1.030587$&$1.094108$\\
\hline
 RE($\hat{\overline{Y}}_{PEML}$, SRSWOR $|$ \tnote{1} $\hat{\overline{Y}}_{BCPEML}$, SRSWOR)& $1.050461$& $1.021275$& $1.038282$\\
RE($\hat{\overline{Y}}_{GREG}$, SRSWOR $|$ \tnote{1} $\hat{\overline{Y}}_{BCGREG}$, SRSWOR)& $1.002649$& $1.003156$& $1.005397$\\
RE($\hat{\overline{Y}}_{H}$, RS $|$ \tnote{1} $\hat{\overline{Y}}_{BCH}$, RS)&$1.036379$& $1.006945$& $1.12841$\\
RE($\hat{\overline{Y}}_{PEML}$, RS $|$ \tnote{1} $\hat{\overline{Y}}_{BCPEML}$, RS)& $1.016953$& $1.013402$& $1.011762$\\
RE($\hat{\overline{Y}}_{GREG}$, RS $|$ \tnote{1} $\hat{\overline{Y}}_{BCGREG}$, RS)& $1.016692$& $1.011597$& $1.011493$\\
RE($\hat{\overline{Y}}_{PEML}$, RHC $|$ \tnote{1} $\hat{\overline{Y}}_{BCPEML}$, RHC)&$1.01914$ & $1.02292$& $1.024689$ \\
RE($\hat{\overline{Y}}_{GREG}$, RHC $|$ \tnote{1} $\hat{\overline{Y}}_{BCGREG}$, RHC)& $1.011583$& $1.052311$& $1.023058$\\
\hline
\end{tabular}
\begin{tablenotes}
\item[1] BCPEML=Bias-corrected PEML estimator, BCH=Bias-corrected H\'ajek estimator, and  BCGREG=Bias-corrected GREG estimator. 
\end{tablenotes}
\end{threeparttable}

\footnotetext[1]{}
\end{table}

\begin{table}[h!]
\renewcommand{\arraystretch}{0.7}
\caption{Relative efficiencies of estimators for variance of $y$. Recall from Table $4$ in Section $2$ that for variance of $y$, $h(y)$=$(y^2,y)$ and $g(s_1,s_2)$=$s_1-s_2^2$.}
\label{table 2}
\begin{threeparttable}[b]
\centering
\begin{tabular}{|c|c|c|c|} 
\hline
\backslashbox{Relative efficiency}{Sample size}
&  $n$=$75$ & $n$=$100$ & $n$=$125$  \\ 
 \hline
 RE($g(\hat{\overline{h}}_{PEML})$, SRSWOR $|$ $g(\hat{\overline{h}}_{H})$, SRSWOR)  &  $1.0926$& $1.0848$&$1.0419$\\ 
 RE($g(\hat{\overline{h}}_{PEML})$, SRSWOR $|$ $g(\hat{\overline{h}}_{H})$, RS) & $1.0367$& $1.0435$& $1.0226$\\
 RE($g(\hat{\overline{h}}_{PEML})$, SRSWOR $|$ $g(\hat{\overline{h}}_{PEML})$, RS) &  $ 1.15067$& $1.136$& $1.1635$\\
 RE($g(\hat{\overline{h}}_{PEML})$, SRSWOR $|$ $g(\hat{\overline{h}}_{PEML})$, RHC)  &  $1.141$& $1.1849$& $1.1631$\\
 \hline
RE($g(\hat{\overline{h}}_{PEML})$, SRSWOR $|$ \tnote{2} \  BC $g(\hat{\overline{h}}_{PEML})$, SRSWOR)  &  $1.0208$& $1.01$& $1.0669$\\
 RE($g(\hat{\overline{h}}_{H})$, SRSWOR $|$ \tnote{2} \  BC $g(\hat{\overline{h}}_{H})$, SRSWOR)  &$ 38.642$&$ 50.009$&$ 65.398$\\
 RE($g(\hat{\overline{h}}_{H})$, RS $|$ \tnote{2} \  BC $g(\hat{\overline{h}}_{H})$, RS)  &  $ 1.0029$&$ 1.0117$&$ 1.074$\\ 
 RE($g(\hat{\overline{h}}_{PEML})$, RS $|$ \tnote{2} \  BC $g(\hat{\overline{h}}_{PEML})$, RS)  & $ 1.0112$&$ 1.023$&$ 1.0377
$ \\  
 RE($g(\hat{\overline{h}}_{PEML})$, RHC $|$ \tnote{2} \  BC $g(\hat{\overline{h}}_{PEML})$, RHC)  & $1.0141$&$ 1.015$&$ 1.0126$  \\ 
\hline
\end{tabular}
\begin{tablenotes}
\item[2] BC=Bias-corrected. 
\end{tablenotes}
\end{threeparttable}
\end{table}

\begin{table}[h]
\renewcommand{\arraystretch}{0.7}
\caption{Relative efficiencies of estimators for correlation coefficient between $z_1$ and $z_2$. Recall from Table $4$ in Section $2$ that for correlation coefficient between $z_1$ and $z_2$, $h(z_1,z_2)$=$(z_1,z_2,z_1^2,z_2^2, z_1 z_2)$ and $g(s_1,s_2,s_3,s_4,s_5)$=$(s_5-s_1s_2)/((s_3-s_1^2)(s_4-s_2^2))^{1/2}$.}
\label{table 3}
\begin{threeparttable}
\centering
\begin{tabular}{|c|c|c|c|}
\hline 
\backslashbox{Relative efficiency}{Sample size}
&  $n$=$75$ & $n$=$100$ & $n$=$125$  \\
\hline
 RE($g(\hat{\overline{h}}_{PEML})$, SRSWOR $|$ $g(\hat{\overline{h}}_{H})$, SRSWOR)& $1.0304$& $1.0274$& $1.0385$ \\
 RE($g(\hat{\overline{h}}_{PEML})$, SRSWOR $|$ $g(\hat{\overline{h}}_{H})$, RS)& $1.0307$& $1.0838$& $1.0515$ \\
 RE($g(\hat{\overline{h}}_{PEML})$, SRSWOR $|$ $g(\hat{\overline{h}}_{PEML})$, RS)&  $1.0573$& $1.1862$& $1.1081$ \\ 
 RE($g(\hat{\overline{h}}_{PEML})$, SRSWOR $|$ $g(\hat{\overline{h}}_{PEML})$, RHC)& $1.0847$& $1.1459$& $1.0911$ \\
 \hline
 RE($g(\hat{\overline{h}}_{PEML})$, SRSWOR $|$ \tnote{2} \  BC $g(\hat{\overline{h}}_{PEML})$, SRSWOR)  &  $89.989$& $95.299$& $123.89$\\
 RE($g(\hat{\overline{h}}_{H})$, SRSWOR $|$ \tnote{2} \  BC $g(\hat{\overline{h}}_{H})$, SRSWOR)  &$90.407$&$  96.79$&$ 141.989$\\
 RE($g(\hat{\overline{h}}_{H})$, RS $|$ \tnote{2} \  BC $g(\hat{\overline{h}}_{H})$, RS)  &  $90.037$&$ 102.914$&$ 152.993$\\ 
 RE($g(\hat{\overline{h}}_{PEML})$, RS $|$ \tnote{2} \  BC $g(\hat{\overline{h}}_{PEML})$, RS)  & $95.68$&$  98.758$&$ 158.832$ \\  
 RE($g(\hat{\overline{h}}_{PEML})$, RHC $|$ \tnote{2} \  BC $g(\hat{\overline{h}}_{PEML})$, RHC)  & $86.27$&$ 120.582$&$ 125.374$  \\
 \hline 
\end{tabular}
\end{threeparttable}
\end{table}

\begin{table}[h]
\renewcommand{\arraystretch}{0.7}
\caption{Relative efficiencies of estimators for regression coefficient of $z_1$ on $z_2$. Recall from Table $4$ in Section $2$ that for regression coefficient of $z_1$ on $z_2$, $h(z_1,z_2)$=$(z_1, z_2, z^2_2, z_1 z_2)$ and $g(s_1,s_2,s_3, s_4)$=$(s_4-s_1 s_2)/(s_3-s^2_2)$.}
\label{table 4}
\begin{threeparttable}
\centering
\begin{tabular}{|c|c|c|c|}
\hline 
\backslashbox{Relative efficiency}{Sample size}
&  $n$=$75$ & $n$=$100$ & $n$=$125$  \\
\hline
 RE($g(\hat{\overline{h}}_{PEML})$, SRSWOR $|$ $g(\hat{\overline{h}}_{H})$, SRSWOR)& $1.0389$& $1.0473$& $1.0218$ \\ 
 RE($g(\hat{\overline{h}}_{PEML})$, SRSWOR $|$ $g(\hat{\overline{h}}_{H})$, RS)& $1.0589$& $1.0829$& $1.0827$ \\
 RE($g(\hat{\overline{h}}_{PEML})$, SRSWOR $|$ $g(\hat{\overline{h}}_{PEML})$, RS)&  $1.1219$& $1.1334$& $1.2137$ \\ 
 RE($g(\hat{\overline{h}}_{PEML})$, SRSWOR $|$ $g(\hat{\overline{h}}_{PEML})$, RHC)& $1.2037$& $1.1307$& $1.1399$ \\
\hline
 RE($g(\hat{\overline{h}}_{PEML})$, SRSWOR $|$ \tnote{2} \  BC $g(\hat{\overline{h}}_{PEML})$, SRSWOR)  &  $80.64$& $91.707$& $124.476$\\ 
 RE($g(\hat{\overline{h}}_{H})$, SRSWOR $|$ \tnote{2} \  BC $g(\hat{\overline{h}}_{H})$, SRSWOR)  &$79.298$& $ 89.105$&$ 123.042$\\
 RE($g(\hat{\overline{h}}_{RS})$, RS $|$ \tnote{2} \  BC $g(\hat{\overline{h}}_{H})$, RS)  &  $85.97$&$  96.22$&$ 135.449$\\
 RE($g(\hat{\overline{h}}_{PEML})$, RS $|$ \tnote{2} \  BC $g(\hat{\overline{h}}_{PEML})$, RS)  & $ 83.331$&$  97.583$&$ 125.657$ \\  
 RE($g(\hat{\overline{h}}_{PEML})$, RHC $|$ \tnote{2} \  BC $g(\hat{\overline{h}}_{PEML})$, RHC)  & $75.343$&$ 112.619$&$ 115.594$  \\
 \hline
\end{tabular}
\end{threeparttable}
\end{table}

\begin{table}[h]
\renewcommand{\arraystretch}{0.7}
\caption{Relative efficiencies of estimators for regression coefficient of $z_2$ on $z_1$. Recall from Table $4$ in Section $2$ that for regression coefficient of $z_2$ on $z_1$, $h(z_1,z_2)$=$(z_2, z_1, z^2_1, z_1 z_2)$ and $g(s_1,s_2,s_3, s_4)$=$(s_4-s_1 s_2)/(s_3-s^2_2)$.}
\label{table 5}
\begin{threeparttable}
\centering
\begin{tabular}{|c|c|c|c|}
\hline 
\backslashbox{Relative efficiency}{Sample size}
&  $n$=$75$ & $n$=$100$ & $n$=$125$  \\
\hline
 RE($g(\hat{\overline{h}}_{PEML})$, SRSWOR $|$ $g(\hat{\overline{h}}_{H})$, SRSWOR)& $1.0498$& $1.04$& $1.0301$ \\ 
 RE($g(\hat{\overline{h}}_{PEML})$, SRSWOR $|$ $g(\hat{\overline{h}}_{H})$, RS)& $1.0655$& $1.0652$& $1.0548$ \\
 RE($g(\hat{\overline{h}}_{PEML})$, SRSWOR $|$ $g(\hat{\overline{h}}_{PEML})$, RS)&  $ 1.1073$& $1.1153$& $1.1135$ \\ 
 RE($g(\hat{\overline{h}}_{PEML})$, SRSWOR $|$ $g(\hat{\overline{h}}_{PEML})$, RHC)& $ 1.0762$& $1.0905$& $1.1108$ \\
 \hline
  RE($g(\hat{\overline{h}}_{PEML})$, SRSWOR $|$ \tnote{2} \  BC $g(\hat{\overline{h}}_{PEML})$, SRSWOR)  &  $72.061$& $105.389$& $111.124$\\ 
  RE($g(\hat{\overline{h}}_{H})$, SRSWOR $|$ \tnote{2} \  BC $g(\hat{\overline{h}}_{H})$, SRSWOR)  &$69.114$& $108.837$&$ 118.675$\\
 RE($g(\hat{\overline{h}}_{H})$, RS $|$ \tnote{2} \  BC $g(\hat{\overline{h}}_{H})$, RS)  &  $ 69.16$& $115.113$&$ 144.811$\\
 RE($g(\hat{\overline{h}}_{PEML})$, RS $|$ \tnote{2} \  BC $g(\hat{\overline{h}}_{PEML})$, RS)  & $72.448$& $127.387$& $131.558$
 \\  
 RE($g(\hat{\overline{h}}_{PEML})$, RHC $|$ \tnote{2} \  BC $g(\hat{\overline{h}}_{PEML})$, RHC)  & $90.132$& $104.121$&$ 148.139$  \\
 \hline
\end{tabular}
\end{threeparttable}
\end{table}

\begin{table}[h!]
\caption{Average and standard deviation of lengths of asymptotically $95\%$ CIs for mean of $y$.}
\label{table 6}
\begin{threeparttable}
\centering
\begin{tabular}{|c|c|c|c|} 
\hline
& \multicolumn{3}{c|}{Average length}\\
&\multicolumn{3}{c|}{(Standard deviation)}\\
 \hline
\backslashbox{Estimator and \\sampling  design \\based on which CI is constructed}{Sample size}
&  $n$=$75$ & $n$=$100$ & $n$=$125$  \\ 
 \hline
\multirow{2}{*}{ $\hat{\overline{Y}}_{H}$, SRSWOR}& $536.821$& $538.177$& $ 539.218$\\
&$(11.357)$& $(9.0784)$&$(6.8211)$\\
\multirow{2}{*}{\tnote{3}  \  $\hat{\overline{Y}}_{PEML}$, SRSWOR} & $44.824$& $38.81$ & $34.648$\\
&$(3.7002)$& $(2.7727)$&$(2.2055)$\\
\multirow{2}{*}{$\hat{\overline{Y}}_{HT}$, RS}& $689.123$& $597.999$& $535.951$\\
&$(7.8452)$& $(5.7176)$&$(4.8422)$\\
\multirow{2}{*}{$\hat{\overline{Y}}_{H}$, RS}& $102.611$& $87.915$& $59.98307$\\
&$(10.969)$& $(8.453)$&$(6.5828)$\\
\multirow{2}{*}{\tnote{3} \ $\hat{\overline{Y}}_{PEML}$, RS}& $345.956$& $115.944$& $78.711$\\
&$(654.77)$& $(265.93)$&$(1041.2)$\\
\multirow{2}{*}{$\hat{\overline{Y}}_{RHC}$, RHC} & $848.033$& $624.881$& $541.421$ \\
&$(6.8489)$& $(4.9609)$&$(4.0927)$\\
\multirow{2}{*}{\tnote{3} \ $\hat{\overline{Y}}_{PEML}$, RHC}& $64.573$& $56.531$& $50.601$\\
&$(715.16)$& $(275.11)$&$(651.31)$\\
\hline
\end{tabular}
\begin{tablenotes}
\item[3] It is to be noted that in the cases of PEML and GREG estimators under any given sampling design, we have the same asymptotic MSE and hence the same asymptotic CI. Therefore, the average and the standard deviation of CIs are not reported for the GREG estimator. 
\end{tablenotes}
\end{threeparttable}
\end{table}

\begin{table}[h!]
\caption{Average and standard deviation of lengths of asymptotically $95\%$ CIs for variance of $y$. Recall from Table $4$ in Section $2$ that for variance of $y$, $h(y_1)$=$(y^2,y)$ and $g(s_1,s_2)$=$s_1-s_2^2$.}
\label{table 7}
\centering
\begin{tabular}{|c|c|c|c|} 
\hline
& \multicolumn{3}{|c|}{Average length}\\
&\multicolumn{3}{c|}{(Standard deviation)}\\
 \hline
\backslashbox{Estimator and \\ sampling design \\based  on which CI\\ is constructed}{Sample size}
&  $n$=$75$ & $n$=$100$ & $n$=$125$  \\ 
 \hline
\multirow{2}{*}{$g(\hat{\overline{h}}_{H})$, SRSWOR}& $1010775$& $878689.4$& $786228$\\
&$(34245.5)$& $(26373.9)$&$(20414.5)$\\
\multirow{2}{*}{$g(\hat{\overline{h}}_{PEML})$, SRSWOR}& $29432.4$& $25929$& $23422$\\
&$(6076.97)$& $(4441.2)$&$(3526.8)$\\
\multirow{2}{*}{$g(\hat{\overline{h}}_{H})$, RS}& $444594.4$& $434160.7$& $239065$\\
&$(44701.7)$& $(31965.2)$&$(26739.6)$\\
\multirow{2}{*}{$g(\hat{\overline{h}}_{PEML})$, RS}& $1152403$& $1290084$& $235909.1$\\
&$(9083944)$& $(869339.1)$&$(1183961)$\\
\multirow{2}{*}{$g(\hat{\overline{h}}_{PEML})$, RHC}& $1031407$& $895639$& $801178.9$\\
&$(7311193)$& $(1530759)$&$(417582.9)$\\
\hline
\end{tabular}
\end{table}

\begin{table}[h!]
\caption{Average and standard deviation of lengths of asymptotically $95\%$ CIs for correlation coefficient between $z_1$ and $z_2$. Recall from Table $4$ in Section $2$ that for correlation coefficient between $z_1$ and $z_2$, $h(z_1,z_2)$=$(z_1,z_2,z_1^2,z_2^2,z_1 z_2)$ and $g(s_1,s_2,s_3,s_4,s_5)$=$(s_5-s_1s_2)/((s_3-s_1^2)(s_4-s_2^2))^{1/2}$.}
\label{table 8}
\centering
\begin{tabular}{|c|c|c|c|} 
\hline
& \multicolumn{3}{c|}{Average length}\\
&\multicolumn{3}{c|}{(Standard deviation)}\\
 \hline
\backslashbox{Estimator and \\sampling  design \\based on which CI is constructed}{Sample size}
&  $n$=$75$ & $n$=$100$ & $n$=$125$  \\ 
 \hline
\multirow{2}{*}{$g(\hat{\overline{h}}_{H})$, SRSWOR}& $8.2191$& $8.0909$& $8.0897$\\
&$(2.429)$& $(1.889)$&$(1.449)$\\
\multirow{2}{*}{$g(\hat{\overline{h}}_{PEML})$, SRSWOR}& $0.2542$& $0.2575$& $0.2583$ \\
&$(0.0467)$& $(0.0365)$&$(0.0294)$\\
\multirow{2}{*}{$g(\hat{\overline{h}}_{H})$, RS}& $4.6847$& $3.3135$& $1.3942$\\
&$(2.555)$& $(1.884)$&$(1.421)$\\
\multirow{2}{*}{$g(\hat{\overline{h}}_{PEML})$, RS}& $5.0473$& $4.3229$& $3.1306$\\
&$(162.9)$& $(17.19)$&$(21.04)$\\
\multirow{2}{*}{$g(\hat{\overline{h}}_{PEML})$, RHC}& $8.3174$& $8.3898$& $8.3514$\\
&$(15.82)$& $(41.88)$&$(19.62)$\\
\hline
\end{tabular}
\end{table}

\begin{table}[h!]
\caption{Average and standard deviation of lengths of asymptotically $95\%$ CIs for regression coefficient of $z_1$ on $z_2$. Recall from Table $4$ in Section $2$ that for regression coefficient of $z_1$ on $z_2$, $h(z_1,z_2)$=$(z_1, z_2, z^2_2, z_1 z_2)$ and $g(s_1,s_2,s_3, s_4)$=$(s_4-s_1 s_2)/(s_3-s^2_2)$.}
\label{table 9}
\centering
\begin{tabular}{|c|c|c|c|} 
\hline
& \multicolumn{3}{c|}{Average length}\\
&\multicolumn{3}{c|}{(Standard deviation)}\\
 \hline
\backslashbox{Estimator and \\sampling  design \\based on which CI is constructed}{Sample size}
&  $n$=$75$ & $n$=$100$ & $n$=$125$  \\ 
 \hline
\multirow{2}{*}{$g(\hat{\overline{h}}_{H})$, SRSWOR}& $5.9565$& $5.068$& $4.4818$\\
&$(2.013)$& $(1.514)$&$(1.135)$\\
\multirow{2}{*}{$g(\hat{\overline{h}}_{PEML})$, SRSWOR}& $0.2596$& $0.2251$& $0.2032$\\
&$(0.0429)$& $(0.0324)$&$(0.025)$\\
\multirow{2}{*}{$g(\hat{\overline{h}}_{H})$, RS}& $3.0488$& $1.469$& $1.1532$\\
&$(2.178)$& $(1.517)$&$(1.171)$\\
\multirow{2}{*}{$g(\hat{\overline{h}}_{PEML})$, RS}& $3.6477$& $1.8558$& $1.4023$\\
&$(19.09)$& $(4.697)$&$(4.672)$\\
\multirow{2}{*}{$g(\hat{\overline{h}}_{PEML})$, RHC}& $6.111$& $5.1324$& $4.6658$\\
&$(25.16)$& $(38.36)$&$(11.17)$\\
\hline
\end{tabular}
\end{table}

\begin{table}[h]
\caption{Average and standard deviation of lengths of asymptotically $95\%$ CIs for regression coefficient of $z_2$ on $z_1$. Recall from Table $4$ in Section $2$ that for regression coefficient of $z_2$ on $z_1$, $h(z_1,z_2)$=$(z_2, z_1, z^2_1, z_1 z_2)$ and $g(s_1,s_2,s_3, s_4)$=$(s_4-s_1 s_2)/(s_3-s^2_2)$.}
\label{table 10}
\centering
\begin{tabular}{|c|c|c|c|} 
\hline
& \multicolumn{3}{c|}{Average length}\\
&\multicolumn{3}{c|}{(Standard deviation)}\\
 \hline
\backslashbox{Estimator and \\sampling  design \\based on which CI is constructed}{Sample size}
&  $n$=$75$ & $n$=$100$ & $n$=$125$  \\ 
 \hline
\multirow{2}{*}{$g(\hat{\overline{h}}_{H})$, SRSWOR}& $11.2173$& $9.6463$& $8.5885$\\
&$(3.238)$& $(2.418)$&$(1.877)$\\
\multirow{2}{*}{$g(\hat{\overline{h}}_{PEML})$, SRSWOR}& $0.4198$& $0.3652$& $0.3307$\\
&$(0.0661)$& $(0.0531)$&$(0.0405)$\\
\multirow{2}{*}{$g(\hat{\overline{h}}_{H})$, RS}& $6.7247$&$3.3547$ &$1.7421$\\
&$(3.546)$& $(2.539)$&$(1.921)$\\
\multirow{2}{*}{$g(\hat{\overline{h}}_{PEML})$, RS}& $11.3373$& $9.988$& $8.7889$\\
&$(151.9)$& $(31.83)$&$(7.405)$\\
\multirow{2}{*}{$g(\hat{\overline{h}}_{PEML})$, RHC}& $19.9049$& $3.5595$&  $1.8327$\\
&$(28.77)$& $(321.7)$&$(8.164)$\\
\hline
\end{tabular}
\end{table}
\clearpage

\section{Analysis based on real data}\label{sec S4}
The results obtained from the analyses carried out in Section $4.2$ of the main paper and Section \ref{sec S2} in this supplement are summarized in these sections. Here, we provide some scatter plots and tables that were mentioned in these sections.  Figures \ref{Fig 1} through \ref{Fig 4} present scatter plots  and least square regression lines  between different study and size variables drawn based on all the population values. Tables \ref{table 11} through \ref{table 20} contain relative efficiencies of estimators for the mean, the variance, the correlation coefficient and the regression coefficient in the population.  Tables \ref{table 21} through \ref{table 30} contain the average and the standard deviation of lengths of asymptotically $95\%$ CIs of the above parameters.   
\begin{figure}[h!]
\begin{center}
\includegraphics[height=15cm,width=14cm]{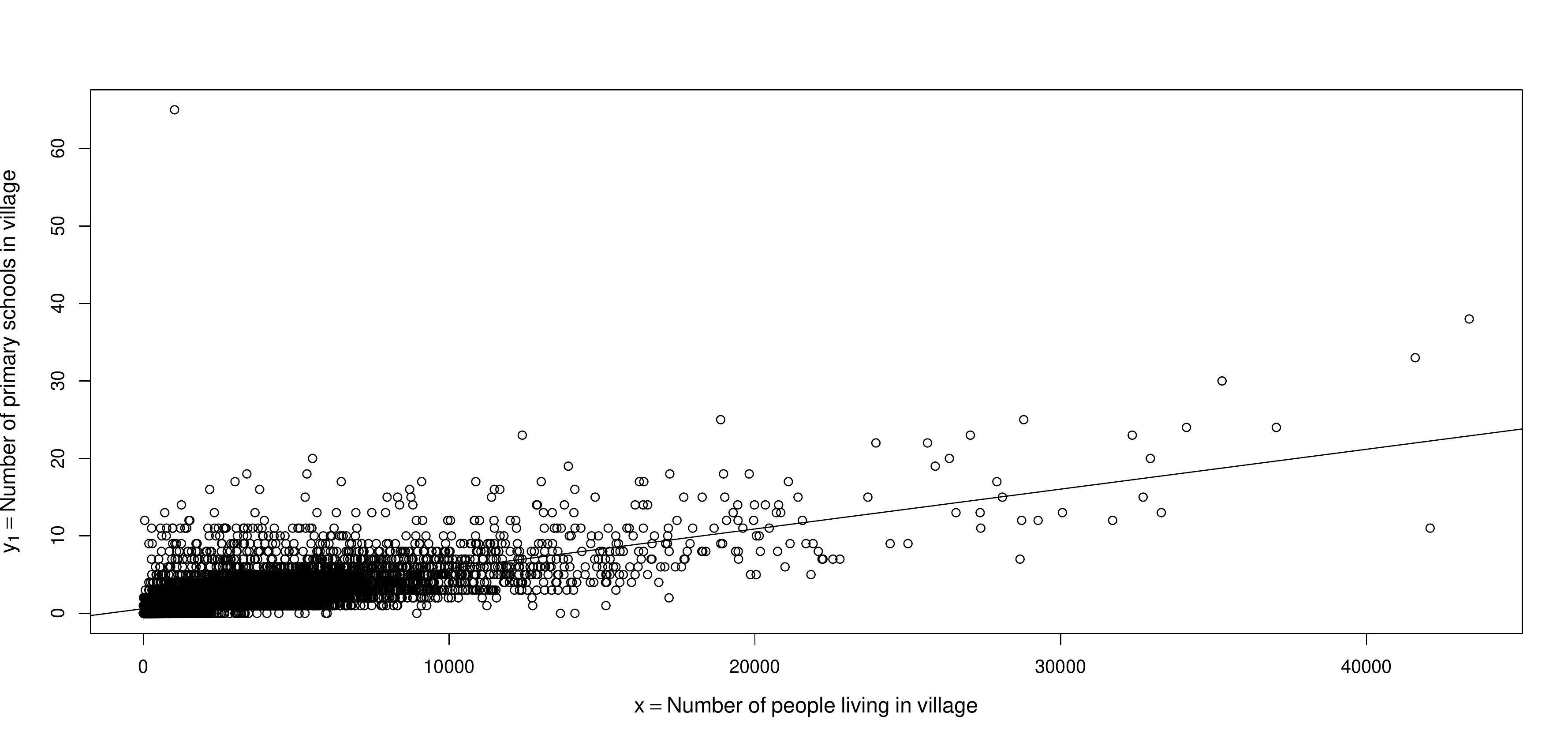}
\caption{Scatter plot  and least square regression line  for variables $y_1$ and $x$}
\label{Fig 1}
\end{center}
\end{figure}

\begin{figure}[h!]
\begin{center}
\includegraphics[height=16cm,width=14cm]{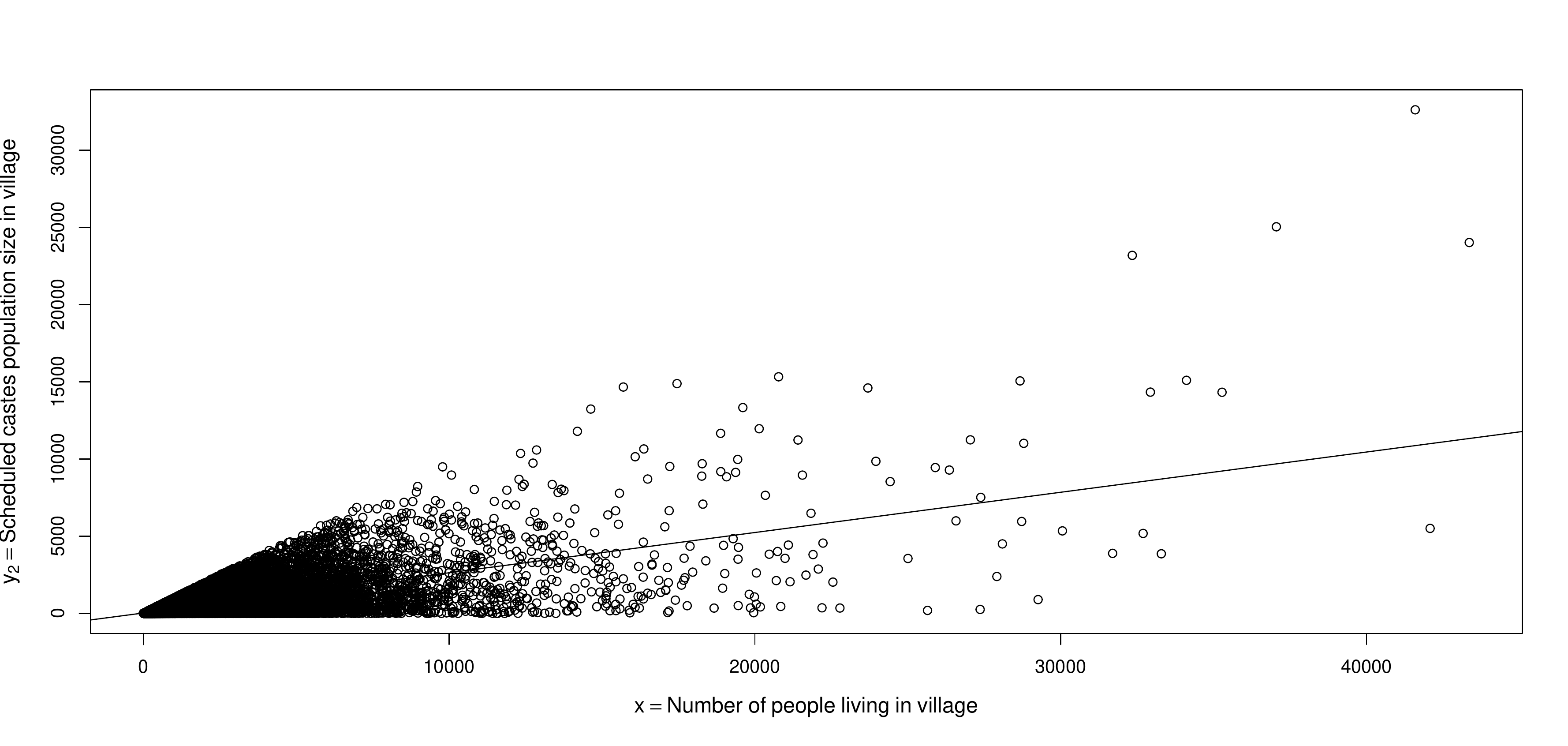}
\caption{Scatter plot  and least square regression line  for variables $y_2$ and $x$}
\label{Fig 2}
\end{center}
\end{figure}

\begin{figure}[h!]
\begin{center}
\includegraphics[height=16cm,width=14cm]{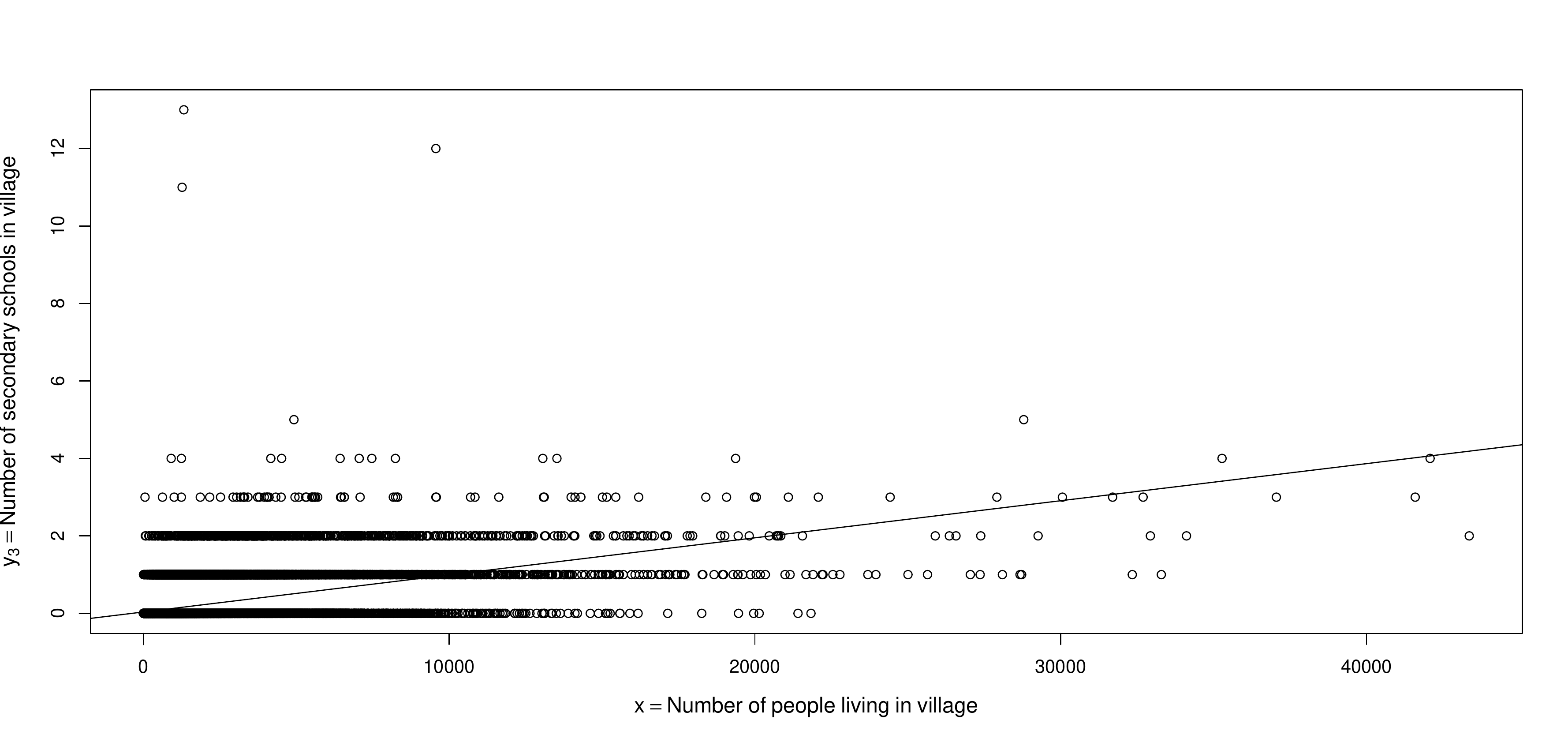}
\caption{Scatter plot  and least square regression line  for variables $y_3$ and $x$}
\label{Fig 3}
\end{center}
\end{figure}

\begin{figure}[h!]
\begin{center}
\includegraphics[height=16cm,width=14cm]{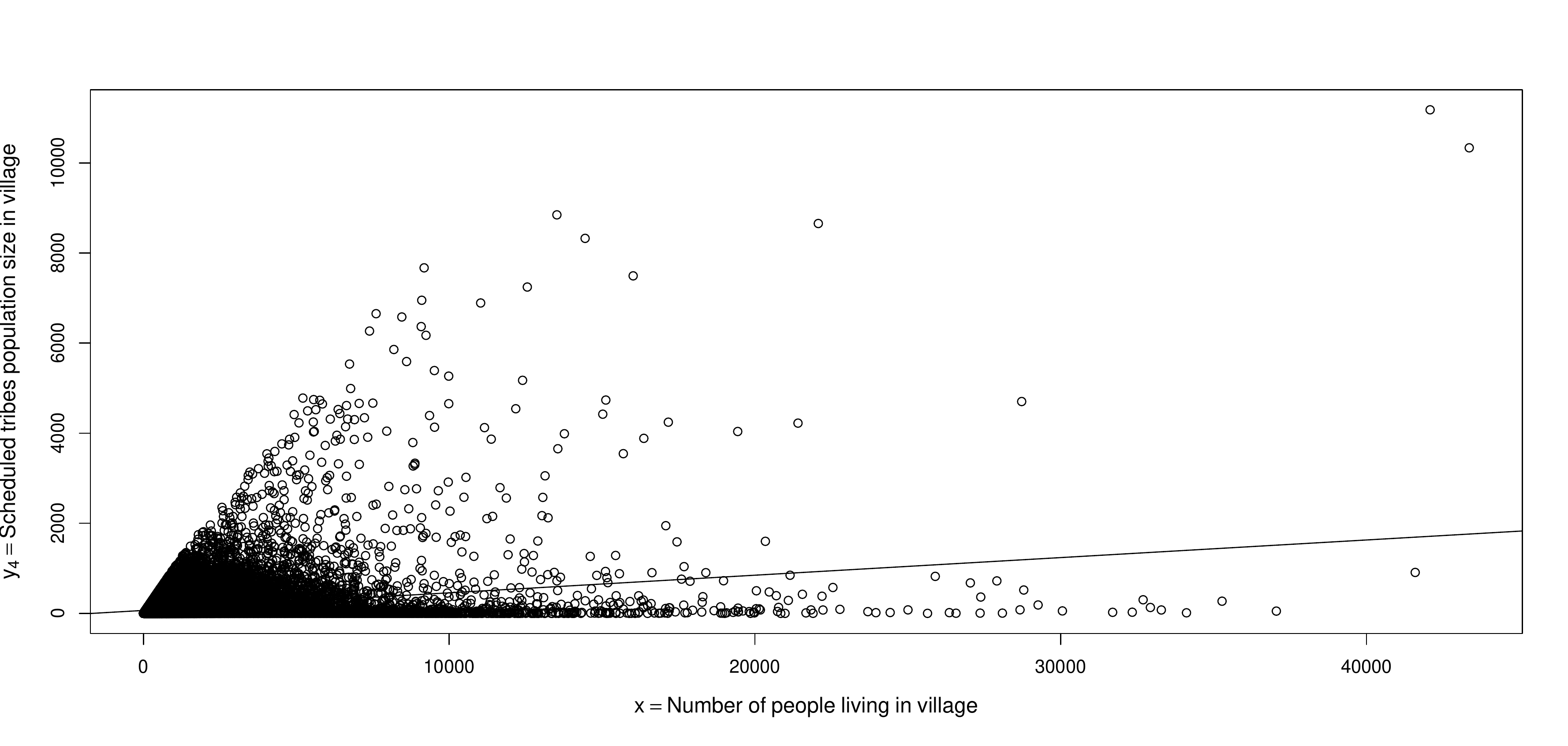}
\caption{Scatter plot  and least square regression line  for variables $y_4$ and $x$}
\label{Fig 4}
\end{center}
\end{figure}

\begin{table}[h!]
\caption{Relative efficiencies of estimators for mean of $y_1$.}
\renewcommand{\arraystretch}{1.2}
\label{table 11}
\begin{threeparttable}
\centering
\begin{tabular}{|c|c|c|c|} 
\hline
\backslashbox{Relative efficiency}{Sample size}
&  $n$=$75$ & $n$=$100$ & $n$=$125$  \\ 
 \hline
 RE($\hat{\overline{Y}}_{PEML}$, SRSWOR $|$ $\hat{\overline{Y}}_{GREG}$, SRSWOR)& $1.008215$& $1.005233$& $1.020408$\\
 RE($\hat{\overline{Y}}_{PEML}$, SRSWOR $|$ $\hat{\overline{Y}}_{H}$, RS)& $3.503939$& $3.880443$& $4.175886$\\
 RE($\hat{\overline{Y}}_{PEML}$, SRSWOR $|$ $\hat{\overline{Y}}_{HT}$, RS)& $1.796937$& $2.182675$& $1.8311$\\
 RE($\hat{\overline{Y}}_{PEML}$, SRSWOR $|$ $\hat{\overline{Y}}_{PEML}$, RS)&  $1.20961$& $1.228022$& $1.50233$\\
 RE($\hat{\overline{Y}}_{PEML}$, SRSWOR $|$ $\hat{\overline{Y}}_{GREG}$, RS)& $1.21831$& $1.237737$& $1.553863$\\
 RE($\hat{\overline{Y}}_{PEML}$, SRSWOR $|$ $\hat{\overline{Y}}_{RHC}$, RHC)&  $3.274031$& $2.059141$& $2.030995$\\
 RE($\hat{\overline{Y}}_{PEML}$, SRSWOR $|$ $\hat{\overline{Y}}_{PEML}$, RHC)&  $1.088166$& $1.388563$ & $1.51547$\\
 RE($\hat{\overline{Y}}_{PEML}$, SRSWOR $|$ $\hat{\overline{Y}}_{GREG}$, RHC)&$1.097934$& $1.398241$& $1.567545$\\
\hline
RE($\hat{\overline{Y}}_{PEML}$, SRSWOR $|$ \tnote{1} $\hat{\overline{Y}}_{BCPEML}$, SRSWOR)& $1.070226$& $1.019958$& $1.007533$\\
RE($\hat{\overline{Y}}_{GREG}$, SRSWOR $|$ \tnote{1} $\hat{\overline{Y}}_{BCGREG}$, SRSWOR)& $1.146007$&$ 1.116225$&$ 1.117507$\\
RE($\hat{\overline{Y}}_{H}$, RS $|$ \tnote{1} $\hat{\overline{Y}}_{BCH}$, RS)&$ 1.240493$&$ 1.012969$&$ 1.155246$\\
RE($\hat{\overline{Y}}_{PEML}$, RS $|$ \tnote{1} $\hat{\overline{Y}}_{BCPEML}$, RS)& $1.374578$&$ 1.046986$&$ 1.055930$ \\
RE($\hat{\overline{Y}}_{GREG}$, RS $|$ \tnote{1} $\hat{\overline{Y}}_{BCGREG}$, RS)&$1.466647$&$ 1.138300$&$ 1.205053$\\
RE($\hat{\overline{Y}}_{PEML}$, RHC $|$ \tnote{1} $\hat{\overline{Y}}_{BCPEML}$, RHC)&$1.566827$&$ 1.083589$&$ 1.132790$ \\
RE($\hat{\overline{Y}}_{GREG}$, RHC $|$ \tnote{1} $\hat{\overline{Y}}_{BCGREG}$, RHC)& $1.460886$&$ 1.037045$&$ 1.028358$\\
\hline
\end{tabular}
\end{threeparttable}

\footnotetext[1]{}
\end{table}

\begin{table}[h!]
\renewcommand{\arraystretch}{1.2}
\caption{Relative efficiencies of estimators for variance of $y_1$. Recall from Table $4$ in Section $2$ that for variance of $y_1$, $h(y_1)$=$(y_1^2,y_1)$ and $g(s_1,s_2)$=$s_1-s_2^2$.}
\label{table 12}
\begin{threeparttable}
\centering
\begin{tabular}{|c|c|c|c|} 
\hline
\backslashbox{Relative efficiency}{Sample size}
&  $n$=$75$ & $n$=$100$ & $n$=$125$  \\ 
 \hline 
  RE($g(\hat{\overline{h}}_{PEML})$, SRSWOR $|$ $g(\hat{\overline{h}}_{H})$, SRSWOR)  &  $1.3294$& $1.2413$& $1.1476$\\
 RE($g(\hat{\overline{h}}_{PEML})$, SRSWOR $|$ $g(\hat{\overline{h}}_{H})$, RS) & $2.5303$& $1.6656$& $1.5374$\\
 RE($g(\hat{\overline{h}}_{PEML})$, SRSWOR $|$ $g(\hat{\overline{h}}_{PEML})$, RS) &  $3.1642$& $2.4051$& $2.5831$\\
 RE($g(\hat{\overline{h}}_{PEML})$, SRSWOR $|$ $g(\hat{\overline{h}}_{PEML})$, RHC)  &  $ 2.5499$& $4.7704$& $3.0985$\\
\hline
 RE($g(\hat{\overline{h}}_{PEML})$, SRSWOR $|$ \tnote{2} \  BC $g(\hat{\overline{h}}_{PEML})$, SRSWOR)  &  $1.1812$& $1.2736$& $1.8669$\\ 
  RE($g(\hat{\overline{h}}_{H})$, SRSWOR $|$ \tnote{2} \  BC $g(\hat{\overline{h}}_{H})$, SRSWOR)  &$ 4.3526$&$ 4.8948$&$ 6.0349$\\
 RE($g(\hat{\overline{h}}_{H})$, RS $|$ \tnote{2} \  BC $g(\hat{\overline{h}}_{H})$, RS)  &  $1.115$&$ 1.1239$&$ 1.2269$\\ 
 RE($g(\hat{\overline{h}}_{PEML})$, RS $|$ \tnote{2} \  BC $g(\hat{\overline{h}}_{PEML})$, RS)  & $1.4373$& $1.1739$& $1.6481$ \\  
 RE($g(\hat{\overline{h}}_{PEML})$, RHC $|$ \tnote{2} \  BC $g(\hat{\overline{h}}_{PEML})$, RHC)  & $1.8502$&$ 1.0186$&$ 1.0384$  \\ 
\hline
\end{tabular}
\end{threeparttable}
\end{table}
\clearpage

\begin{table}[h!]
\caption{Relative efficiencies of estimators for mean of $y_2$.}
\renewcommand{\arraystretch}{1.2}
\label{table 13}
\begin{threeparttable}
\centering
\begin{tabular}{|c|c|c|c|} 
\hline
\backslashbox{Relative efficiency}{Sample size}
&  $n$=$75$ & $n$=$100$ & $n$=$125$  \\ 
 \hline
 RE($\hat{\overline{Y}}_{HT}$, RS $|$ $\hat{\overline{Y}}_{H}$, RS)& $4.367712$ &$4.008655$ & $4.463214$\\
 RE($\hat{\overline{Y}}_{HT}$, RS $|$ $\hat{\overline{Y}}_{PEML}$, RS)& $1.148074$  &$1.082488$ & $1.088804$\\
 RE($\hat{\overline{Y}}_{HT}$, RS $|$ $\hat{\overline{Y}}_{GREG}$, RS)& $1.216958$ &$1.115967$ & $1.154132$\\
 RE($\hat{\overline{Y}}_{HT}$, RS $|$ $\hat{\overline{Y}}_{RHC}$, RHC)&  $ 1.073138$& $1.03213$& $1.07484$\\
 RE($\hat{\overline{Y}}_{HT}$, RS $|$ $\hat{\overline{Y}}_{PEML}$, RHC)& $1.230884$ & $1.0937$& $1.207308$\\
 RE($\hat{\overline{Y}}_{HT}$, RS $|$ $\hat{\overline{Y}}_{GREG}$, RHC)& $1.304737$& $1.127526$& $1.279746$\\
 RE($\hat{\overline{Y}}_{HT}$, RS $|$ $\hat{\overline{Y}}_{PEML}$, SRSWOR)&   $2.440441$&$2.305339$ & $2.350916$\\
 RE($\hat{\overline{Y}}_{HT}$, RS $|$ $\hat{\overline{Y}}_{GREG}$, SRSWOR)& $2.58687$ &$2.376638$ &$2.49197$\\
\hline
RE($\hat{\overline{Y}}_{H}$, RS $|$ \tnote{1} $\hat{\overline{Y}}_{BCH}$, RS)&$1.252123$& $1.325047$&$ 1.241809$\\
RE($\hat{\overline{Y}}_{PEML}$, RS $|$ \tnote{1} $\hat{\overline{Y}}_{BCPEML}$, RS)& $1.988105$&$ 2.146357$&$ 2.260343$\\
RE($\hat{\overline{Y}}_{GREG}$, RS $|$ \tnote{1} $\hat{\overline{Y}}_{BCGREG}$, RS)& $2.055588$&$ 2.018015$&$ 2.287817$\\
RE($\hat{\overline{Y}}_{PEML}$, RHC $|$ \tnote{1} $\hat{\overline{Y}}_{BCPEML}$, RHC)&$ 1.831377$&$ 2.083210$&$ 2.006134$ \\
RE($\hat{\overline{Y}}_{GREG}$, RHC $|$ \tnote{1} $\hat{\overline{Y}}_{BCGREG}$, RHC)& $1.925938$&$ 1.983984$&$ 2.091003$\\
RE($\hat{\overline{Y}}_{PEML}$, SRSWOR $|$ \tnote{1} $\hat{\overline{Y}}_{BCPEML}$, SRSWOR)& $1.001786$&$ 1.004973$&$ 1.060588$\\
RE($\hat{\overline{Y}}_{GREG}$, SRSWOR $|$ \tnote{1} $\hat{\overline{Y}}_{BCGREG}$, SRSWOR)&$1.021103$&$ 1.008525$&$ 1.003390$\\
\hline
\end{tabular}
\end{threeparttable}
\end{table}

\begin{table}[h!]
\renewcommand{\arraystretch}{0.75}
\caption{Relative efficiencies of estimators for variance of $y_2$. Recall from Table $4$ in Section $2$ that for variance of $y_2$, $h(y_2)$=$(y_2^2,y_2)$ and $g(s_1,s_2)$=$s_1-s_2^2$.}
\label{table 14}
\begin{threeparttable}
\centering
\begin{tabular}{|c|c|c|c|} 
\hline
\backslashbox{Relative efficiency}{Sample size}
&  $n$=$75$ & $n$=$100$ & $n$=$125$  \\ 
 \hline 
 RE($g(\hat{\overline{h}}_{H})$, RS $|$ $g(\hat{\overline{h}}_{PEML})$, RS) &  $11.893$& $6.967$& $34.691$\\
 RE($g(\hat{\overline{h}}_{H})$, RS $|$ $g(\hat{\overline{h}}_{PEML})$, RHC)  &  $5.0093$& $19.456$& $21.919$\\
 RE($g(\hat{\overline{h}}_{H})$, RS $|$ $g(\hat{\overline{h}}_{H})$, SRSWOR)  &  $ 9.8232$& $10.27$& $16.763$\\
 RE($g(\hat{\overline{h}}_{H})$, RS $|$ $g(\hat{\overline{h}}_{PEML})$, SRSWOR) & $ 2.4768$ & $4.8093$ &$6.2264$\\
\hline
 RE($g(\hat{\overline{h}}_{H})$, RS $|$ \tnote{2} \  BC $g(\hat{\overline{h}}_{H})$, RS)  &  $13.301$& $6.3589$& $33.579$\\
  RE($g(\hat{\overline{h}}_{PEML})$, RS $|$ \tnote{2} \  BC $g(\hat{\overline{h}}_{PEML})$, RS)  & $4.448$&$ 7.4621$&$ 7.989$ \\  
 RE($g(\hat{\overline{h}}_{PEML})$, RHC $|$ \tnote{2} \  BC $g(\hat{\overline{h}}_{PEML})$, RHC)  & $21.855$&$  3.0076$&$ 11.368$  \\ 
  RE($g(\hat{\overline{h}}_{H})$, SRSWOR $|$ \tnote{2} \  BC $g(\hat{\overline{h}}_{H})$, SRSWOR)  &$8.7641$&$  5.6119$&$ 13.7$\\
 RE($g(\hat{\overline{h}}_{PEML})$, SRSWOR $|$ \tnote{2} \  BC $g(\hat{\overline{h}}_{PEML})$, SRSWOR)  &  $6.2655$&$ 2.0015$&$ 6.959$\\ 

\hline
\end{tabular}
\end{threeparttable}
\end{table}

\begin{table}[h!]
\renewcommand{\arraystretch}{0.75}
\caption{Relative efficiencies of estimators for correlation coefficient between $y_1$ and $y_3$. Recall from Table $4$ in Section $2$ that for correlation coefficient between $y_1$ and $y_3$, $h(y_1,y_3)$=$(y_1,y_3,y_1^2,y_3^2,y_1 y_3)$ and $g(s_1,s_2,s_3,s_4,s_5)$=$(s_5-s_1s_2)/((s_3-s_1^2)(s_4-s_2^2))^{1/2}$.}
\label{table 15}
\begin{threeparttable}
\centering
\begin{tabular}{|c|c|c|c|}
\hline 
\backslashbox{Relative efficiency}{Sample size}
&  $n$=$75$ & $n$=$100$ & $n$=$125$  \\
\hline
 RE($g(\hat{\overline{h}}_{PEML})$, SRSWOR $|$ $g(\hat{\overline{h}}_{H})$, SRSWOR)& $1.0967$& $1.0369$& $1.0374$ \\ 
 RE($g(\hat{\overline{h}}_{PEML})$, SRSWOR $|$ $g(\hat{\overline{h}}_{H})$, RS)& $1.317$& $1.4831$& $1.2561$ \\
 RE($g(\hat{\overline{h}}_{PEML})$, SRSWOR $|$ $g(\hat{\overline{h}}_{PEML})$, RS)&  $1.9803$& $1.9874$& $1.8441$ \\ 
 RE($g(\hat{\overline{h}}_{PEML})$, SRSWOR $|$ $g(\hat{\overline{h}}_{PEML})$, RHC)& $2.0562$& $1.9651$& $1.8541$ \\
\hline
 RE($g(\hat{\overline{h}}_{PEML})$, SRSWOR $|$ \tnote{2} \  BC $g(\hat{\overline{h}}_{PEML})$, SRSWOR)  &  $23.149$& $51.887$& $45.976$\\
  RE($g(\hat{\overline{h}}_{H})$, SRSWOR $|$ \tnote{2} \  BC $g(\hat{\overline{h}}_{H})$, SRSWOR)  &$90.769$&$ 163.74$&$ 154.97$\\
 RE($g(\hat{\overline{h}}_{H})$, RS $|$ \tnote{2} \  BC $g(\hat{\overline{h}}_{H})$, RS)  &  $ 72.604$&$  79.355$&$ 163.03$\\ 
 RE($g(\hat{\overline{h}}_{PEML})$, RS $|$ \tnote{2} \  BC $g(\hat{\overline{h}}_{PEML})$, RS)  & $24.483$&$ 35.874$&$ 43.164$ \\  
 RE($g(\hat{\overline{h}}_{PEML})$, RHC $|$ \tnote{2} \  BC $g(\hat{\overline{h}}_{PEML})$, RHC)  & $29.189$&$ 65.949$&$ 43.13$\\
 \hline
\end{tabular}
\end{threeparttable}
\end{table}
\clearpage

\begin{table}[h!]
\renewcommand{\arraystretch}{0.6}
\caption{Relative efficiencies of estimators for regression coefficient of $y_1$ on $y_3$. Recall from Table $4$ in Section $2$ that for regression coefficient of $y_1$ on $y_3$, $h(y_1,y_3)$=$(y_1, y_3, y^2_3, y_1 y_3)$ and $g(s_1,s_2,s_3, s_4)$=$(s_4-s_1 s_2)/(s_3-s^2_2)$.}
\label{table 16}
\begin{threeparttable}
\centering
\begin{tabular}{|c|c|c|c|}
\hline 
\backslashbox{Relative efficiency}{Sample size}
&  $n$=$75$ & $n$=$100$ & $n$=$125$  \\
\hline
 RE($g(\hat{\overline{h}}_{PEML})$, SRSWOR $|$ $g(\hat{\overline{h}}_{H})$, SRSWOR)& $1.0298$& $1.0504$& $1.0423$ \\
 RE($g(\hat{\overline{h}}_{PEML})$, SRSWOR $|$ $g(\hat{\overline{h}}_{H})$, RS)& $ 1.8046$& $1.2304$& $1.3482$ \\
 RE($g(\hat{\overline{h}}_{PEML})$, SRSWOR $|$ $g(\hat{\overline{h}}_{PEML})$, RS)&  $2.2709$& $1.5949$& $1.854$ \\ 
 RE($g(\hat{\overline{h}}_{PEML})$, SRSWOR $|$ $g(\hat{\overline{h}}_{PEML})$, RHC)& $1.8719$& $1.5069$& $1.5626$ \\
\hline
 RE($g(\hat{\overline{h}}_{PEML})$, SRSWOR $|$ \tnote{2} \  BC $g(\hat{\overline{h}}_{PEML})$, SRSWOR)  &  $31.789$& $50.26$& $50.107$\\
RE($g(\hat{\overline{h}}_{H})$, SRSWOR $|$ \tnote{2} \  BC $g(\hat{\overline{h}}_{H})$, SRSWOR)  &$236.49$&$ 119.88$&$ 222.23$\\
 RE($g(\hat{\overline{h}}_{H})$, RS $|$ \tnote{2} \  BC $g(\hat{\overline{h}}_{H})$, RS)  &  $63.933$&$  77.049$&$ 184.45$\\ 
 RE($g(\hat{\overline{h}}_{PEML})$, RS $|$ \tnote{2} \  BC $g(\hat{\overline{h}}_{PEML})$, RS)  & $ 31.503$&$  44.945$&$ 263.5$ \\  
 RE($g(\hat{\overline{h}}_{PEML})$, RHC $|$ \tnote{2} \  BC $g(\hat{\overline{h}}_{PEML})$, RHC)  & $65.145$&$ 76.533$&$ 90.413$\\ 
 \hline
\end{tabular}
\end{threeparttable}
\end{table}

\begin{table}[h!]
\renewcommand{\arraystretch}{0.5}
\caption{Relative efficiencies of estimators for regression coefficient of $y_3$ on $y_1$. Recall from Table $4$ in Section $2$ that for regression coefficient of $y_3$ on $y_1$, $h(y_1,y_3)$=$(y_3, y_1, y^2_1, y_1 y_3)$ and $g(s_1,s_2,s_3, s_4)$=$(s_4-s_1 s_2)/(s_3-s^2_2)$.}
\label{table 17}
\centering
\begin{threeparttable}
\begin{tabular}{|c|c|c|c|}
\hline 
\backslashbox{Relative efficiency}{Sample size}
&  $n$=$75$ & $n$=$100$ & $n$=$125$  \\
\hline
  RE($g(\hat{\overline{h}}_{PEML})$, SRSWOR $|$ $g(\hat{\overline{h}}_{H})$, SRSWOR)& $1.0997$& $1.2329$& $1.1529$ \\
 RE($g(\hat{\overline{h}}_{PEML})$, SRSWOR $|$ $g(\hat{\overline{h}}_{H})$, RS)&  $1.3948$& $1.3329$& $1.368$ \\
 RE($g(\hat{\overline{h}}_{PEML})$, SRSWOR $|$ $g(\hat{\overline{h}}_{PEML})$, RS)&  $3.6069$& $1.5532$& $1.8035$ \\ 
 RE($g(\hat{\overline{h}}_{PEML})$, SRSWOR $|$ $g(\hat{\overline{h}}_{PEML})$, RHC)& $2.5567$& $1.4867$& $1.5335$ \\
\hline
 RE($g(\hat{\overline{h}}_{PEML})$, SRSWOR $|$ \tnote{2} \  BC $g(\hat{\overline{h}}_{PEML})$, SRSWOR)  & $26.09$&$29.557$& $32.345$\\
 RE($g(\hat{\overline{h}}_{H})$, SRSWOR $|$ \tnote{2} \  BC $g(\hat{\overline{h}}_{H})$, SRSWOR)  &$ 98.43$&$ 104.19$&$ 165.95$\\
 RE($g(\hat{\overline{h}}_{H})$, RS $|$ \tnote{2} \  BC $g(\hat{\overline{h}}_{H})$, RS)  &  $100.3$&$ 110.15$&$ 196.34$ \\ 
 RE($g(\hat{\overline{h}}_{PEML})$, RS $|$ \tnote{2} \  BC $g(\hat{\overline{h}}_{PEML})$, RS)  & $ 11.416$&$ 71.664$&$ 23.433$ \\  
 RE($g(\hat{\overline{h}}_{PEML})$, RHC $|$ \tnote{2} \  BC $g(\hat{\overline{h}}_{PEML})$, RHC)  & $13.268$&$ 28.198$&$ 50.571$\\
 \hline
\end{tabular}
\end{threeparttable}
\end{table}
\clearpage

\begin{table}[h!]
\renewcommand{\arraystretch}{0.58}
\caption{Relative efficiencies of estimators for correlation coefficient between $y_2$ and $y_4$. Recall from Table $4$ in Section $2$ that for correlation coefficient between $y_2$ and $y_4$, $h(y_2,y_4)$=$(y_2,y_4,y_2^2,y_4^2, y_2 y_4)$ and $g(s_1,s_2,s_3,s_4,s_5)$=$(s_5-s_1s_2)/((s_3-s_1^2)(s_4-s_2^2))^{1/2}$.}
\label{table 18}
\begin{threeparttable}
\centering
\begin{tabular}{|c|c|c|c|}
\hline 
\backslashbox{Relative efficiency}{Sample size}
&  $n$=$75$ & $n$=$100$ & $n$=$125$  \\
\hline
 RE($g(\hat{\overline{h}}_{H})$, RS $|$ $g(\hat{\overline{h}}_{PEML})$, RS)&  $1.448$& $1.696$& $2.027$ \\ 
 RE($g(\hat{\overline{h}}_{H})$, RS $|$ $g(\hat{\overline{h}}_{PEML})$, RHC)& $1.491$& $2.135$& $2.27$ \\
 RE($g(\hat{\overline{h}}_{H})$, RS $|$ $g(\hat{\overline{h}}_{H})$, SRSWOR)& $2.39$& $2.521$& $2.849$ \\
 RE($g(\hat{\overline{h}}_{H})$, RS $|$ $g(\hat{\overline{h}}_{PEML})$, SRSWOR)& $2.185$ & $2.396$& $2.594$\\
\hline
 RE($g(\hat{\overline{h}}_{H})$, RS $|$ \tnote{2} \  BC $g(\hat{\overline{h}}_{PEML})$, RS)  &  $79.092$& $58.241$& $120.229$\\
  RE($g(\hat{\overline{h}}_{PEML})$, RS $|$ \tnote{2} \  BC $g(\hat{\overline{h}}_{PEML})$, RS)  & $82.309$&$  61.995$&$ 316.929$ \\  
 RE($g(\hat{\overline{h}}_{PEML})$, RHC $|$ \tnote{2} \  BC $g(\hat{\overline{h}}_{PEML})$, RHC)  & $1 75.22$&$  74.847$&$ 220.74$\\ 
RE($g(\hat{\overline{h}}_{H})$, SRSWOR $|$ \tnote{2} \  BC $g(\hat{\overline{h}}_{H})$, SRSWOR)  &$87.942$&$ 36.363$&$ 97.432$\\ 
 RE($g(\hat{\overline{h}}_{PEML})$, SRSWOR $|$ \tnote{2} \  BC $g(\hat{\overline{h}}_{PEML})$, SRSWOR)  &  $ 120.02$&$  51.959$&$ 121.42$\\
\hline
\end{tabular}
\end{threeparttable}
\end{table}

\begin{table}[h!]
\renewcommand{\arraystretch}{0.6}
\caption{Relative efficiencies of estimators for regression coefficient of $y_2$ on $y_4$. Recall from Table $4$ in Section $2$ that for regression coefficient of $y_2$ on $y_4$, $h(y_2,y_4)$=$(y_2, y_4, y^2_4, y_2 y_4)$ and $g(s_1,s_2,s_3, s_4)$=$(s_4-s_1 s_2)/(s_3-s^2_2)$.}
\label{table 19}
\centering
\begin{threeparttable}
\begin{tabular}{|c|c|c|c|}
\hline 
\backslashbox{Relative efficiency}{Sample size}
&  $n$=$75$ & $n$=$100$ & $n$=$125$  \\
\hline
 RE($g(\hat{\overline{h}}_{H})$, RS $|$ $g(\hat{\overline{h}}_{PEML})$, RS)&  $1.8158$& $2.3771$& $3.2021$ \\ 
 RE($g(\hat{\overline{h}}_{H})$, RS $|$ $g(\hat{\overline{h}}_{PEML})$, RHC)& $2.5985$& $2.6002$& $3.4744$ \\
 RE($g(\hat{\overline{h}}_{H})$, RS $|$ $g(\hat{\overline{h}}_{H})$, SRSWOR)& $3.3278$& $4.5041$& $6.312$ \\
 RE($g(\hat{\overline{h}}_{H})$, RS $|$ $g(\hat{\overline{h}}_{PEML})$, SRSWOR)& $2.9788$ & $3.9417$& $6.0391$\\
\hline
 RE($g(\hat{\overline{h}}_{H})$, RS $|$ \tnote{2} \  BC $g(\hat{\overline{h}}_{H})$, RS)  &  $125.17$& $256.45$& $260.15$\\
  RE($g(\hat{\overline{h}}_{PEML})$, RS $|$ \tnote{2} \  BC $g(\hat{\overline{h}}_{PEML})$, RS)  & $145.1$&$ 333.5$&$ 135.65$ \\  
 RE($g(\hat{\overline{h}}_{PEML})$, RHC $|$ \tnote{2} \  BC $g(\hat{\overline{h}}_{PEML})$, RHC)  & $86.93$&$ 238.32$&$ 292.89$\\ 
  RE($g(\hat{\overline{h}}_{PEML})$, SRSWOR $|$ \tnote{2} \  BC $g(\hat{\overline{h}}_{PEML})$, SRSWOR)  &  $93.707$&$ 101.93$&$ 121.44$\\ 
  RE($g(\hat{\overline{h}}_{H})$, SRSWOR $|$ \tnote{2} \  BC $g(\hat{\overline{h}}_{H})$, SRSWOR)  &$115.85$&$ 146.16$& $104.66$\\

 \hline
\end{tabular}
\end{threeparttable}
\end{table}

\begin{table}[h!]
\caption{Relative efficiencies of estimators for regression coefficient of $y_4$ on $y_2$. Recall from Table $4$ in Section $2$ that for regression coefficient of $y_4$ on $y_2$, $h(y_2,y_4)$=$(y_4, y_2, y^2_2, y_2 y_4)$ and $g(s_1,s_2,s_3, s_4)$=$(s_4-s_1 s_2)/(s_3-s^2_2)$.}
\label{table 20}
\centering
\begin{threeparttable}[b]
\begin{tabular}{|c|c|c|c|}
\hline 
\backslashbox{Relative efficiency}{Sample size}
&  $n$=$75$ & $n$=$100$ & $n$=$125$  \\
\hline
 RE($g(\hat{\overline{h}}_{H})$, RS $|$ $g(\hat{\overline{h}}_{PEML})$, RS)& $1.3146$& $1.6055$& $1.937$  \\ 
 RE($g(\hat{\overline{h}}_{H})$, RS $|$ $g(\hat{\overline{h}}_{PEML})$, RHC)& $1.652$& $2.7715$& $2.0362$ \\
 RE($g(\hat{\overline{h}}_{H})$, RS $|$ $g(\hat{\overline{h}}_{H})$, SRSWOR)& $3.8248$& $2.4388$& $3.4371$ \\
 RE($g(\hat{\overline{h}}_{H})$, RS $|$ $g(\hat{\overline{h}}_{PEML})$, SRSWOR)& $3.1843$ & $2.3399$& $3.038$\\
\hline
 RE($g(\hat{\overline{h}}_{H})$, RS $|$ \tnote{2} \  BC $g(\hat{\overline{h}}_{H})$, RS)  &  $47.3317$& $73.749$& $52.592$ \\
  RE($g(\hat{\overline{h}}_{H})$, RS $|$ \tnote{2} \  BC $g(\hat{\overline{h}}_{PEML})$, RS)  & $105.87$& $126.42$& $323.82$ \\  
 RE($g(\hat{\overline{h}}_{H})$, RS $|$ \tnote{2} \  BC $g(\hat{\overline{h}}_{PEML})$, RHC)  &$93.403$& $79.453$& $91.347$\\ 
  RE($g(\hat{\overline{h}}_{H})$, RS $|$ \tnote{2} \  BC $g(\hat{\overline{h}}_{PEML})$, SRSWOR)  &  $530.94$& $173.19$& $191.26$\\
 RE($g(\hat{\overline{h}}_{H})$, RS $|$ \tnote{2} \  BC $g(\hat{\overline{h}}_{H})$, SRSWOR)  &$394.29$& $156.27$& $164.7$\\ 
 \hline
\end{tabular}
\end{threeparttable}
\end{table}

\begin{table}[h!]
\caption{Average and standard deviation of lengths of asymptotically $95\%$ CIs for mean of $y_1$.}
\label{table 21}
\centering
\begin{threeparttable}
\begin{tabular}{|c|c|c|c|} 
\hline
& \multicolumn{3}{c|}{Average length}\\
&\multicolumn{3}{c|}{(Standard deviation)}\\
 \hline
\backslashbox{Estimator and \\sampling  design \\based on which CI is constructed}{Sample size}
&  $n$=$75$ & $n$=$100$ & $n$=$125$  \\ 
 \hline
\multirow{2}{*}{ $\hat{\overline{Y}}_{H}$, SRSWOR}& $0.7233$& $0.7303$& $0.7333$\\
&$(0.2304)$ & $(0.1885)$&$(0.1431)$\\
\multirow{2}{*}{\tnote{3}  \  $\hat{\overline{Y}}_{PEML}$, SRSWOR} & $0.3703$& $0.3734$ & $0.3847$\\
&$(0.1608)$&$(0.1534)$&$(0.1074)$\\
\multirow{2}{*}{$\hat{\overline{Y}}_{HT}$, RS}& $0.7738$& $0.7735$& $0.8271$\\
&$(0.2724)$&$(1.071)$&$(0.2001)$\\
\multirow{2}{*}{$\hat{\overline{Y}}_{H}$, RS}& $0.4345$& $0.455$& $0.5414$\\
&$(0.8312)$&$(8.807)$&$(0.5479)$\\
\multirow{2}{*}{\tnote{3} \ $\hat{\overline{Y}}_{PEML}$, RS}& $0.6784$& $0.7207$& $0.7896$\\
&$(0.3945)$&$(12.176)$&$(0.2694)$\\
\multirow{2}{*}{$\hat{\overline{Y}}_{RHC}$, RHC} &  $0.7415$& $0.7716$& $0.8014$ \\
&$(0.4007)$&$(0.6359)$&$(0.2931)$\\
\multirow{2}{*}{\tnote{3} \ $\hat{\overline{Y}}_{PEML}$, RHC}& $0.4911$& $0.5078$& $0.5289$\\
&$(0.9865)$&$(0.4992)$&$(0.3594)$\\
\hline
\end{tabular}
\end{threeparttable}
\end{table}

\begin{table}[h!]
\caption{Average and standard deviation of lengths of asymptotically $95\%$ CIs for variance of $y_1$. Recall from Table $4$ in Section $2$ that for variance of $y_1$, $h(y_1)$=$(y_1^2,y_1)$ and $g(s_1,s_2)$=$s_1-s_2^2$.}
\label{table 22}
\centering
\begin{tabular}{|c|c|c|c|} 
\hline
& \multicolumn{3}{c|}{Average length}\\
&\multicolumn{3}{c|}{(Standard deviation)}\\
 \hline
\backslashbox{Estimator and \\sampling  design \\based on which CI is constructed}{Sample size}
&  $n$=$75$ & $n$=$100$ & $n$=$125$  \\ 
 \hline
\multirow{2}{*}{$g(\hat{\overline{h}}_{H})$, SRSWOR}& $5.2879$& $4.2111$& $4.4304$\\
&$(8.762)$&$(9.309)$&$(6.856)$\\
\multirow{2}{*}{$g(\hat{\overline{h}}_{PEML})$, SRSWOR}& $2.7519$& $2.9935$& $3.0013$\\
&$(7.181)$&$(8.622)$&$(5.952)$\\
\multirow{2}{*}{$g(\hat{\overline{h}}_{H})$, RS}& $3.5121$& $3.1177$& $3.1095$\\
&$(1.345)$&$(11.37)$&$(10.88)$\\
\multirow{2}{*}{$g(\hat{\overline{h}}_{PEML})$, RS}&$3.7475$ & $3.939$& $3.792$\\
&$(4.041)$&$(16.14)$&$(11.08)$\\
\multirow{2}{*}{$g(\hat{\overline{h}}_{PEML})$, RHC}& $3.6365$& $3.4972$& $3.4158$\\
&$(14.99)$&$(8.278)$&$(10.95)$\\
\hline
\end{tabular}
\end{table}

\begin{table}[h!]
\caption{Average and standard deviation of lengths of asymptotically $95\%$ CIs for mean of $y_2$.}
\label{table 23}
\centering
\begin{threeparttable}
\begin{tabular}{|c|c|c|c|} 
\hline
& \multicolumn{3}{c|}{Average length}\\
&\multicolumn{3}{c|}{(Standard deviation)}\\
 \hline
\backslashbox{Estimator and \\sampling  design \\based on which CI is constructed}{Sample size}
&  $n$=$75$ & $n$=$100$ & $n$=$125$  \\ 
 \hline
\multirow{2}{*}{$\hat{\overline{Y}}_{H}$, SRSWOR}& $312.1$& $322.48$& $326.36$\\
&$(150.08)$&$(121.86)$&$(93.707)$\\
\multirow{2}{*}{\tnote{3}  \  $\hat{\overline{Y}}_{PEML}$, SRSWOR} & $243.23$& $216.42$& $198.11$\\
&$(65.059)$&$(55.256)$&$(44.972)$\\
\multirow{2}{*}{$\hat{\overline{Y}}_{HT}$, RS}& $184.98$& $160.79$& $144.43$\\
&$(24.336)$&$(17.942)$&$(13.89)$\\
\multirow{2}{*}{$\hat{\overline{Y}}_{H}$, RS}& $189.49$& $163.19$& $145.82$\\
&$(314.18)$&$(209.6)$&$(164.32)$\\
\multirow{2}{*}{\tnote{3} \ $\hat{\overline{Y}}_{PEML}$, RS}& $343.6$& $300.14$& $272.63$\\
&$(60.804)$&$(20.411)$&$(21.998)$\\
\multirow{2}{*}{$\hat{\overline{Y}}_{RHC}$, RHC} &  $277.91$& $240.09$& $214.78$\\
&$(16.039)$&$(12.042)$&$(9.2784)$\\
\multirow{2}{*}{\tnote{3} \ $\hat{\overline{Y}}_{PEML}$, RHC}& $279.97$& $242.43$& $217.09$\\
&$(52.788)$&$(58.394)$&$(21.356)$\\
\hline
\end{tabular}
\end{threeparttable}
\end{table}

\begin{table}[h!]
\caption{Average and standard deviation of lengths of asymptotically $95\%$ CIs for variance of $y_2$. Recall from Table $4$ in Section $2$ that for variance of $y_2$, $h(y_2)$=$(y_2^2,y_2)$ and $g(s_1,s_2)$=$s_1-s_2^2$.}
\centering
\label{table 24}
\begin{tabular}{|c|c|c|c|} 
\hline
& \multicolumn{3}{c|}{Average length}\\
&\multicolumn{3}{c|}{(Standard deviation)}\\
 \hline
\backslashbox{Estimator and \\sampling  design \\based on which CI\\ is constructed}{Sample size}
&  $n$=$75$ & $n$=$100$ & $n$=$125$  \\ 
 \hline
\multirow{2}{*}{$g(\hat{\overline{h}}_{H})$, SRSWOR}& $1498664$& $1588740$& $2418155$\\
&$(3236118)$&$(2694726)$&$(3205532)$\\
\multirow{2}{*}{$g(\hat{\overline{h}}_{PEML})$, SRSWOR}& $1035032$& $1077345$& $1002397$\\
&$(1472036)$&$(1376947)$&$(1573834)$\\
\multirow{2}{*}{$g(\hat{\overline{h}}_{H})$, RS}& $887813.9$& $764055.6$& $684218.5$\\
&$(464853)$&$(377760)$&$(298552)$\\
\multirow{2}{*}{$g(\hat{\overline{h}}_{PEML})$, RS}& $1385778$& $1168689$& $1055339$\\
&$(1584677)$&$(1339377)$&$(1177054)$\\
\multirow{2}{*}{$g(\hat{\overline{h}}_{PEML})$, RHC}& $1319413$& $1134532$& $1072290$\\
&$(1473379)$&$(1384754)$&$(1472584)$\\
\hline
\end{tabular}
\end{table}

\begin{table}[h!]
\caption{Average and standard deviation of lengths of asymptotically $95\%$ CIs for correlation coefficient between $y_1$ and $y_3$. Recall from Table $4$ in Section $2$ that for correlation coefficient between $y_1$ and $y_3$, $h(y_1,y_3)$=$(y_1,y_3,y_1^2,y_3^2,y_1 y_3)$ and $g(s_1,s_2,s_3,s_4,s_5)$=$(s_5-s_1s_2)/((s_3-s_1^2)(s_4-s_2^2))^{1/2}$.}
\label{table 25}
\centering
\begin{tabular}{|c|c|c|c|} 
\hline
& \multicolumn{3}{c|}{Average length}\\
&\multicolumn{3}{c|}{(Standard deviation)}\\
 \hline
\backslashbox{Estimator and \\sampling  design \\based on which CI is constructed}{Sample size}
&  $n$=$75$ & $n$=$100$ & $n$=$125$  \\ 
 \hline
\multirow{2}{*}{$g(\hat{\overline{h}}_{H})$, SRSWOR}& $0.3682$& $0.3753$& $0.3893$\\
&$(0.1138)$&$(0.1039)$&$(0.0936)$\\
\multirow{2}{*}{$g(\hat{\overline{h}}_{PEML})$, SRSWOR}& $0.2747$& $0.2881$& $0.2884$ \\
&$(0.1095)$&$(0.1008)$&$(0.0879)$\\
\multirow{2}{*}{$g(\hat{\overline{h}}_{H})$, RS}& $0.3351$& $0.3453$& $0.3587$\\
&$(0.1652)$&$(0.0938)$&$(0.1034)$\\
\multirow{2}{*}{$g(\hat{\overline{h}}_{PEML})$, RS}& $592.48$& $260.44$& $469.36$\\
&$(0.2859)$&$(0.3441)$&$(2.738)$\\
\multirow{2}{*}{$g(\hat{\overline{h}}_{PEML})$, RHC}& $3838.4$& $2740.5$& $2238.3$\\
&$(1.2271)$&$(0.1467)$&$(0.1104)$\\
\hline
\end{tabular}
\end{table}

\begin{table}[h!]
\caption{Average and standard deviation of lengths of asymptotically $95\%$ CIs for regression coefficient of $y_1$ on $y_3$. Recall from Table $4$ in Section $2$ that for regression coefficient of $y_1$ on $y_3$, $h(y_1,y_3)$=$(y_1, y_3, y^2_3, y_1 y_3)$ and $g(s_1,s_2,s_3, s_4)$=$(s_4-s_1 s_2)/(s_3-s^2_2)$.}
\label{table 26}
\centering
\begin{tabular}{|c|c|c|c|} 
\hline
& \multicolumn{3}{c|}{Average length}\\
&\multicolumn{3}{c|}{(Standard deviation)}\\
 \hline
\backslashbox{Estimator and \\sampling  design \\based on which CI is constructed}{Sample size}
&  $n$=$75$ & $n$=$100$ & $n$=$125$  \\ 
 \hline
\multirow{2}{*}{$g(\hat{\overline{h}}_{H})$, SRSWOR}& $1.6443$& $1.781$& $1.8077$\\
&$(1.223)$&$(1.127)$&$(0.8849)$\\
\multirow{2}{*}{$g(\hat{\overline{h}}_{PEML})$, SRSWOR}& $1.3984$& $1.4239$& $1.491$\\
&$(0.8867)$&$(0.7898)$&$(0.6645)$\\
\multirow{2}{*}{$g(\hat{\overline{h}}_{H})$, RS}& $1.4072$& $1.5299$& $1.5449$\\
&$(0.6463)$&$(0.4833)$&$(0.4883)$\\
\multirow{2}{*}{$g(\hat{\overline{h}}_{PEML})$, RS}& $3240.4$& $4938.4$& $1705.3$\\
&$(4.3202)$&$(1.659)$&$(2.017)$\\
\multirow{2}{*}{$g(\hat{\overline{h}}_{PEML})$, RHC}& $50701.7$& $17291.2$& $22245.7$\\
&$(2.659)$&$(3.93)$&$(1.51)$\\
\hline
\end{tabular}
\end{table}

\begin{table}[h]
\caption{Average and standard deviation of lengths of asymptotically $95\%$ CIs for regression coefficient of $y_3$ on $y_1$. Recall from Table $4$ in Section $2$ that for regression coefficient of $y_3$ on $y_1$, $h(y_1,y_3)$=$(y_3, y_1, y^2_1, y_1 y_3)$ and $g(s_1,s_2,s_3, s_4)$=$(s_4-s_1 s_2)/(s_3-s^2_2)$.}
\label{table 27}
\centering
\begin{tabular}{|c|c|c|c|} 
\hline
& \multicolumn{3}{c|}{Average length}\\
&\multicolumn{3}{c|}{(Standard deviation)}\\
 \hline
\backslashbox{Estimator and \\sampling  design \\based on which CI is constructed}{Sample size}
&  $n$=$75$ & $n$=$100$ & $n$=$125$  \\ 
 \hline
\multirow{2}{*}{$g(\hat{\overline{h}}_{H})$, SRSWOR}& $0.1387$& $0.1449$& $0.1508$\\
&$(0.091)$&$(0.072)$&$(0.0616)$\\
\multirow{2}{*}{$g(\hat{\overline{h}}_{PEML})$, SRSWOR}& $0.1015$& $0.0994$& $0.1002$\\
&$(0.0868)$&$(0.0692)$&$(0.0593)$\\
\multirow{2}{*}{$g(\hat{\overline{h}}_{H})$, RS}& $0.1305$& $0.1379$& $0.1447$\\
&$(0.0919)$&$(0.0438)$&$(0.0357)$\\
\multirow{2}{*}{$g(\hat{\overline{h}}_{PEML})$, RS}& $113.4$& $263.23$& $78.782$\\
&$(0.1712)$&$(0.0725)$&$(0.0545)$\\
\multirow{2}{*}{$g(\hat{\overline{h}}_{PEML})$, RHC}& $798.95$& $490.91$& $286.92$\\
&$(0.6227)$&$(0.0862)$&$(0.1107)$\\
\hline
\end{tabular}
\end{table}

\begin{table}[h!]
\caption{Average and standard deviation of lengths of asymptotically $95\%$ CIs for correlation coefficient between $y_2$ and $y_4$. Recall from Table $4$ in Section $2$ that for correlation coefficient between $y_2$ and $y_4$, $h(y_2,y_4)$=$(y_2,y_4,y_2^2,y_4^2,y_2 y_4)$ and $g(s_1,s_2,s_3,s_4,s_5)$=$(s_5-s_1s_2)/((s_3-s_1^2)(s_4-s_2^2))^{1/2}$.}
\label{table 28}
\centering
\begin{tabular}{|c|c|c|c|} 
\hline
& \multicolumn{3}{c|}{Average length}\\
&\multicolumn{3}{c|}{(Standard deviation)}\\
 \hline
\backslashbox{Estimator and \\sampling  design \\based on which CI is constructed}{Sample size}
&  $n$=$75$ & $n$=$100$ & $n$=$125$  \\ 
 \hline
\multirow{2}{*}{$g(\hat{\overline{h}}_{H})$, SRSWOR}& $0.3428$& $0.359$& $0.3821$\\
&$(0.191)$&$(0.1783)$&$(0.1844)$\\
\multirow{2}{*}{$g(\hat{\overline{h}}_{PEML})$, SRSWOR}& $0.3088$& $0.3279$& $0.3537$\\
&$(0.1886)$&$(0.171)$&$(0.1773)$\\
\multirow{2}{*}{$g(\hat{\overline{h}}_{H})$, RS}& $0.2924$& $0.2926$& $0.298$\\
&$(0.1561)$&$(0.1491)$&$(0.1568)$\\
\multirow{2}{*}{$g(\hat{\overline{h}}_{PEML})$, RS}& $833.87$& $300.13$& $242.51$\\
&$(0.5226)$&$(0.4406)$&$(0.8658)$\\
\multirow{2}{*}{$g(\hat{\overline{h}}_{PEML})$, RHC}& $7593.1$& $3526.1$& $2390.9$\\
&$(0.4385)$&$(0.4869)$&$(0.2661)$\\
\hline
\end{tabular}
\end{table}

\begin{table}[h!]
\caption{Average and standard deviation of lengths of asymptotically $95\%$ CIs for regression coefficient of $y_2$ on $y_4$. Recall from Table $4$ in Section $2$ that for regression coefficient of $y_2$ on $y_4$, $h(y_2,y_4)$=$(y_2, y_4, y^2_4, y_2 y_4)$ and $g(s_1,s_2,s_3, s_4)$=$(s_4-s_1 s_2)/(s_3-s^2_2)$.}
\label{table 29}
\centering
\begin{tabular}{|c|c|c|c|} 
\hline
& \multicolumn{3}{c|}{Average length}\\
&\multicolumn{3}{c|}{(Standard deviation)}\\
 \hline
\backslashbox{Estimator and \\sampling  design \\based on which CI is constructed}{Sample size}
&  $n$=$75$ & $n$=$100$ & $n$=$125$  \\ 
 \hline
\multirow{2}{*}{$g(\hat{\overline{h}}_{H})$, SRSWOR}& $1.1188$& $1.1117$& $1.1566$\\
&$(1.251)$&$(1.061)$&$(1.171)$\\
\multirow{2}{*}{$g(\hat{\overline{h}}_{PEML})$, SRSWOR}& $0.9865$& $1.0005$& $1.0534$\\
&$(0.9935)$&$(0.8784)$&$(0.8758)$\\
\multirow{2}{*}{$g(\hat{\overline{h}}_{H})$, RS}& $0.8575$& $0.847$& $0.8427$\\
&$(0.6472)$&$(0.5219)$&$(0.4524)$\\
\multirow{2}{*}{$g(\hat{\overline{h}}_{PEML})$, RS}& $1583.8$& $1647.2$& $1533.9$\\
&$(1.733)$&$(1.822)$&$(1.302)$\\
\multirow{2}{*}{$g(\hat{\overline{h}}_{PEML})$, RHC}& $24127.4$& $10798.8$& $5076.1$\\
&$(2.05)$&$(1.468)$&$(2.385)$\\
\hline
\end{tabular}
\end{table}
\clearpage

\begin{table}[h!]
\caption{Average and standard deviation of lengths of asymptotically $95\%$ CIs for regression coefficient of $y_4$ on $y_2$. Recall from Table $4$ in Section $2$ that for regression coefficient of $y_4$ on $y_2$, $h(y_2,y_4)$=$(y_4, y_2, y^2_2, y_2 y_4)$ and $g(s_1,s_2,s_3, s_4)$=$(s_4-s_1 s_2)/(s_3-s^2_2)$.}
\label{table 30}
\centering
\begin{tabular}{|c|c|c|c|} 
\hline
& \multicolumn{3}{c|}{Average length}\\
&\multicolumn{3}{c|}{(Standard deviation)}\\
 \hline
\backslashbox{Estimator and \\sampling  design \\based on which CI is constructed}{Sample size}
&  $n$=$75$ & $n$=$100$ & $n$=$125$  \\ 
 \hline
\multirow{2}{*}{$g(\hat{\overline{h}}_{H})$, SRSWOR}& $0.1607$& $0.1727$& $0.1682$\\
&$(0.2236)$&$(0.2175)$&$(0.1744)$\\
\multirow{2}{*}{$g(\hat{\overline{h}}_{PEML})$, SRSWOR}& $0.1456$& $0.1586$& $0.1577$\\
&$(0.2018)$&$(0.1868)$&$(0.1616)$\\
\multirow{2}{*}{$g(\hat{\overline{h}}_{H})$, RS}& $0.1219$& $0.1232$& $0.1273$\\
&$(0.0798)$&$(0.0663)$&$(0.0615)$\\
\multirow{2}{*}{$g(\hat{\overline{h}}_{PEML})$, RS}& $236.81$& $108.3$& $85.466$\\
&$(0.3529)$&$(0.1879)$&$(0.3227)$\\
\multirow{2}{*}{$g(\hat{\overline{h}}_{PEML})$, RHC}& $1568.1$& $2215.1$& $659.3$\\
&$(0.4045)$&$(0.197)$&$(0.1416)$\\
\hline
\end{tabular}
\end{table}

\vskip .65cm
\noindent
Anurag Dey\\
{\it Indian Statistical Institute}
\vskip 2pt
\noindent
E-mail: deyanuragsaltlake64@gmail.com
\vskip 2pt

\noindent
Probal Chaudhuri\\
{\it Indian Statistical Institute}
\vskip 2pt
\noindent
E-mail:probalchaudhuri@gmail.com